\newtheorem{theorem}{Theorem}[section]
\newtheorem{lemma}[theorem]{Lemma}
\newtheorem{proposition}[theorem]{Proposition}
\newtheorem{remark}[theorem]{Remark}
\theoremstyle{definition}
\theoremstyle{remark}
\numberwithin{equation}{section}
\newcommand\bes{\begin{eqnarray}}
\newcommand\ees{\end{eqnarray}}
\newcommand\bess{\begin{eqnarray*}}
	\newcommand\eess{\end{eqnarray*}}
\newcommand{\lf}{\left}
\newcommand{\rr}{\right}
\newcommand{\R}{{\mathbb R}}
\newcommand{\dd}{\displaystyle}
\newcommand{\td}{\tilde}
\newcommand{\wtd}{\widetilde}
\newcommand{\wht}{\widehat}
\newcommand\yy{\infty}
\newcommand{\ol}{\overline}
\newcommand{\rd}{{\rm d}}
\newcommand{\prt}{\partial_t}
\newcommand{\prx}{\partial_x}
\newcommand{\prxx}{\partial_{xx}}
\newcommand{\sep}{\sqrt{\epsilon}}
\newcommand{\ul}{\underline}
\begin{document}
 
\date{\today}
\title[Approximation in free boundary problems]{Approximation of random diffusion equation by  nonlocal diffusion equation in free boundary\\ problems of one space dimension$^\$ $}
\author[Y. Du and W. Ni]{Yihong Du$^\dag$ and Wenjie Ni$^\dag$}

\thanks{$^\dag$School of Science and Technology, University of New England, Armidale, NSW 2351, Australia}
	%{\setcounter{footnote}{-1}{}\footnote{{\it E-mails}: }\hfill{\ }}\\ }
	\thanks{$^\$$This research was supported by the Australian Research Council.}
\maketitle

\begin{abstract} We show how the Stefan type free boundary problem with random diffusion  in one space dimension can be approximated by the corresponding free boundary problem with nonlocal diffusion. The approximation problem is a slightly modified version of the nonlocal diffusion problem with free boundaries considered in \cite{cdJFA, CQW}. The proof relies on the introduction of several auxiliary free boundary problems and constructions of delicate upper and lower solutions for these problems. As usual, the approximation is achieved by choosing the kernel function in the nonlocal diffusion term of the form $J_\epsilon(x)=\frac 1\epsilon J(\frac x\epsilon)$ for small $\epsilon>0$, where  $J(x)$ has compact support. We also give an estimate of the error term of the approximation by some positive power of $\epsilon$.

\bigskip

\noindent
{\bf Key words:} Free boundary, random diffusion, nonlocal diffusion, approximation.

\noindent
{\bf MSC2010 subject classifications:} 35K20, 35R35, 35R09.
\end{abstract}

%\newpage
\section {Introduction}

Free boundary problems of the form
	\begin{equation}\label{f2}
\begin{cases}
v_t=dv_{xx} +f(t,x,v), & t>0,\; x\in  (g(t), h(t)),\\
v(t,g(t))= v(t,h(t))=0, &  t >0,\\
g'(t)= -\mu  v_x(t,g(t)), &  t >0,\\
h'(t)= -\mu  v_x(t,h(t)), & t >0,\\
g (0)=-h_0,\; h(0)=h_0,\; v(0,x)=v_0(x), &x\in [-h_0,h_0]
\end{cases}
\end{equation}
have been widely studied in recent years, after the work  \cite{dl2010}, where a logistic type nonlinear term $f=f(v)$ was considered, and the initial function $v_0$
was assumed to satisfy $v_0\in C^2([-h_0, h_0])$, $v_0(\pm h_0)=0$ and $v_0>0$ in $(-h_0, h_0)$. For continuous initial function $v_0$ and general $f=f(t,x,v)$, 
the well-posedness of \eqref{f2}  was proved in \cite{DDL2017}. We refer to \cite{DDL-2, DGP, DLiang, DLou, DMW, DMZ, DWZ, GLZ, KMY, KY, Liang, LLS-1, LLS-2, Wang, WZZ} and the references therein for a sample of the recent works on \eqref{f2}. See also \cite{CF, FS2001, GST2001, RT1989} for some  related earlier works.

If $f\equiv 0$ in \eqref{f2}, then the problem reduces to the well known one-phase Stefan equation \cite{Crank, Robinstein}, which was proposed by Josef Stefan in 1890 to describe the melting of ice in contact with water, and was extensively studied in the past half century; see, for example, \cite{Cafa, CH, Fr, FK, K, KN}.

More recently, the following nonlocal version of \eqref{f2} was proposed and investigated in \cite{cdJFA} (see  \cite{CQW} for the case $f\equiv 0$):
\begin{equation}\label{f1}
\begin{cases}
\dd u_t=d \left[\int_{g(t)}^{h(t)}J(x-y)u(t,y)\rd y-u(t,x)\right]+f(t,x,u), & t>0,\; x\in  (g(t), h(t)),\\
u(t,g(t))= u(t,h(t))=0, & t>0,\\
\dd g'(t)= -\mu \int_{g(t)}^{h(t)}\int_{-\yy}^{g(t)}J(x-y)u(t,x) dyd x, & t>0,\\[3mm]
\dd h'(t)= \mu \int_{g(t)}^{h(t)}\int_{h(t)}^{\yy}J(x-y)u(t,x) dyd x, & t>0,\\
g (0)=-h_0,\; h(0)=h_0,\; u(0,x)=u_0(x), &x\in [-h_0,h_0],
\end{cases}
\end{equation}

In both \eqref{f2} and \eqref{f1}, $\mu$ and $h_0$ are given positive numbers, and for their respective well-posedness, the usual basic assumptions are:

\begin{itemize}
\item
The initial functions $u_0, v_0$ belong to $\mathcal I_0$, where
\[\mathcal I_0:=\{\phi\in C([-h_0,h_0]):  \phi(\pm h_0)=0,\  \phi(x)>0\ {\rm in}\ (-h_0,h_0)\};
\]
\item The function  $f:\R^+\times \R\times \R^+\to \R$ satisfies 
\begin{itemize}
	\item[{\rm ($\mathbf{f_1}$):}]  $f\in C(\R^+\times \R\times \R^+)$,\;  $f(t,x,0)\equiv 0$, $f(t,x,u)$ is locally Lipschitz in $u\in\mathbb R^+$, uniformly in $(t,x)\in \mathbb R^+\times\mathbb R$,
			\item[{\rm ($\mathbf{f_2}$):}] There exists $K>0$ such that $f(t,x,u)\leq 0$ for $u>K$ and $(t,x)\in \R^+\times \R$;
\end{itemize}
\item The kernel function $J:\R\to\R$ in \eqref{f1} is continuous, nonnegative and satisfies
\begin{itemize}
	\item[{\rm ($\mathbf{J}$):}] $J(0)>0$,   $\int_{\R} J(x)\rd x=1$, $J$ is even. 
\end{itemize}
\end{itemize}

In $(\mathbf{f_1})$, the requirement $f(t,x,0)\equiv 0$ can be relaxed to $f(t,x,0)\geq 0$. Assumption $(\mathbf {f_2})$ is a simple sufficient condition  to guarantee that the positive solution stays bounded and hence is defined for all $t>0$. For local existence it is not needed.

For logistic type $f(t,x,u)=f(u)$ (also known as Fisher-KPP type), it was shown in \cite{cdJFA} that the long-time behaviour of \eqref{f1}, similar to \eqref{f2}, is governed by a spreading-vanishing dichotomy. However, when spreading happens, it was proved in \cite{dlz} that the spreading speed of \eqref{f1} could be finite or infinite, depending on the properties of the kernel function $J$ in \eqref{f1}; this is very  different from \eqref{f2}, where the spreading speed is always finite whenever spreading happens (\!\cite{DDL-2, DGP, DLiang, dl2010, DLou, DMZ, KMY, LLS-2}).

For the corresponding fixed boundary problems of \eqref{f2} and \eqref{f1}, it is well-known (\!\cite{AMRT, CER2009, CERW2008, SX2015}) that, over any finite time interval $[0, T]$,  the unique solution $v$ of the local diffusion problem is the limit of the unique solution of the nonlocal problem as $\epsilon\to 0$, when the kernel function $J$ in the nonlocal problem is replaced by 
\[
\tilde J_\epsilon(x)=C J_\epsilon(x):=C\,\frac 1{\epsilon}\,J\left(\frac x {\epsilon}\right)
\]
with a suitable positive constant $C$, provided that $J$ has compact support,  $ f$ and the common initial function are all smooth enough. 

For example, if $J$ satisfies {\bf (J)} with supporting set contained in  $[-1,1]$, and $\tilde J_\epsilon$, $J_\epsilon$  are defined as  above with 
\begin{equation}\label{C*}
C=C_*:=\left[\frac 12 \int_{\R}J(z)z^2dz\right]^{-1}=\left[\int_0^1J(z)z^2dz\right]^{-1},
\end{equation}
 and $F(t,x,u)$ is $C^1$ in $t$, $C^3$ in $(x,u)$, and $u_0\in C^3([a,b])$, then it follows from Theorem A of \cite{SX2015} that the unique solution $u_\epsilon$ of the nonlocal diffusion problem\footnote{Note that this problem is equivalent to
 \[
\begin{cases}
u_t=\dd \int_{\R}  \frac{\tilde J_\epsilon(x-y)}{\epsilon^2}\Big[u(t,y)-u(t,x)\Big]dy+F(t,x,u),& x\in [a,b],\; t>0,\\
u=0,& x\in\R\setminus [a,b],\; t>0,\\
u(0,x)=u_0(x),& x\in [a,b].
\end{cases}
\]
 }
\[
\begin{cases}
u_t=\dd\frac {C_*}{\epsilon^2}\left[\int_a^b  J_\epsilon(x-y)u(t,y)dy-u(t,x)\right]+F(t,x,u),& x\in [a,b],\; t>0,\\
u(0,x)=u_0(x),& x\in [a,b]
\end{cases}
\]
converges to the unique solution $u$ of the corresponding random diffusion problem
\[
\begin{cases}
u_t=u_{xx}+F(t,x,u), & x\in [a,b],\; t>0,\\
u=0, & x\in\{a, b\}, \; t>0,\\
u(0,x)=u_0(x),& x\in [a,b],
\end{cases}
\]
in the following sense: For any $T\in (0, \infty)$,
\[
\lim_{\epsilon\to 0}\|u_\epsilon-u\|_{C([0,T]\times [a,b])}=0.
\]
If $F\equiv 0$ and $u_0\in C^{2+\alpha}([a,b])$, $0<\alpha<1$, then it follows from Theorem 1.1 of \cite{CER2009} that
\[
\|u_\epsilon-u\|_{C([0,T]\times [a,b])}\leq C\epsilon^\alpha
\]
for some $C>0$ and all small $\epsilon>0$.

\smallskip

In this paper, we examine whether  similar results hold between the free boundary problems \eqref{f2} and \eqref{f1}. We show that \eqref{f2} is the limiting problem of a slightly modified version of \eqref{f1}.\footnote{See Remark \ref{rm1.4} below  on the possible necessity of the variation from \eqref{f1}.} The modification occurs in the free boundary equations
\begin{equation}\label{fb}
\begin{cases}
\dd g'(t)= -\mu \int_{g(t)}^{h(t)}\int_{-\yy}^{g(t)}J(x-y)u(t,x) dyd x, \smallskip \\
\dd h'(t)= \mu \int_{g(t)}^{h(t)}\int_{h(t)}^{\yy}J(x-y)u(t,x) dyd x.
\end{cases}
\end{equation}
In \cite{cdJFA}, the equations in \eqref{fb} are obtained from the assumption that the changing population range $[g (t), h(t)]$ of the species with population density $u(t,x)$
expands at each of its end point $(x=g (t)$ and $x=h(t)$) with a rate proportional to the population flux across that end point. 

If we assume instead that these rates are proportional to the population flux across the end points of a slightly reduced region of the population range, say $[g (t)+\delta, h(t)-\delta]$ for some small $\delta>0$, then
\eqref{fb} should be changed accordingly to
\begin{equation}\label{fb1}
\begin{cases}
\dd g'(t)= -\mu \int_{g (t)+\delta}^{h(t)-\delta}\int_{-\yy}^{g (t)+\delta}J(x-y)u(t,x) dyd x, \smallskip \\
\dd h'(t)= \mu \int_{g (t)+\delta}^{h(t)-\delta}\int_{h(t)-\delta}^{\yy}J(x-y)u(t,x) dyd x.
\end{cases}
\end{equation}
So in the context of population spreading as explained in \cite{cdJFA}, the expansion of the population range governed by \eqref{fb1} is also meaningful.

The modified \eqref{f1} then has the form
\begin{equation}\label{f1'}
\begin{cases}
\dd u_t=d \int_{g(t)}^{h(t)}J(x-y)u(t,y)\rd y-du(t,x)+f(t,x,u), & t>0,\; x\in  (g(t), h(t)),\\
u(t,g(t))= u(t,h(t))=0, & t>0,\\
\dd g'(t)= -\mu \int_{g (t)+\delta}^{h(t)-\delta}\int_{-\yy}^{g (t)+\delta}J(x-y)u(t,x) dyd x,& t>0,\\[3mm]
\dd h'(t)= \mu \int_{g (t)+\delta}^{h(t)-\delta}\int_{h(t)-\delta}^{\yy}J(x-y)u(t,x) dyd x, & t>0,\\
g (0)=-h_0,\; h(0)=h_0,\; u(0,x)=u_0(x), & x\in [-h_0,h_0].
\end{cases}
\end{equation}

We are now able to describe the nonlocal approximation problem of \eqref{f2}. Suppose that
\bes\label{J-ep}
\mbox{\rm spt}(J)\subset [-1, 1], \ \  J_\epsilon(x):=\frac{1}{\epsilon}J(\frac{x}{\epsilon}),
\ees
and some extra smoothness conditions on $f$ and $v_0$ (to be specified below) are satisfied. (Here and in what follows, spt$(J)$ stands for the supporting set of $J$.)
Then we will show that the following problem, with $0<\epsilon\ll 1$, is an 
 approximation of \eqref{f2}:
\begin{equation}\label{f3}
\begin{cases}
\dd u_t=d\frac{C_*}{\epsilon^2}\!\! \lf[\int_{g (t)}^{h(t)}\hspace{-.2cm}J_\epsilon(x-y)u(t,y)\rd y-u(t,x)\rr] \!+\!f(t,x,u), & t>0, \; x\in (g (t), h(t)),\\
u(t,g (t))= u (t,h(t))=0, &  t>0,\\
\dd g'(t)= -\mu\frac{C_0}{\epsilon^{3/2}} \int_{g (t)+\sqrt{\epsilon}}^{h(t)-\sqrt\epsilon}\int_{-\yy}^{g (t)+\sqrt{\epsilon}}J_{{\epsilon}}(x-y)u(t,x)\rd y\rd x, & t >0,\\[3mm]
\dd h'(t)=\mu\frac{C_0}{\epsilon^{3/2}} \int_{g (t)+\sqrt\epsilon}^{h(t)-\sqrt{\epsilon} }\int_{h(t)-\sqrt{\epsilon}}^{\yy}J_{{\epsilon}}(x-y)u(t,x)\rd y\rd x, &  t >0,\\
-g (0)=h(0)=h_0,\;u(0,x)=v_0(x), &x\in [-h_0,h_0],
\end{cases}
\end{equation}
where $C_*$ is given by \eqref{C*} and
\begin{equation}\label{1.4}
C_0:= \displaystyle \left[{\int_{-1}^{0}\int_{-1}^{x}J(y)\rd y \rd x}\right]^{-1}= \left[{\int_{0}^{1}\int_{x}^{1}J(y)\rd y \rd x}\right]^{-1}=\left[{\int_{0}^{1}J(y)y\rd y }\right]^{-1}<C_*.
\end{equation}

Let us note that, from \eqref{J-ep} we have $J_\epsilon(x)=0$ for $|x|\geq \epsilon$, and hence, for $0<\epsilon\ll 1$,
\[
\int_{g (t)+\sqrt{\epsilon}}^{h(t)-\sqrt\epsilon}\int_{-\yy}^{g (t)+\sqrt{\epsilon}}J_{{\epsilon}}(x-y)u(t,x)\rd y\rd x
=\int_0^\epsilon\int_{-\epsilon}^0 J_\epsilon(x-y)u(t,g(t)+\sqrt \epsilon+x)dydx,
\]
\[
\int_{g (t)+\sqrt\epsilon}^{h(t)-\sqrt{\epsilon} }\int_{h(t)-\sqrt{\epsilon}}^{\yy}J_{{\epsilon}}(x-y)u(t,x)\rd y\rd x
=\int_0^{\epsilon}\int_{-\epsilon}^0 J_\epsilon(x-y)u(t, h(t)-\sqrt \epsilon-x)dydx.
\]
Therefore in \eqref{f3}, for $0<\epsilon\ll 1$, we may rewrite
\begin{equation}\label{g'-h'}
\begin{cases}
\dd g'(t)= -\mu\frac{C_0}{\epsilon^{3/2}}\int_0^\epsilon\int_{-\epsilon}^0 J_\epsilon(x-y)u(t,g(t)+\sqrt \epsilon+x)dydx,\\
\dd h'(t)=\mu\frac{C_0}{\epsilon^{3/2}}\int_0^{\epsilon}\int_{-\epsilon}^0 J_\epsilon(x-y)u(t, h(t)-\sqrt \epsilon-x)dydx.
 \end{cases}
\end{equation}

The extra smoothness conditions on $f$ and $v_0$ mentioned above are:
There exists some $\alpha\in (0,1)$ such that

\smallskip
$(\mathbf{f_3})$: \ \ \ \ \ \ \ \ \  $f\in C^{\alpha,\alpha,1}(\R^+\times \R\times \R^+)$,  
\begin{align}\label{1.2a}
v_0\in C^{2+\alpha}([-h_0,h_0]),\  v_0(\pm h_0)=0<|v_0'(\pm h_0)|,\;\ v_0(x)>0\ {\rm in}\ (-h_0,h_0).
\end{align}

We are now ready to state the main results of this paper.

\begin{theorem}\label{theorem1.1} Suppose $(\mathbf{f_1})$, $(\mathbf{f_2})$, $(\mathbf{f_3})$, $(\mathbf{J})$ and \eqref{J-ep} hold, and $v_0$ satisfies \eqref{1.2a}.   Then for every small $\epsilon>0$, problem \eqref{f3} has a unique positive solution, denoted by $(u_{\epsilon},g_\epsilon,h_\epsilon)$.  Moreover, if $(v,g,h)$ is the unique positive solution of \eqref{f2} and 	if we define $v(t,x)=0$ for  $x\in \R\setminus (g(t),h(t))$ and  $u_{\epsilon}(t,x)=0$  for  $x\in \R\setminus (g_\epsilon(t),h_\epsilon(t))$,  then, for any $T\in (0, \infty)$,
	\[ \begin{cases}
	\lim_{\epsilon\to 0}\sup_{t\in [0,T]}\|u_{\epsilon}(t,\cdot)-v(t,\cdot)\|_{L^{\yy}(\R)}=0,\\
	 \lim_{\epsilon\to 0}\|g_{\epsilon}-g\|_{L^{\yy}([0,T])}=0,\ \ \lim_{\epsilon\to 0}\|h_{\epsilon}-h\|_{L^{\yy}([0,T])}=0.
	\end{cases}
	\]
\end{theorem}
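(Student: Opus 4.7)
Before anything else, I would verify that the constants $C_*,C_0$ in \eqref{f3} produce the correct formal limit. For a smooth $\phi$ with $x$ in the interior of $(a,b)$, standard Taylor expansion yields $\tfrac{C_*}{\epsilon^2}[\int_a^b J_\epsilon(x-y)\phi(y)\rd y-\phi(x)]=\phi_{xx}(x)+O(\epsilon)$, as in \cite{CER2009,SX2015}. For the boundary velocity, when $\phi(a)=0$ and $\phi$ is smooth near $a$, substituting $x=\epsilon x'$, $y=\epsilon y'$ in \eqref{g'-h'}, expanding $\phi(a+\sqrt\epsilon+\epsilon x')=\phi_x(a)(\sqrt\epsilon+\epsilon x')+O(\epsilon)$, and using the identity
\[
\int_0^1\!\!\int_{-1}^0 J(x'-y')\rd y'\rd x'=\int_0^1\!\!\int_{x'}^1 J(z)\rd z\rd x'=1/C_0
\]
from \eqref{1.4}, the rescaled quantity converges to $-\mu\phi_x(a)+O(\sqrt\epsilon)$, and symmetrically at $b$. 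This identifies the two crucial error magnitudes, $O(\epsilon)$ in the interior and $O(\sqrt\epsilon)$ at the boundary, and justifies the scaling. Existence, uniqueness, and the a priori bound $\|u_\epsilon\|_\infty\le M:=\max\{K,\|v_0\|_\infty\}$ for each fixed $\epsilon$ follow by adapting the contraction scheme of \cite{cdJFA,CQW}; the $\sqrt\epsilon$-offset in \eqref{g'-h'} is only a cosmetic modification.

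To prove convergence I would sandwich $(u_\epsilon,g_\epsilon,h_\epsilon)$ between upper and lower solutions of \eqref{f3} close to $(v,g,h)$. Under \eqref{1.2a} and $(\mathbf{f_3})$, Schauder theory for \eqref{f2} (cf.\ \cite{DDL2017}) gives $v\in C^{(1+\alpha)/2,2+\alpha}$ up to the moving boundary, $g,h\in C^{1+\alpha/2}([0,T])$, and the Hopf bounds $v_x(t,g(t))>0>v_x(t,h(t))$ uniformly on $[0,T]$, so $v$ extends to a $C^{2+\alpha}$ function $\tilde v$ on a slightly enlarged spacetime region. My upper-barrier ansatz is
\[
\bar g_\epsilon(t)=g(t)-\epsilon^\beta(1+Kt),\ \bar h_\epsilon(t)=h(t)+\epsilon^\beta(1+Kt),\ \bar u_\epsilon=\tilde v+\epsilon^\gamma w,
\]
with $\beta<1/2$, $\gamma\in(0,\beta]$, $K$ large, and $w>0$ a smooth supersolution of $\prt w-dw_{xx}\ge Lw+1$ on a fixed enlarged domain, $L$ being the Lipschitz constant of $f$ in $u$ on $[0,M+1]$. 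By the interior expansion of the previous paragraph, the nonlocal super-solution inequality on $(\bar g_\epsilon,\bar h_\epsilon)$ reduces to $\prt\tilde v-d\tilde v_{xx}-f(t,x,\tilde v)+\epsilon^\gamma\ge O(\epsilon)$, which is zero on $[g,h]$ and handled on the thin collar of width $\epsilon^\beta$ by the $\epsilon^\gamma$ term. The boundary velocity inequality reduces to $g'(t)-\epsilon^\beta K\le-\mu\tilde v_x(t,g(t))+O(\epsilon^\beta)+O(\sqrt\epsilon)$, valid for $K$ large and $\beta<1/2$. A matching lower barrier is $\ul g_\epsilon=g+\epsilon^{\beta'}(1+K't)$, $\ul h_\epsilon=h-\epsilon^{\beta'}(1+K't)$, $\ul u_\epsilon=\tilde v-\epsilon^{\gamma'}w$ with $\gamma'\ge\beta'$, the constraint now coming from positivity $\ul u_\epsilon>0$ on $[\ul g_\epsilon,\ul h_\epsilon]$ (where $\tilde v\ge c\epsilon^{\beta'}$ by the Hopf lemma). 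Introducing one or two auxiliary free boundary problems---for instance one with motion prescribed to be $(\bar g_\epsilon,\bar h_\epsilon)$ and the nonlocal interior equation, and another with the interior profile prescribed---decouples the interior and boundary inequalities, and the comparison principle of \cite{cdJFA} adapted to \eqref{f3} then yields $\bar g_\epsilon\le g_\epsilon\le\ul g_\epsilon$, $\ul h_\epsilon\le h_\epsilon\le\bar h_\epsilon$, and $\ul u_\epsilon\le u_\epsilon\le\bar u_\epsilon$. Sending $\epsilon\to 0$ delivers the three convergence statements in the theorem.

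The main obstacle is the simultaneous calibration of the four exponents $\beta,\gamma,\beta',\gamma'$. Positivity of the upper barrier on the collar $[\bar g_\epsilon,g)$, where $\tilde v$ becomes negative, forces $\gamma\le\beta$, while positivity of the lower barrier on $[\ul g_\epsilon,\ul h_\epsilon]$, where $\tilde v$ is only of size $\epsilon^{\beta'}$, forces $\gamma'\ge\beta'$; the $O(\sqrt\epsilon)$ boundary error forces $\beta,\beta'<1/2$; and the interior super-solution inequality links $\gamma,\gamma'$ to the H\"older exponent $\alpha$ from $(\mathbf{f_3})$. Making all four conditions hold uniformly on $[0,T]$, while respecting the positivity, initial and boundary constraints at $t=0$ and on the moving boundaries, is the ``delicate'' barrier construction promised in the abstract; the auxiliary free boundary problems are there precisely to isolate the interior effects from the boundary effects so that the two kinds of estimates can be closed independently. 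The optimal choice of exponents also yields the explicit positive power of $\epsilon$ in the error estimate announced in the abstract. Once this calibration is in place, the comparison-and-limit argument is routine.
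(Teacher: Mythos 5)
Your overall architecture is the right one and matches the paper's in spirit: sandwich $(u_\epsilon,g_\epsilon,h_\epsilon)$ between nonlocal upper/lower solutions built from the (suitably perturbed) local problem, using the interior Taylor expansion with constant $C_*$ and the boundary-flux expansion with constant $C_0$ (whose two error magnitudes, $O(\epsilon^\alpha)$ interior and $O(\sqrt\epsilon)$ at the boundary, you identify correctly), plus auxiliary free boundary problems and the comparison principle. However, your concrete barrier ansatz has a genuine gap at the free-boundary inequality. The flux functional in \eqref{g'-h'} equals $\mu\,\epsilon^{-1/2}$ times a weighted average of the barrier over an $\epsilon$-neighbourhood of $\bar h_\epsilon(t)-\sqrt\epsilon$ (indeed $\frac{C_0}{\epsilon^{3/2}}\int_0^\epsilon\int_{-\epsilon}^0 J_\epsilon(x-y)\,dy\,dx=\epsilon^{-1/2}$). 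Consequently, any additive correction of size $\epsilon^\gamma$ that does not vanish at the barrier's own boundary contributes a term of order $\epsilon^{\gamma-1/2}$ to the flux. Your $w$ solves $\prt w-dw_{xx}\ge Lw+1$ and hence cannot vanish identically near the boundary (it is bounded below by a positive constant there), while positivity of $\bar u_\epsilon=\tilde v+\epsilon^\gamma w$ on the collar of width $\epsilon^\beta$ where $\tilde v\approx -|\tilde v_x|\,\epsilon^\beta(1+Kt)$ forces $\gamma\le\beta<1/2$. So the flux of $\bar u_\epsilon$ at $\bar h_\epsilon$ carries a divergent excess $O(\epsilon^{\gamma-1/2})$ that the available slack $\bar h_\epsilon'(t)-h'(t)=O(\epsilon^\beta K)$ cannot absorb; your claimed reduction to ``$-\mu\tilde v_x+O(\epsilon^\beta)+O(\sqrt\epsilon)$'' silently drops this term. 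The same tension (only reversed) afflicts the lower barrier.

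The paper resolves exactly this conflict, and this is where its construction genuinely differs from yours. First, the barrier boundaries are not hand-shifted copies of $(g,h)$ but the exact free boundaries $(g_{i\epsilon},h_{i\epsilon})$ of the perturbed Stefan problems \eqref{f4}, which satisfy the local flux condition identically with a built-in slack $\pm B_i\epsilon^{\gamma_1}$, $\gamma_1<\min\{\alpha,\tfrac12\}$. Second, the corrections added to $v_{i\epsilon}$ are either an $O(\epsilon)$ constant ($3M_1\epsilon$, whose flux contribution is $O(\epsilon^{1-1/2})=O(\sqrt\epsilon)$) or the profile $\tilde m_\epsilon$ which \emph{vanishes} at the free boundary; Lemma \ref{lem3.4} then shows the total flux error is genuinely $O(\sqrt\epsilon)$ and is absorbed by the $\epsilon^{\gamma_1}$ slack. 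A further point you do not address is the sign condition \eqref{L-tdL}: the operator in \eqref{f3} integrates only over $(\bar g_\epsilon,\bar h_\epsilon)$, and within distance $\epsilon$ of the boundary the difference from the full Taylor-expandable operator is of order $\epsilon^{-1}\|\bar u_\epsilon\|_{L^\infty(\mathrm{collar})}$; the paper controls this not by size but by arranging the correct \emph{sign} of the barrier on the exterior $\epsilon$-collar (\eqref{3.2a}--\eqref{3.3a}), so that truncation moves the inequality in the favourable direction. To repair your proof you would need to replace $\epsilon^\gamma w$ by a correction that is $o(\sqrt\epsilon)$ at, or vanishing at, the moving boundary, which in effect forces you back to the paper's two-tier construction via \eqref{f4}.
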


If we further raise the smoothness requirements on $v_0$ and $f$, namely assuming additionally
\begin{align}\label{1.3a}
v_0\in C^{3+\alpha}([-h_0,h_0]),
\end{align}

($\mathbf{f_4}$):\hspace{3.8cm}$f\in C^{1+\alpha,1+\alpha,1+\alpha}(\R^+\times \R\times \R^+)$, 
\\
then we can obtain an error estimate as follows.

\begin{theorem}\label{theorem1.2} Under the assumptions of Theorem \ref{theorem1.1}, if additionally
 $(\mathbf{f_4})$ and  \eqref{1.3a} are satisfied, then for any $T>0$ and any $\gamma\in (0,\min\{\alpha, \frac 12\})$,   there exists $0<\epsilon_*\ll 1$
		such that for every $\epsilon\in (0,\epsilon_*)$,
		\[ \begin{cases}
		\sup_{t\in[0,T]}\|u_{\epsilon}(t,\cdot)-v(t,\cdot)\|_{L^{\yy}(\R)}\leq \epsilon^{\gamma},\\
		\sup_{t\in[0,T]}|g_{\epsilon}(t)-g(t)|\leq \epsilon^{\gamma},  \ \sup_{t\in [0, T]}|h_{\epsilon}(t)-h(t)|\leq \epsilon^{\gamma}.
		\end{cases}
		\]
\end{theorem}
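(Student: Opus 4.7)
My plan is to sharpen the squeeze argument underlying Theorem~\ref{theorem1.1} by inserting explicit error terms of size $\epsilon^\gamma$ into the upper and lower solutions. Using the comparison principle for \eqref{f3} (an adaptation of the one for \eqref{f1} in \cite{cdJFA,CQW}), the theorem reduces to constructing $(\ol u_\epsilon,\ol g_\epsilon,\ol h_\epsilon)$ above and $(\ul u_\epsilon,\ul g_\epsilon,\ul h_\epsilon)$ below that deviate from $(v,g,h)$ by at most a constant times $\epsilon^\gamma$ on $[0,T]$; Theorem~\ref{theorem1.1} guarantees that the true triple $(u_\epsilon,g_\epsilon,h_\epsilon)$ is eventually sandwiched between them.

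The quantitative estimates rest on a regularity upgrade for $(v,g,h)$: under $v_0\in C^{3+\alpha}$ and $f\in C^{1+\alpha,1+\alpha,1+\alpha}$, straightening the free boundary and applying parabolic Schauder theory yield $v\in C^{(3+\alpha)/2,\,3+\alpha}$ up to the boundary, $g,h\in C^{1+(1+\alpha)/2}([0,T])$, and uniform Hopf bounds $v_x(t,g(t))>0>v_x(t,h(t))$. With these in hand, two consistency estimates drive everything. For $x\in[g(t)+\epsilon,h(t)-\epsilon]$, a fourth-order Taylor expansion combined with the evenness of $J$ and the identity \eqref{C*} gives
\[
\frac{C_*}{\epsilon^2}\Big[\int_{g(t)}^{h(t)}J_\epsilon(x-y)v(t,y)\rd y-v(t,x)\Big]=v_{xx}(t,x)+O(\epsilon^{1+\alpha}),
\]
while expanding $v(t,h(t)-\sep-x)=-v_x(t,h(t))(\sep+x)+O((\sep+x)^2)$ in the representation \eqref{g'-h'} and using the moment identity \eqref{1.4} gives
\[
\mu\frac{C_0}{\epsilon^{3/2}}\int_0^\epsilon\!\!\int_{-\epsilon}^0 J_\epsilon(x-y)\,v(t,h(t)-\sep-x)\rd y\rd x=-\mu v_x(t,h(t))+O(\sep),
\]
and similarly at the left endpoint. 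The interior diffusion error degrades to $O(\epsilon^\alpha)$ inside the boundary layer of width $\epsilon$, while the free-boundary rate error is genuinely $O(\sep)$; these two sources are exactly what force the restriction $\gamma<\min\{\alpha,\tfrac12\}$.

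The upper solution I would try has the shape
\[
\ol h_\epsilon(t):=h(t)+M\epsilon^\gamma e^{Lt},\qquad \ol g_\epsilon(t):=g(t)-M\epsilon^\gamma e^{Lt},\qquad \ol u_\epsilon(t,x):=(1+M\epsilon^\gamma e^{Lt})\,V(t,x),
\]
where $V(t,\cdot)$ is an extension of $v(t,\cdot)$ to $[\ol g_\epsilon(t),\ol h_\epsilon(t)]$ that vanishes at the new endpoints, matches $v$ and $v_x$ at $g(t),h(t)$, and is concave in the extension zone; the lower solution is built symmetrically. Plugging in and using the two consistency estimates, the PDE and free-boundary inequalities reduce to pointwise inequalities of the form
\[
M L\epsilon^\gamma e^{Lt}V(t,x)\ \ge\ C_1\epsilon^\alpha+C_2\epsilon^\gamma e^{Lt}V(t,x)+C_3\sep,
\]
which hold for all small $\epsilon$ once $L$ and $M$ are chosen large (depending on $T$ and the $C^\alpha$ data of $f,v$) precisely because $\gamma<\min\{\alpha,\tfrac12\}$.

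The main obstacle, and where the construction of $V$ becomes delicate, is the boundary layer $\{x:\mathrm{dist}(x,\{\ol g_\epsilon(t),\ol h_\epsilon(t)\})\le\epsilon\}$, in which the clean interior Taylor estimate fails because the kernel straddles the new free boundary and $V$ is itself only of size $O(\epsilon^\gamma)$. There one must simultaneously exploit that (i) the nonlocal flux is small in absolute terms since $V$ is small, and (ii) the concavity of $V$ near $\ol g_\epsilon(t),\ol h_\epsilon(t)$ makes the nonlocal operator $\frac{C_*}{\epsilon^2}[\int J_\epsilon V-V]$ nonpositive there, supplying the sign that absorbs the $O(\epsilon^\alpha)$ boundary error. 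Balancing smoothness of $V$ (needed for the four-term Taylor expansion in the interior) against the required concavity in the boundary strip, while matching $v,v_x$ at the original boundary, is the technical heart of the argument; everything else is bookkeeping and the final comparison-principle closure on $[0,T]$.
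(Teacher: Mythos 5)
Your overall architecture (comparison principle plus barriers within $O(\epsilon^{\gamma})$ of $(v,g,h)$, with the two consistency estimates and the resulting restriction $\gamma<\min\{\alpha,\tfrac12\}$) matches the spirit of the paper, but the concrete barrier construction has a genuine flaw, and it is exactly the point where the paper is forced into its two-step detour through the perturbed local problems \eqref{f4}. First, your upper profile $V$ is required to satisfy mutually incompatible conditions: matching $v$ and $v_x$ at $h(t)$ gives $V(t,h(t))=0$ and $V_x(t,h(t))=v_x(t,h(t))<0$ (Hopf), while you also demand $V(t,\ol h_\epsilon(t))=0$ with $V$ concave on $[h(t),\ol h_\epsilon(t)]$; concavity with zero endpoint values forces $V\ge 0$ on that interval, contradicting the negative slope at $h(t)$. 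Dropping concavity forces $V<0$ inside $(\ol g_\epsilon(t),\ol h_\epsilon(t))$, which destroys both the claimed sign of the nonlocal operator and the intended ordering $\ol u_\epsilon\ge u_\epsilon$.

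Second, and more seriously, the "symmetric" lower solution $\ul u_\epsilon=(1-M\epsilon^{\gamma}e^{Lt})v$ on the shrunk domain $(\ul g_\epsilon(t),\ul h_\epsilon(t))$ with $\ul h_\epsilon(t)=h(t)-M\epsilon^{\gamma}e^{Lt}$ is strictly positive of size $\sim\epsilon^{\gamma}$ at and beyond its own right endpoint (since $\epsilon\ll\epsilon^{\gamma}$). The nonlocal operator in \eqref{f3} integrates only over $(\ul g_\epsilon(t),\ul h_\epsilon(t))$, so for $x$ within $\epsilon$ of $\ul h_\epsilon(t)$ the truncation removes a mass $\frac{C_*}{\epsilon^2}\int_{\ul h_\epsilon(t)}^{x+\epsilon}J_\epsilon(x-y)\,\ul u_\epsilon(t,y)\,dy\sim\epsilon^{\gamma-2}$, which makes $d\mathcal L_\epsilon[\ul u_\epsilon]$ hugely negative there; the required inequality $\partial_t\ul u_\epsilon\le d\mathcal L_\epsilon[\ul u_\epsilon]+f$ then fails by an unbounded margin. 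No choice of $L,M$ repairs this. The paper circumvents precisely this obstruction by forcing its barriers to vanish exactly at their own free boundaries with extensions of the \emph{favorable} sign (see \eqref{3.2a}, \eqref{3.3a} and \eqref{L-tdL}), which is why it must take the barriers to be genuine solutions $(v_{i\epsilon},g_{i\epsilon},h_{i\epsilon})$ of the perturbed Stefan problems \eqref{f4} (corrected by $\tilde m_\epsilon$ or a constant), and then separately prove, via the time-rescaled upper solution $(V_{1\epsilon},G_{1\epsilon},H_{1\epsilon})$ of Proposition \ref{lemma3.4}, that $v_{1\epsilon}-v_{2\epsilon}$, $g_{1\epsilon}-g_{2\epsilon}$, $h_{1\epsilon}-h_{2\epsilon}$ are $O(\epsilon^{\gamma_1})$. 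Your proposal has no substitute for either of these two ingredients, so as written it does not close.
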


\begin{remark}\label{rm1.3} {\rm Theorem \ref{theorem1.1} still holds if in \eqref{f3}, the free boundary conditions are changed to, for an arbitrary $\beta\in (0,1)$,
\[\begin{cases}
\dd g'(t)= -\mu\frac{C_0}{\epsilon^{1+\beta}} \int_{g (t)+\epsilon^\beta}^{h(t)-\epsilon^\beta}\int_{-\yy}^{g (t)+\epsilon^\beta}J_{{\epsilon}}(x-y)u(t,x)\rd y\rd x, & t >0,\\[3mm]
\dd h'(t)=\mu\frac{C_0}{\epsilon^{1+\beta}} \int_{g (t)+\epsilon^\beta}^{h(t)-\epsilon^\beta }\int_{h(t)-\epsilon^\beta}^{\yy}J_{{\epsilon}}(x-y)u(t,x)\rd y\rd x, &  t >0,
\end{cases}
\]
or equivalently, in \eqref{g'-h'} the equations are changed to
\begin{equation}\label{g'-h'-beta}
\begin{cases}
\dd g'(t)= -\mu\frac{C_0}{\epsilon^{1+\beta}}\int_0^\epsilon\int_{-\epsilon}^0 J_\epsilon(x-y)u(t,g(t)+\epsilon^\beta+x)dydx,\\
\dd h'(t)=\mu\frac{C_0}{\epsilon^{1+\beta}}\int_0^{\epsilon}\int_{-\epsilon}^0 J_\epsilon(x-y)u(t, h(t)-\epsilon^\beta-x)dydx.
 \end{cases}
\end{equation}
In such a case, Theorem \ref{theorem1.2} still holds if $\gamma\in (0, \min\{\alpha, 1-\beta\})$. Only minor changes are needed in the proofs; for example, for such a case, in \eqref{f4}, $\gamma_1$ should belong to $(\gamma, \min\{\alpha,1-\beta\})$.}
\end{remark}

\begin{remark} \label{rm1.4}{\rm 
We believe that the modification of \eqref{f1} to \eqref{f1'} is necessary in order to obtain an approximation problem of \eqref{f2} such as  \eqref{f3}. Some analysis leading us to this conjecture is given in Section 5.}
\end{remark}

The rest of the paper is organised as follows. In Section 2, we collect some preparatory results for the proof of the main results, and also explain the strategy of the proof (near the end of the section). Sections 3 and 4 consist of  the proofs of Theorems \ref{theorem1.1} and \ref{theorem1.2}, respectively,  based on the construction of delicate upper and lower solutions, following the strategy set in Section 2.
In Section 5, we discuss the conjecture in Remark \ref{rm1.4} through some detailed calculations.

\section{Preparations}
In this section, we prepare some results to be used in the proof of Theorems \ref{theorem1.1} and \ref{theorem1.2} in  Sections 3 and 4. These preparatory results can be proved by simple variations of existing methods and techniques.

\begin{theorem}\label{theorem2.4} 
Suppose $(\mathbf J)$, $(\mathbf{f_1})$ and $(\mathbf{f_2})$ hold,  $u_0\in\mathcal I_0$ and $0\leq \delta\ll h_0$. Then
problem \eqref{f1'}  has  a unique positive solution defined for all $t> 0$. In particular, for $0<\epsilon\ll 1$,  problem \eqref{f3} admits a unique positive solution  $(u_\epsilon, g_\epsilon,h_\epsilon)$ defined for  all $t> 0$.  
\end{theorem}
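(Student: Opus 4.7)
The statement is a mild extension of the well-posedness theorems for \eqref{f1} in \cite{cdJFA} (and \cite{CQW} for $f\equiv 0$) to the modified problem \eqref{f1'}; the only new feature is that the free-boundary integrals are now taken over the reduced region $[g(t)+\delta,h(t)-\delta]$ rather than $[g(t),h(t)]$. My plan is to follow the contraction-mapping blueprint of those papers: first build a local-in-time solution by a Banach fixed-point argument on a class of admissible boundary trajectories, then extend it globally in time using the $L^\infty$ bound on $u$ guaranteed by $(\mathbf{f_2})$.

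For the local step, fix a small $T>0$ and set
\[
\mathcal D_T=\bigl\{(g,h)\in C([0,T])^2:\; g(0)=-h_0,\; h(0)=h_0,\; g\text{ nonincreasing},\; h\text{ nondecreasing},\; -g(t),\,h(t)\le h_0+M_0 t\bigr\},
\]
where $M_0$ depends on $\mu$ and $K^*:=\max\{\|u_0\|_\infty,K\}$. Given $(g,h)\in\mathcal D_T$, the first two equations of \eqref{f1'} form a nonlocal semilinear equation on the moving domain $\{g(t)<x<h(t)\}$, and admit a unique nonnegative continuous solution $u=u^{(g,h)}$ by the integral-equation/Banach fixed-point argument of \cite[\S 2]{cdJFA}; comparison with the constant super-solution $K^*$, using $(\mathbf{f_2})$ together with the fact that $\int_{g(t)}^{h(t)}J(x-y)\,dy\le 1$, gives $\|u\|_\infty\le K^*$. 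Plugging $u$ into the right-hand sides of \eqref{fb1} and integrating in $t$ defines an update map $\Gamma(g,h)=(\widetilde g,\widetilde h)$. Because $0\le u\le K^*$ and $\int_{\mathbb R}J=1$, direct estimates show $\Gamma$ maps $\mathcal D_T$ into itself for a suitable $M_0$, and is a contraction in $C([0,T])^2$ once $T$ is small enough. The Lipschitz dependence of $u^{(g,h)}$ on $(g,h)$ needed here is the one already proved in \cite{cdJFA}, and the extra dependence of \eqref{fb1} on $(g,h)$ via the shifted endpoints $g+\delta$ and $h-\delta$ contributes only a Lipschitz term because $J$ and $u$ are bounded and $u$ is continuous up to the moving boundaries.

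To extend to all $t>0$, observe that $\|u(t,\cdot)\|_\infty\le K^*$ holds uniformly in $T$, so the Lipschitz constants of $g$ and $h$ are also $T$-independent; the local construction can therefore be iterated indefinitely, provided the reduced interval $[g(t)+\delta,h(t)-\delta]$ never degenerates. This is automatic: the integrands in \eqref{fb1} are nonnegative, hence $g$ is nonincreasing and $h$ nondecreasing, so $h(t)-g(t)\ge 2h_0\gg 2\delta$ for every $t\ge 0$. The statement for \eqref{f3} then follows at once by the substitutions $d\mapsto dC_*/\epsilon^2$, $\mu\mapsto \mu C_0/\epsilon^{3/2}$, $J\mapsto J_\epsilon$ and $\delta=\sqrt\epsilon$, which satisfies $\delta\ll h_0$ for $0<\epsilon\ll 1$.

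The main obstacle I anticipate is not the contraction argument itself---which is essentially a transcription of \cite{cdJFA}---but the bookkeeping required to confirm that $u^{(g,h)}$ has enough continuity up to the lateral boundaries $x=g(t)+\delta$ and $x=h(t)-\delta$ for the integrals in \eqref{fb1} to depend continuously (and Lipschitz-continuously in $t$) on $(g,h)$. This continuity, however, is immediate from the integral representation used to construct $u^{(g,h)}$, so no new idea beyond those of \cite{cdJFA} should be required.
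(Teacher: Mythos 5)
Your proposal is correct and follows essentially the same route as the paper: the paper's proof simply invokes the contraction-mapping construction of \cite[Theorem 2.1]{cdJFA} and notes that replacing the free-boundary equations by \eqref{fb1} requires only minor and obvious changes, which is precisely the argument you have spelled out (including the observation that monotonicity of $g$ and $h$ keeps $[g(t)+\delta,h(t)-\delta]$ nondegenerate for the global extension).
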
 

\begin{proof}
  In \cite[Theorem 2.1]{cdJFA},  existence and uniqueness for problem \eqref{f1} is proved by using the contraction mapping theorem several times.  If the third and fourth equations of \eqref{f1} are replaced by  \eqref{fb1}, the proof in \cite{cdJFA} can be carried over with only minor and obvious changes. We leave the details to the interested reader.
\end{proof}

\begin{theorem}\label{lemma2.1} Suppose  $(\mathbf{f_1})$ and $(\mathbf{f_2})$ hold, and $u_0\in\mathcal I_0$.
	
		{\rm (i)} Assume that $T\in (0,\yy)$, $0\leq \delta\ll h_0$, and the kernel function $J$ satisfies  $(\mathbf{J})$.   If $(\ol u,\ol g,\ol h)\in C(\ol D)\times  C([0,T]) \times C([0,T]) $ with $D=\{(t,x):t\in (0,T],\; x\in (\ol g(t),\ol h(t))\}$ satisfies 
		\begin{equation}
		\begin{cases}
		\dd \ol u_t\geq d \int_{\ol g(t)}^{\ol h(t)} J(x-y)\ol u(t,y)\rd y-d\ol u(t,x)+f(t,x,\ol u), &t\in (0,T],\; x\in  (\ol g(t), \ol h(t)),\\
		\ol u(t,\ol g(t))\geq 0, \ol u(t,\ol h(t))\geq 0, & t \in (0, T],\\
		\dd \ol g'(t)\leq  -\mu \int_{\ol g(t)+\delta}^{\ol h(t)-\delta}\int_{-\yy}^{\ol g(t)+\delta}J(x-y)\bar u(t,x)\rd y\rd x, & t \in (0, T],\\[3mm]
		\dd \ol h'(t)\geq  \mu \int_{\ol g(t)+\delta}^{\ol h(t)-\delta}\int_{\ol h(t)-\delta}^{\yy}J(x-y)\bar u(t,x)\rd y\rd x, & t \in (0, T],\\
		\ol u(0,x)\geq u_0(x), &x\in [-h_0,h_0]\subset [\ol g(0),\ol h(0)],
		\end{cases}
		\end{equation}
	then 
		\begin{align*}
				& \ol g(t,x)\leq g(t,x),\ \ h(t,x)\leq \ol h(t,x)&&{\rm for}\ t\in [0, T],\\
		&u(t,x)\leq \ol u(t,x)&& {\rm for}\ t\in (0,T],\; x\in  [\ol g(t), \ol h(t)],
		\end{align*}
where $(u,g,h)$ is the unique positive solution of \eqref{f1'}. 
	
	{\rm (ii)} 	Assume  $T\in (0,\yy)$, $\bar g, \bar h, P_1,P_2\in C^1([0,T])$, and  $\ol  v\in  C^{1,2}(\ol D)$ satisfies
		\begin{equation}\label{2.2}
	\begin{cases}
	\ol v_t\geq d\ol v_{xx} +f(t,x,\ol v), \ \ \ \ &t\in (0,T],\; x\in  (\ol g(t), \ol h(t)),\\
	\ol v(t,\ol g(t))= 0,   \ol v(t,\ol h(t))= 0, &t \in (0, T],\\
	\ol g'(t)\leq  -\mu  \ol v_x(t,\ol g(t))+P_1(t), &t \in (0, T],\\
	\ol h'(t)\geq  -\mu  \ol v_x(t,\ol h(t))+P_2(t), &t \in (0, T],\\
	\ol v(0,x)\geq v_0(x), &x\in [-h_0,h_0]\subset [\ol g(0),\ol h(0)].
	\end{cases}
	\end{equation}
If  $(v,g,h)\in C^{1,2}(\ol\Omega)\times C^1([0,T])\times C^1([0,T])$ with  $\Omega=\{(t,x): t\in (0,T],\; x\in (g(t),h(t))\}$ satisfies \eqref{2.2} with all the inequalities replaced by equalities,  then
			\begin{align*}
	&\ol g(t)\leq  g(t), \ \ h(t)\leq \bar h(t)&& {\rm for}\ t\in (0,T],\\
		&v(t,x)\leq \ol v(t,x)&& {\rm for}\ (t,x)\in [0, T]\times [\ol g(t), \ol h(t)].
			\end{align*}
\end{theorem}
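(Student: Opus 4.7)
Both parts follow the standard template for comparison principles in Stefan-type free boundary problems (cf.\ \cite{dl2010, cdJFA}): perturb the supersolution's initial data to obtain strict separation from $(u,g,h)$, let $t^*$ be the first time the outer boundaries touch the inner ones, use a (nonlocal or parabolic) maximum principle to conclude $\ol u \geq u$ (resp.\ $\ol v \geq v$) on the smaller domain throughout $[0,t^*]$, then derive a contradiction from the boundary equations at $t^*$. The initial perturbation, e.g.\ replacing $(\ol u(0,\cdot),\ol g(0),\ol h(0))$ by $(\ol u(0,\cdot)+\rho,\ol g(0)-\rho,\ol h(0)+\rho)$ and sending $\rho\to 0$ afterwards by continuous dependence on the data (Theorem \ref{theorem2.4}), ensures that the inequalities obtained at $t^*$ can be made strict.

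For part (i), on $[0,t^*]$ the domain of $u$ sits inside that of $\ol u$, so $w:=\ol u-u$ is well-defined on the smaller domain and satisfies a nonlocal linear inequality of the form
\[
w_t \geq -c(t,x)\,w + d\int_{g(t)}^{h(t)} J(x-y)\,w(t,y)\,\rd y + r(t,x),
\]
with $c$ bounded (from the local Lipschitz property in $(\mathbf{f_1})$), $r\geq 0$ (arising from the extra integration of the nonnegative $\ol u$ over $[\ol g(t),g(t))\cup(h(t),\ol h(t)]$ in the equation for $\ol u$), and $w\geq 0$ both at $t=0$ and at the lateral boundaries $x=g(t), h(t)$. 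A standard Gronwall/iteration argument then yields $w\geq 0$. If $t^*\leq T$, say $\ol h(t^*)=h(t^*)$; the definition of $t^*$ forces $h'(t^*)\geq \ol h'(t^*)$, while $\ol u\geq u\geq 0$ on $[g(t^*)+\delta,h(t^*)-\delta]\subset [\ol g(t^*)+\delta,\ol h(t^*)-\delta]$ combined with the hypothesised inequalities on $\ol h'(t)$ and the equation for $h'(t)$ yields the reverse strict inequality (strict thanks to the perturbation), a contradiction; the symmetric argument handles $\ol g(t^*)=g(t^*)$.

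For part (ii), the classical parabolic maximum principle replaces the nonlocal one: on $[0,t^*]\times[g(t),h(t)]$ the function $w:=\ol v-v$ satisfies a linear parabolic inequality with nonnegative data (using $\ol v\geq 0$ throughout $[\ol g(t),\ol h(t)]$, which follows from $\ol v$ vanishing on its own lateral boundary, the assumption $f(t,x,0)\equiv 0$, and the minimum principle), so $w\geq 0$. At $t^*$, assuming $\ol h(t^*)=h(t^*)$, the Hopf boundary-point lemma forces $\ol v_x(t^*,\ol h(t^*))<v_x(t^*,h(t^*))$, and hence
\[
\ol h'(t^*) \geq -\mu\,\ol v_x(t^*,\ol h(t^*)) + P_2(t^*) > -\mu\, v_x(t^*,h(t^*)) + P_2(t^*) = h'(t^*) + P_2(t^*),
\]
which contradicts $\ol h'(t^*)\leq h'(t^*)$ whenever $P_2(t^*)\geq 0$ (symmetrically for the left boundary with $P_1(t^*)\leq 0$). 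The only real obstacle is bookkeeping: in (i) one must track the $\delta$-shift in \eqref{fb1} through the inclusion of integration domains (valid because $\ol g\leq g$ and $h\leq \ol h$ on $[0,t^*]$), and in (ii) one notes that the theorem will be applied only in contexts where $P_1, P_2$ carry the signs that make the final contradiction go through.
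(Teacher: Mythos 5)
Your skeleton (first touching time $t^*$, maximum principle on the common domain, boundary-flux contradiction at $t^*$) is exactly the argument behind the results the paper cites for this theorem — \cite[Lemma 3.1]{cdJFA} for part (i) and \cite[Lemma 3.5]{dl2010} for part (ii) — so the route is the right one. But there is a genuine error in your endgame for part (ii). You conclude that the contradiction requires $P_2(t^*)\geq 0$ (and $P_1(t^*)\leq 0$), and you then declare that the theorem is only meant to be used under such sign restrictions. The theorem carries no such hypothesis, and it cannot: the triple $(v,g,h)$ it compares against is a solution of \eqref{2.2} \emph{with the $P_i$ terms included}, i.e.\ $h'(t)=-\mu v_x(t,h(t))+P_2(t)$. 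Hence at the touching time the Hopf lemma gives
\[
\ol h'(t^*)\;\geq\;-\mu\,\ol v_x(t^*,h(t^*))+P_2(t^*)\;>\;-\mu\, v_x(t^*,h(t^*))+P_2(t^*)\;=\;h'(t^*),
\]
the $P_2(t^*)$ cancels, and the contradiction with $\ol h'(t^*)\leq h'(t^*)$ is unconditional. Your last equality reads $-\mu v_x+P_2=h'+P_2$, which silently compares against the solution of \eqref{f2} rather than of the perturbed problem; this is precisely why the paper can say that ``the extra terms $P_1$ and $P_2$ do not affect the argument.'' As written, your proof establishes a strictly weaker statement, and the missing sign pattern is in fact violated in the paper's own applications (for $i=2$ in \eqref{f4} one has $P_1>0$, $P_2<0$).

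A second, more technical problem is your strictness device. You propose to perturb only the supersolution's \emph{initial data} $(\ol u(0,\cdot),\ol g(0),\ol h(0))$ and recover the conclusion by letting $\rho\to0$. A given function $\ol u$ cannot have its trace at $t=0$ altered in isolation: to remain a supersolution the whole triple must be modified for all $t$, and it is not clear that $\ol u+\rho$ (say) still satisfies the differential inequality, since both $f(t,x,\cdot)$ and the nonlocal term change. The workable version — the one used in the cited lemmas — perturbs the \emph{solution} side instead: compare $(\ol u,\ol g,\ol h)$ with the solution of \eqref{f1'} (resp.\ \eqref{f2}) whose data are shrunk, e.g.\ $u_0$ and $\mu$ replaced by $(1-\varepsilon)u_0$ and $(1-\varepsilon)\mu$, obtain strict separation of the fronts for every $\varepsilon>0$, and then let $\varepsilon\to0$ using continuous dependence of the solution on its data (which is a property of solutions, not of arbitrary supersolutions). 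With these two repairs your argument coincides with the paper's intended proof.
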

\begin{proof}
For  conclusion (i), if $\delta=0$, then it  follows directly from \cite[Lemma 3.1]{cdJFA}. When   $\delta>0$,  one can similarly prove it since the proof of \cite[Lemma 3.1]{cdJFA} is not affected. 

 The comparison principle in part (ii)  can be proved by following the proof of  \cite[Lemma 3.5]{dl2010}, because the extra terms $P_1$ and $P_2$ in the inequalities for ${\ol g}\,'$ and $\ol h\,'$ do not affect the argument there. 
\end{proof}

The  triple $(\ol u,\ol g,\ol h)$ may be called an upper solution. We can define a lower solution by reversing all the  inequality signs and obtain analogous comparison results. 

\medskip
 
Next, for $i=1,2$, $T_0>0$ { and 
	\begin{align}\label{2.3}
	\gamma_1\in (\gamma,\min\{\alpha, \frac 12\})
	\end{align}
	 with $\gamma$ given by Theorem \ref{theorem1.2}}, we consider the following perturbation problems of \eqref{f2}, 
\begin{equation}\label{f4}
\begin{cases}
\prt v_i=d\prxx v_i +f(t,x,v_i)+A_i\epsilon^{\gamma_1}, & t\in (0,T_0],\; x\in ( g_i(t),  h_i(t)),\\
v_i(t,g_i(t))= v_i(t,h_i(t))=0, & t\in (0, T_0],\\
g_i'(t)= -\mu  \prx v_i(t,g_i(t))-B_i\epsilon^{\gamma_1}, & t\in (0, T_0],\\
h_i'(t)= -\mu  \prx v_i(t,h_i(t))+B_i\epsilon^{\gamma_1}, & t\in (0, T_0],\\
 v_i(0,x)= v_0(x), &x\in [-h_0,h_0],
\end{cases}
\end{equation}
where $A_1=B_1=1$,  $A_2=0$ and $B_2=-2$.   We require $A_i\geq 0$ to guarantee the solution is nonnegative and well-defined. Solutions of \eqref{f4} will be used to construct upper and lower solutions of \eqref{f3}. We have the following results.

\begin{theorem}\label{theorem2.2}
{\rm (i)} Suppose $(\mathbf{f_1})$, $(\mathbf{f_2})$ and $(\mathbf{f_3})$ hold, and $v_0$ satisfies \eqref{1.2a}.  Then for any $T_0\in (0,+\infty)$, there exists $\epsilon_0>0$ small depending on $T_0$ such that for any  $\epsilon\in [0,\epsilon_0]$,   problem \eqref{f4} has a unique positive solution $(v_{i\epsilon}, g_{i\epsilon},h_{i\epsilon})$. Moreover, there exists $M_1>0$ depending on  $T_0$, $u_0$ and $\alpha\in (0, 1)$,
such that, for every $\epsilon\in [0,\epsilon_0]$ and $i=1,2$,
\begin{align}\label{2.4a}
||v_{i\epsilon}||_{C^{1+\alpha/2,{2+\alpha}}(\ol \Omega_{i\epsilon})}, \ \  ||g_{i\epsilon}||_{C^{1+\alpha/2}([0,T_0])},||h_{i\epsilon}||_{C^{1+\alpha/2}([0,T_0])}\leq M_1,
\end{align}
where $\Omega_{i\epsilon}:=\{(t,x):t\in (0,T_0],\; x\in ( g_{i\epsilon}(t), h_{i\epsilon}(t))\}.$

{\rm (ii)} If in addition,  $(\mathbf{f_4})$ and  \eqref{1.3a} are satisfied,  then  there exists $M_2>0$ depending on $T_0$ and $u_0$ such that,
 for $ \epsilon\in [0,\epsilon_0]$ and $ i=1,2$,
\begin{align}\label{2.6}
||v_{i\epsilon}||_{C^{\frac{3+\alpha}{2},3+\alpha}(\ol \Omega_{i\epsilon})}, \  ||g_{i\epsilon}||_{C^{2+\alpha/2}([0,T_0])},||h_{i\epsilon}||_{C^{2+\alpha/2}([0,T_0])}\leq M_2.
\end{align}

\end{theorem}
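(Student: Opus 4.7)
The plan is to view \eqref{f4} as a small regular perturbation of the classical one-phase Stefan problem \eqref{f2} (which corresponds to $\epsilon=0$), and to adapt the standard well-posedness theory of \cite{dl2010, DDL2017} with only cosmetic changes. The extra terms $A_i\epsilon^{\gamma_1}$ in the PDE and $\mp B_i\epsilon^{\gamma_1}$ in the free boundary conditions are constants in $(t,x)$, uniformly bounded by $(|A_i|+|B_i|)\epsilon_0^{\gamma_1}$ for $\epsilon\in[0,\epsilon_0]$, so they play the role of small inhomogeneous data and do not interact with the leading-order nonlinear structure.

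For existence and uniqueness, I would first straighten the free boundary by a change of variables mapping $[g_i(t),h_i(t)]$ onto $[-h_0,h_0]$, converting \eqref{f4} into a quasilinear parabolic equation on a fixed interval coupled with two ODEs for $g_i,h_i$ driven by $v_{i,x}$ at the endpoints. Local well-posedness in $C^{1+\alpha/2,2+\alpha}\times C^{1+\alpha/2}\times C^{1+\alpha/2}$ then follows by the Banach contraction argument of \cite[Theorem 2.1]{dl2010}; the perturbation constants pass through without affecting the contraction, because they cancel in the difference estimates. Global continuation up to $[0,T_0]$ follows from the a priori bounds described next.

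For the uniform estimate \eqref{2.4a}, the comparison principle Theorem \ref{lemma2.1}(ii) together with $(\mathbf{f_2})$ gives $\|v_{i\epsilon}\|_\infty\leq \max\{K,\|v_0\|_\infty\}+A_i\epsilon^{\gamma_1}T_0$, which is uniformly bounded for $\epsilon\in[0,\epsilon_0]$. Hopf's lemma, combined with the non-degeneracy $|v_0'(\pm h_0)|>0$ in \eqref{1.2a}, yields a uniform positive lower bound on $|v_{i\epsilon,x}|$ at the free boundary, which in turn produces a uniform lower bound on $h_{i\epsilon}(t)-g_{i\epsilon}(t)$ and uniform two-sided bounds on $g_{i\epsilon}'$, $h_{i\epsilon}'$. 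Consequently the coefficients of the straightened equation stay uniformly parabolic with $C^{\alpha/2,\alpha}$ bounds independent of $\epsilon$, and parabolic Schauder estimates up to the boundary yield a uniform $C^{1+\alpha/2,2+\alpha}$ bound on $v_{i\epsilon}$ depending only on $T_0$, $\|v_0\|_{C^{2+\alpha}}$, the local Hölder norms of $f$, and the constants $d,\mu$; the free-boundary ODEs then upgrade $g_{i\epsilon},h_{i\epsilon}$ to $C^{1+\alpha/2}$.

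For part (ii) I would bootstrap once. Under $(\mathbf{f_4})$ and \eqref{1.3a}, the first-order compatibility at $t=0$ is satisfied (i.e.\ $v_0''(\pm h_0)$ matches the value of $\partial_t v$ prescribed by the equation after accounting for the free-boundary motion), so differentiating the straightened equation in $t$ and applying Schauder estimates to $\partial_t v_{i\epsilon}$ yields $v_{i\epsilon}\in C^{(3+\alpha)/2,3+\alpha}$, whence $g_{i\epsilon}',h_{i\epsilon}'\in C^{1+\alpha/2}$ and \eqref{2.6} follows. The main technical obstacle throughout is producing estimates that are genuinely uniform in $\epsilon\in[0,\epsilon_0]$; this is handled by shrinking $\epsilon_0$ so that the Hopf non-degeneracy at the free boundaries survives the perturbation, after which all constants in the Schauder estimates can be taken independent of $\epsilon$.
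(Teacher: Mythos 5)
Your proposal follows essentially the same route as the paper: straighten the free boundary, run the contraction-mapping argument of \cite{dl2010} treating the $O(\epsilon^{\gamma_1})$ terms as harmless inhomogeneities, continue the solution to $[0,T_0]$ (shrinking $\epsilon_0$ so that $h_{2\epsilon}-g_{2\epsilon}$ stays bounded below), and obtain \eqref{2.4a} and \eqref{2.6} via Schauder estimates and a one-step bootstrap, with all constants uniform in $\epsilon\in[0,\epsilon_0]$. The only differences are cosmetic: the paper routes the local theory through $L^p$ estimates plus Sobolev embedding (to repair a known issue in the embedding step of \cite{dl2010}), and for part (ii) it applies the higher-order Schauder estimate directly to the straightened equation after checking that its coefficients and right-hand side lie in $C^{(1+\alpha)/2,1+\alpha}$, rather than differentiating in $t$.
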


Let us note that if $\epsilon=0$, then \eqref{f4} reduces to \eqref{f2}. The relationship between \eqref{f2} and \eqref{f4} with $\epsilon>0$ is given in the following result.

\begin{theorem}\label{theorem2.3}
Under the assumptions of Theorem \ref{theorem2.2} part {\rm (i)}, the unique solution $(v,g,h)$ of \eqref{f2} satisfies, for $\epsilon\in (0,\epsilon_0]$,
\begin{equation}\label{2.5}
\begin{cases}
[g_{2\epsilon}(t), h_{2\epsilon}(t)]\subset [g(t),h(t)]\subset [g_{1\epsilon}(t), h_{1\epsilon}(t)],\ &t\in [0,T_0],\\
v_{2\epsilon}(t,x)\leq v(t,x)\leq v_{1\epsilon}(t,x),\ &t\in [0,T_0],\; x\in  \R,
\end{cases}
\end{equation}
where we have assumed $v(t,x)=0$  for  $x\in \R\setminus (g(t),h(t))$  and $v_{i\epsilon}(t,x)=0$ for  $x\in \R\setminus (g_{i\epsilon}(t),h_{i\epsilon}(t))$. Moreover,  
\begin{equation}\label{2.6a}
\begin{cases}
&\dd \lim_{\epsilon\to 0}\sup_{t\in [0,T_0]}||v_{i\epsilon}(t,\cdot)-v(t,\cdot)||_{L^{\yy}(\R)}=0,\\
&\dd\lim_{\epsilon\to 0}||g_{i\epsilon}-g||_{L^{\yy}([0,T_0])}=0,\ \ \lim_{\epsilon\to 0}||h_{i\epsilon}-h||_{L^{\yy}([0,T_0])}=0.
\end{cases}
\end{equation}
\end{theorem}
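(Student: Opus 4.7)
The plan is to establish \eqref{2.5} and \eqref{2.6a} in turn, using \eqref{f4} to sandwich $(v,g,h)$ between a perturbed upper and lower solution and then passing to the limit by a compactness argument.

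\textbf{Step 1: The sandwich \eqref{2.5}.} I would check directly from \eqref{f4} that $(v_{1\epsilon},g_{1\epsilon},h_{1\epsilon})$ is a classical upper solution of \eqref{f2} in the sense of Theorem \ref{lemma2.1}(ii) with $P_1\equiv P_2\equiv 0$. Indeed, with $A_1=B_1=1$, the source term $+\epsilon^{\gamma_1}\geq 0$ in the PDE gives $\prt v_{1\epsilon}\geq d\prxx v_{1\epsilon}+f(t,x,v_{1\epsilon})$, while the boundary equations yield $g_{1\epsilon}'=-\mu\prx v_{1\epsilon}(t,g_{1\epsilon})-\epsilon^{\gamma_1}\leq -\mu\prx v_{1\epsilon}(t,g_{1\epsilon})$ and $h_{1\epsilon}'=-\mu\prx v_{1\epsilon}(t,h_{1\epsilon})+\epsilon^{\gamma_1}\geq -\mu\prx v_{1\epsilon}(t,h_{1\epsilon})$, so both free boundaries move outward at least as fast as in the unperturbed motion. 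Since $v_{1\epsilon}(0,\cdot)=v_0$ on $[-h_0,h_0]=[g_{1\epsilon}(0),h_{1\epsilon}(0)]$, Theorem \ref{lemma2.1}(ii) gives $g_{1\epsilon}(t)\leq g(t)$, $h(t)\leq h_{1\epsilon}(t)$ on $[0,T_0]$ and $v\leq v_{1\epsilon}$ on $[0,T_0]\times[g(t),h(t)]$. After extending $v\equiv 0$ outside $[g,h]$ and $v_{1\epsilon}\equiv 0$ outside $[g_{1\epsilon},h_{1\epsilon}]$, the inequality $v\leq v_{1\epsilon}$ holds on all of $\R$. An entirely analogous verification, now using $A_2=0$, $B_2=-2$ and the lower-solution counterpart of Theorem \ref{lemma2.1}(ii) (mentioned immediately after its proof), gives $[g_{2\epsilon},h_{2\epsilon}]\subset[g,h]$ and $v_{2\epsilon}\leq v$ on $\R$. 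This proves \eqref{2.5}.

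\textbf{Step 2: Compactness for \eqref{2.6a}.} By \eqref{2.4a}, for each $i\in\{1,2\}$ the families $\{g_{i\epsilon}\}_{\epsilon\in(0,\epsilon_0]}$ and $\{h_{i\epsilon}\}_{\epsilon\in(0,\epsilon_0]}$ are bounded in $C^{1+\alpha/2}([0,T_0])$. Given any sequence $\epsilon_n\downarrow 0$, Arzel\`a--Ascoli extracts a subsequence (still denoted $\epsilon_n$) along which $g_{i\epsilon_n}\to\wtd g$ and $h_{i\epsilon_n}\to\wtd h$ in $C^1([0,T_0])$. To handle the moving domains $\Omega_{i\epsilon_n}$, I would straighten each one via the change of variables $y=2(x-g_{i\epsilon}(t))/(h_{i\epsilon}(t)-g_{i\epsilon}(t))-1$ mapping onto $[-1,1]$; the uniform $C^{1+\alpha/2,2+\alpha}$ bound on $v_{i\epsilon}$ in \eqref{2.4a}, combined with the uniform $C^{1+\alpha/2}$ bound on the boundaries, produces an $\epsilon$-uniform Schauder bound on the pulled-back function. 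A further diagonal extraction gives $C^{1,2}$-convergence on the fixed cylinder $[0,T_0]\times[-1,1]$, and pushing forward yields $v_{i\epsilon_n}\to \wtd v$ in $C^{1,2}_{\mathrm{loc}}$ on $\{(t,x):t\in[0,T_0],\,x\in(\wtd g(t),\wtd h(t))\}$.

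\textbf{Step 3: Identification of the limit.} Passing to the limit in \eqref{f4} along $\epsilon_n$, the perturbations $A_i\epsilon_n^{\gamma_1}$ and $B_i\epsilon_n^{\gamma_1}$ vanish, and the $C^1$-convergence of $g_{i\epsilon_n},h_{i\epsilon_n}$ together with the $C^{1,2}_{\mathrm{loc}}$-convergence of $v_{i\epsilon_n}$ allows passage to the limit in the boundary equations; hence $(\wtd v,\wtd g,\wtd h)$ solves \eqref{f2} with initial data $v_0$ and $\pm h_0$. Uniqueness for \eqref{f2} (the $\epsilon=0$ instance of Theorem \ref{theorem2.2}) forces $(\wtd v,\wtd g,\wtd h)=(v,g,h)$. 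Since every subsequence of $\{\epsilon_n\}$ has a sub-subsequence along which the same limit is reached, the full family converges, yielding the $L^\infty$ limits of $g_{i\epsilon}$ and $h_{i\epsilon}$ in \eqref{2.6a}. The $L^\infty(\R)$-statement for $v_{i\epsilon}$ then follows from the uniform convergence of the boundaries together with the spatial Lipschitz bound on $v_{i\epsilon}$ inherited from \eqref{2.4a} and its vanishing at $g_{i\epsilon},h_{i\epsilon}$.

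\textbf{Main obstacle.} The delicate point is ensuring that the Schauder bounds for the pulled-back functions are genuinely $\epsilon$-uniform: this requires checking that the straightened diffusion equation has coefficients with $\epsilon$-uniform $C^{\alpha/2,\alpha}$ control (which follows from the $C^{1+\alpha/2}$ bound on the free boundaries in \eqref{2.4a}), and that $C^{1,2}_{\mathrm{loc}}$-convergence on the interior of the varying domain can be upgraded to $L^\infty(\R)$-convergence of the zero-extended functions. The sandwich in Step 1 and the limit identification in Step 3 are otherwise routine applications of Theorem \ref{lemma2.1}(ii) and of uniqueness for \eqref{f2}.
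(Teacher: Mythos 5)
Your proposal is correct and follows essentially the same route as the paper, which disposes of \eqref{2.5} by the comparison principle of Theorem \ref{lemma2.1}(ii) and of \eqref{2.6a} by continuous dependence of \eqref{f4} on the parameter $\epsilon$. Your Steps 2--3 simply realize that continuous dependence explicitly, via the uniform bounds \eqref{2.4a}, Arzel\`a--Ascoli, and uniqueness for \eqref{f2}, which is the standard way to justify the paper's one-line assertion.
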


{\bf Proof of Theorem \ref{theorem2.2}:}\ 
This follows from simple variations of existing techniques. So unless necessary, we will be brief and leave the details to the interested reader.
 
 Let the assumptions in part (i) be satisfied.
 For the local existence and uniqueness result to \eqref{f4}, we will follow the proof  of  \cite[Theorem 2.1]{dl2010} with some minor modifications. Since the use of the Sobolev embedding theorem there requires some corrections, we provide the necessary details in this part of the proof. An alternative correction can be found in \cite{Wang2}.

 Firstly, as in \cite{dl2010}, for small $t>0$ we straighten the free boundary of \eqref{f4} by the transformation $(t, y)\to (t,x)$, where
 \[
 x=\Gamma(t,y):=y +\zeta(y)[h(t)-h_0]+\zeta(-y)[g(t)+h_0],\; y\in\R,
\]
 where $(g (t), h(t))$ stands for $(g_i(t), h_i(t))$ with $i=1$ or 2, and $\zeta\in C^\infty(\R)$ satisfies
 \[
 \zeta(y)=1 \mbox{ if } |y-h_0|<\frac {h_0} 4,\; \zeta(y)=0 \mbox{ if } |y-h_0|>\frac{h_0} 2,\; |\zeta'(y)|<\frac{6}{h_0} \mbox{ in } \R.
 \]
 
 Then for small $t>0$, say $t\in [0, S]$ such that $|g (t)-h_0|, |h(t)-h_0|\leq \frac {h_0}{16}$, the interval $[g (t), h(t)]$ in the $x$-axis is changed to $[-h_0, h_0]$ in the $y$-axis, and with 
 \[
 w(t,y):=v_i(t,\Gamma(t,y)),
 \]
 problem  \eqref{f4} for $t\in [0, S]$ is changed to 
\begin{equation}\label{f4'}
\begin{cases}
\prt w=\tilde A(t,y) w_{yy}\!+\!\tilde B(t,y)w_y\!+\!f(t,\Gamma(t,y),w)\!+\!A_i\epsilon^{\gamma_1},\! & t\in (0,S],\; y\in ( -h_0,  h_0),\\
w(t,-h_0)= w(t,h_0)=0, & t\in (0, S],\\
g'(t)= -\mu w_y(t,-h_0) -B_i\epsilon^{\gamma_1}, & t\in (0, S],\\
h'(t)= -\mu w_y(t, h_0) +B_i\epsilon^{\gamma_1}, & t\in (0, S],\\
 w(0,y)= v_0(y), &y\in [-h_0,h_0],
\end{cases}
\end{equation}
 where 
 \[
 \begin{cases}
 \tilde A(t,y)=A(g (t), h(t), y), \\
 \tilde B(t,y)=B(g (t), h(t), y)+g'(t)C_1(g (t), h(t), y)+h'(t)C_2(g (t), h(t), y),
 \end{cases}
 \]
  and $A(\xi,\eta, y)$, $B(\xi,\eta, y)$, $C_1(\xi,\eta, y)$, $C_2(\xi,\eta, y)$
 are $C^\infty$ functions in $ [h_0-\frac{h_0}{16}, h_0+\frac{h_0}{16} ]^2\times \R$, with
 $d/4\leq A(\xi,\eta, y)\leq 16d$ in this range.
 
 Denote 
 \[
 g_1:=-\mu v_0'(-h_0),\; h_1:=-\mu v_0'(h_0), \; S:=\frac{h_0}{16(1+|g_1|+h_1)},
 \]
 and for $T\in (0, S]$ define $\Delta_T:=[0,T]\times [-h_0, h_0]$,
 \[\begin{cases}
 \mathcal D_T:=\{w\in C(\Delta_T): w(0,y)=u_0(y),\; \|w-u_0\|_{C(\Delta_T)}\leq 1\},\\
 \mathcal D_{1T}:=\{g\in C^1([0,T]): g (0)=-h_0,\; g'(0)=g_1,\; \|g'-g_1\|_{C([0,T])}\leq 1\},
 \\
 \mathcal D_{2T}:=\{h\in C^1([0,T]): h(0)=h_0,\; h'(0)=h_1,\; \|h'-h_1\|_{C([0,T])}\leq 1\}.
 \end{cases}
 \]
 It is easily seen that
 $
 \mathcal D_T:=\mathcal D_T\times \mathcal D_{1T}\times \mathcal D_{2T}$ is a complete metric space with the metric
 \[
 d((w, g,h), (\tilde w,\tilde g, \tilde h)):=\|w-\tilde w\|_{C(\Delta_T)}+\|g'-\tilde g'\|_{C([0,T])}+\|h'-\tilde h'\|_{C([0,T])}.
 \]
 
 Given $(w,g,h)\in \mathcal D_T$ with $T\in (0, S]$,  we  extend $(w,g,h)$ to $t> T$ by defining
 \[
 (w(t,y), g (t), h(t))=(w(T,y), g (T), h(T)) \mbox{ for } t>T, \ y\in [-h_0, h_0],
 \]
 and  we extend the associated $\tilde A(t,y)$ and $\tilde B(t,y)$ similarly. For simplicity the extended functions are 
 still denoted by themselves.
 
 Fix $T\in (0, S]$ and for $(w,g,h)\in \mathcal D_T$ we consider the following initial boundary value problem
 \begin{equation}\label{ibvp}
\begin{cases}
\prt \ol w-\tilde A(t,y) \ol w_{yy}\!-\!\tilde B(t,y)\ol w_y\!=\!f(t,\Gamma(t,y),w)\!+\!A_i\epsilon^{\gamma_1},\! & t\in (0,S],\; y\in ( -h_0,  h_0),\\
w(t,-h_0)= w(t,h_0)=0, & t\in (0, S],\\
 w(0,y)= v_0(y), &y\in [-h_0,h_0],
\end{cases}
\end{equation}
 where the above extensions of $(w,g,h)$ and $\tilde A,\, \tilde B$ are assumed for $t>T$.
 
 We note that the modulus of continuity of $\tilde A$, and the $L^\infty$ bound of $\tilde B$ are independent of the choice of $g$ and $h$ above, and $d/4\leq \tilde A\leq 16d$ always holds. Hence,
 by standard $L^p$ theory \eqref{ibvp} has a unique solution $\ol w\in W^{1,2}_p(\Delta_S)$ for any $p>1$, and there exists $C_1>0$ depending only on  $p, \Delta_S$, $\|v_0\|_{C^2([-h_0, h_0])}$, $f$ and $A_i\epsilon^{\gamma_1}$,  such that
 \[
 \|\ol w\|_{W^{1,2}_p(\Delta_S)}\leq C_1.
 \]
 
 By the Sobolev imbedding theorem, for any $\sigma\in (0,1)$, there exists $K_1>0$ and $p>1$ depending on $\sigma$, $h_0$ and $S$ such that
 \[
 \|\ol w\|_{C^{(1+\sigma)/2, 1+\sigma}(\Delta_S)}\leq K_1\|\ol w\|_{W^{1,2}_p(\Delta_S)}\leq C_2:=K_1C_1.
 \]
 It follows that 
\[
 \|\ol w\|_{C^{(1+\sigma)/2, 1+\sigma}(\Delta_T)}\leq C_2.
 \]
 
 Define, for $t\in [0, S]$,
 \begin{equation}\label{h-g}
 \begin{cases}
 \dd\ol h(t)=h_0-\int_0^t\mu\ol w_y(\tau,h_0)d\tau+B_i\epsilon^{\gamma_1} t,
 \\
\dd \ol g (t)=-h_0-\int_0^t\mu\ol w_y(\tau,-h_0)d\tau-B_i\epsilon^{\gamma_1} t. 
\end{cases}
\end{equation}
Then clearly
\[
\|\ol g '\|_{C^{\sigma/2}([0,S])},\; \|\ol h '\|_{C^{\sigma/2}([0,S])}\leq C_3:=\mu C_2+B_i\epsilon^{\gamma_1}.
\]
We now define  $\tilde{\mathcal F}: \mathcal D_T\to C(\Delta_S)\times C^1([0,S])$ and $\mathcal F: \mathcal D_T\to C(\Delta_T)\times C^1([0,T])$ by
\[
\tilde{\mathcal F}(w,g,h)=(\ol w, \ol g,\ol h),\ \ \ \ {\mathcal F}(w,g,h)=(\ol w, \ol g,\ol h)|_{\{t\in [0,T]\}}.
\]
Then the same reasoning as in \cite{dl2010} shows that $\mathcal F$ maps $\mathcal D_T$ into itself if $T\leq S_0\in (0, S)$ for some
$S_0=S_0(C_2,C_3,\sigma)$ small enough.
  
  We show next that by shrinking $T$ further if necessary, $\mathcal F: \mathcal D_T\to\mathcal D_T$ is a contraction mapping.
  Let $(w_j,g_j,h_j)\in \mathcal D_T$ for $j=1,2$ and denote $(\ol w_j,\ol g_j,\ol h_j)=\tilde {\mathcal F}(w_j,g_j, h_j)$. We assume that 
  $(g_j, h_j)$ are extended to $t>T$ as before. We denote the associated $\tilde A(t,y)$ and $\tilde B(t,y)$ by $\tilde A_j(t,y)$ and $\tilde B_j(t,y)$
  and assume that they are also extended to $t>T$ as before. 
  
  Setting $U=\ol w_1-\ol w_2$, we obtain
  \[
  \begin{cases}
  U_t\!-\!\tilde A_2U_{yy}\!-\!\tilde B_2 U_y=&\!\!\!\!(\tilde A_1-\tilde A_2)(\ol w_1)_{yy}+(\tilde B_1-\tilde B_2)(\ol w_1)_{y}\\
  &+f(t,\Gamma_1(t,y), \ol w_1)-f(t,\Gamma_2(t, y),\ol w_2),\ \    t\in (0, S], \; y\in (-h_0, h_0),\\
  U(t,\pm h_0)=0,& t\in [0, S],\\
  U(0,y)=0, & y\in [-h_0, h_0].
    \end{cases}
    \]
  Applying the $L^p$ estimate we obtain, for some $C_4$ depending only on 
  $\Delta_S$ and $p>1$,
  \[
  \|U\|_{W^{1,2}_p(\Delta_S)}\leq C_4(\|w_1-w_2\|_{C(\Delta_T)}+\|g_1-g_2\|_{C^1([0,T])}+\|h_1-h_2\|_{C^1([0,T])}),
  \]
  since the right hand side of the first equation for $U$ above has its $L^p(\Delta_S)$ norm controlled by $\|w_1-w_2\|_{C(\Delta_T)}+\|g_1-g_2\|_{C^1([0,T])}+\|h_1-h_2\|_{C^1([0,T])}$
  due to our extension of the functions, and  the required conditions on $\tilde A_2,\ \tilde B_2$ for the $L^p$ estimate are not affected by the choice of $g_2$ and $h_2$, similar to the situation  for \eqref{ibvp},
  
  We may now use the Sobolev embedding theorem to deduce, as before
  \[
  \|U\|_{C^{(1+\sigma)/2, 1+\sigma}(\Delta_S)}\leq C_5(\|w_1-w_2\|_{C(\Delta_T)}+\|g_1-g_2\|_{C^1([0,T])}+\|h_1-h_2\|_{C^1([0,T])}),
  \]
  with $C_5$ depending only on  $\Delta_S$, $\sigma$ and $C_4$. Therefore, for every $T\in (0, S_0]$,
  \[
  \|\ol w_1-\ol w_2\|_{C^{(1+\sigma)/2, 1+\sigma}(\Delta_T)}\leq C_5(\|w_1-w_2\|_{C(\Delta_T)}+\|g_1-g_2\|_{C^1([0,T])}+\|h_1-h_2\|_{C^1([0,T])}).
  \]
  Using this estimate, we can follow the  argument in \cite{dl2010} to deduce that there exists $S_1\in(0, S_0]$ depending on $C_5$, $\sigma$ and $\mu$ such that $\mathcal F$ is a contraction mapping on $\mathcal D_T$ for any $T\in (0, S_1]$. This guarantees a unique fixed point of $\mathcal F$, which is a positive solution of \eqref{f4} for $t\in (0, S_1]$. 
  
 As in \cite{dl2010}, for any given $T_0>0$, by repeating the above process finitely many times, the positive solution of \eqref{f4} can be extended to $t\in [0, T_0]$, except that
for the case $i=2$, some further explanation is needed, since the extra term $2\epsilon^{\alpha/2}$ in the equations of $h'(t)$ and $g'(t)$
 may cause $h(t)-g(t)$ to decrease in $t$, and the above process requires this quantity to be bounded from below by $h_0$. However, it is easy to show that this lower bound can be guaranteed  over a finite time interval $[0, T_0]$ if $\epsilon>0$ is small enough, say $\epsilon\in (0, \epsilon_0]$.

 We now consider the estimates in \eqref{2.4a}.
 Since the solution over $t\in [0, T_0]$ can be obtained by repeating the local existence proof finitely many times, it is enough to see how the estimates can be obtained over $t\in [0, S_1]$ in the above arguments. With the regularity for $w$, $g$ and $h$ obtained above through the use of $L^p$ theory and Sobolev embedding theorem, the estimates for $g$ and $h$ in \eqref{2.4a} already hold. Moreover, in view of the  assumptions ${\bf (f_3)}$ and \eqref{1.2a}, we see that all the conditions are satisfied to apply the Schauder estimate to \eqref{ibvp} to obtain a $C^{1+\alpha/2, 2+\alpha}$ bound for $w$, which yields a  $C^{1+\alpha/2, 2+\alpha}$ bound for $v$.
 This proves \eqref{2.4a}, and the proof of part (i) is complete.
 
 It remains to prove \eqref{2.6} in part (ii).  We first take $\sigma\in (0, 1)$ in the above arguments so that $\sigma\geq (1+\alpha)/2$. Then
  $w\in C^{(1+\alpha)/2, 1+\alpha}$ and $g,h\in C^{1+\sigma}$  by using the $L^p$ theory and Sobolev embedding theorem in part (i).
 From these facts,  ${\bf (f_4)}$ and \eqref{1.3a}, we see that $\tilde A,\tilde B\in  C^{(1+\alpha)/2, 1+\alpha}$, and the right hand side of the first equation in \eqref{ibvp} belongs to $C^{(1+\alpha)/2, 1+\alpha}$. Hence we can apply the Schauder estimate to \eqref{ibvp} to obtain a $C^{(3+\alpha)/2, 3+\alpha}$ bound for $w$, which yields a $C^{2+\alpha/2}$ bound for $g$ and $h$ through \eqref{h-g}, and then a $C^{(3+\alpha)/2, 3+\alpha}$ bound for $v$. This proves \eqref{2.6}.
 $\hfill \Box$

\medskip

  {\bf Proof of Theorem \ref{theorem2.3}:}\ The validity of \eqref{2.6a} follows from the continuous dependence of the solution of \eqref{f4} with respect to the parameter $\epsilon\in [0,\epsilon_0]$, and \eqref{2.5} follows from the comparison principle. $\hfill \Box$
  
  \begin{remark}\label{rm2.5} {\rm The convergences in \eqref{2.6a} actually hold under stronger norms. For example, combining \eqref{2.6a} with  \eqref{2.4a},
 we immediately see that, for $i=1,2$, 
 \begin{equation*}\label{2.6b}
\dd\lim_{\epsilon\to 0}||g_{i\epsilon}-g||_{C^1([0,T_0])}=0,\ \ \lim_{\epsilon\to 0}||h_{i\epsilon}-h||_{C^1([0,T_0])}=0.
\end{equation*}
 Since $g'(t)<0<h'(t)$ for $t\geq 0$ (here the assumption $v'_0(\pm h_0)\not=0$ is used), the above identities imply that,  for $i=1,2$, \begin{equation}\label{g-h-mono}
 g'_{i\epsilon}(t)<0<h_{i\epsilon}'(t) \mbox{ for all $t\in [0, T_0]$ and all small $\epsilon>0$.}
 \end{equation}
 }
\end{remark}  

\medskip

{\bf Strategy:} We are now in a position to briefly describe the strategy of the proof of the main results. In the next section, we will modify $v_{i\epsilon}$ to obtain $v^*_{i\epsilon}= v_{i\epsilon}+O(\epsilon^{\gamma_1})$ for $i=1,2$ and $0<\epsilon\ll1$ such that
$(v^*_{1\epsilon}, g_{1\epsilon}, h_{1\epsilon})$ is an upper solution of \eqref{f3} and $(v^*_{2\epsilon}, g_{2\epsilon}, h_{2\epsilon})$ is a lower solution of \eqref{f3}. Hence the unique solution $(u_\epsilon, g_\epsilon, h_\epsilon)$ of \eqref{f3} satisfies
\[
v^*_{1\epsilon}\geq v_\epsilon\geq v^*_{2\epsilon},\; g_{1\epsilon}\leq g_{\epsilon}\leq g_{2\epsilon},\;
h_{1\epsilon}\geq h_{\epsilon}\geq h_{2\epsilon}.
 \]
 From these inequalities and \eqref{2.6a}, we immediately obtain the desired conclusions in Theorem \ref{theorem1.1}.
 
 The proof of Theorem \ref{theorem1.2} follows the same strategy, but with $(v_{1\epsilon}, g_{1\epsilon}, h_{1\epsilon})$
 replaced by an upper solution $(V_{1\epsilon}, G_{1\epsilon}, H_{1\epsilon})$ of \eqref{f4} with $i=1$, obtained by modifying the solution $(v_{2\epsilon}, g_{2\epsilon}, h_{2\epsilon})$ of \eqref{f4} with $i=2$, so that $|V_{1\epsilon}-v_{2\epsilon}|+|G_{1\epsilon}-g_{2\epsilon}|+|H_{1\epsilon}-h_{2\epsilon}|$ is bounded by  { $C\epsilon^{\gamma_1}$ for some $C>0$}.

{\bf Notations:} We conclude this section by observing that  ($\mathbf{f_1}$) implies, for any $k>0$ there exists $L_0=L_0(k)>0$ such that
\begin{align}\label{1.2}
|f(t,x,u_1)-f(t,x,u_2)|\leq L_0|u_1-u_2|\ \ \ \ {\rm for}\ u_1,u_2\in [0,k], (t,x)\in \R^+\times \R.
\end{align}
And  ($\mathbf{f_3}$) implies, for any $k>0$ there exists $L=L(k)>0$ such that
\bes\label{1.5}
|f(t_1,x_1,u_1)-f(t_1,x_1,u_2)|\leq L(|t_1-t_2|+|x_1-x_2|+|u_1-u_2|)
\ees
for $u_1,u_2\in [0,k]$, $t_1,t_2\in \R^+$ and $x_1,x_2\in \R$.

\section{Proof of Theorem \ref{theorem1.1}}
Throughout this section we assume that  $(\mathbf{f_1})$, $(\mathbf{f_2})$, $(\mathbf{f_3})$, $(\mathbf{J})$, \eqref{J-ep} and  \eqref{1.2a} hold.  Then, from Section 2, we know that for any $T_0\in (0,+\infty)$, there exists $\epsilon_0>0$ small depending on $T_0$ such that for every  $\epsilon\in (0,\epsilon_0]$ and $i=1,2$,  problem \eqref{f4} has a unique positive solution $(v_{i\epsilon}, g_{i\epsilon},h_{i\epsilon})$.

For $0<\epsilon\ll 1$, let $(u_\epsilon, g_\epsilon, h_\epsilon)$ be the unique solution of \eqref{f3}, which we know from Section 2 is defined for all $t>0$. 

Define 
\begin{equation}\label{m}
m_\epsilon(t,x)=m_\epsilon(t,x; g_{2\epsilon}, h_{2\epsilon}):=\begin{cases}
\dd  1-\left[\frac{x}{g_{2\epsilon}(t)}\right]^2, &t\in [0,T_0],\; x\in [g_{2\epsilon}(t), 0],\\[3mm]
\dd  1-\left[\frac{x}{h_{2\epsilon}(t)}\right]^2, &t\in [0,T_0],\; x\in  [0, h_{2\epsilon}(t)].
\end{cases}
\end{equation}
Clearly $m_\epsilon$ is a $C^1$ function and $0\leq m_\epsilon(t,x)\leq 1$. Moreover, $\partial_x m_\epsilon(t,x)$ is Liptschitz continuous in $x$. 

As we will see below, Theorem \ref{theorem1.1} follows easily from the next result.

\begin{proposition}\label{lemma3.1} There exist positive constants $\tilde M_1$ and $\epsilon_1\in (0,\epsilon_0]$ such that
for any $\epsilon\in (0,\epsilon_1]$, we have
	\begin{equation}\label{ineq3.2}
\begin{cases}
[g_{2\epsilon}(t), h_{2\epsilon}(t)]\subset [g_\epsilon(t),h_\epsilon(t)]\subset [g_{1\epsilon}(t), h_{1\epsilon}(t)],\ &t\in [0,T_0],\\
u_\epsilon(t,x )\geq  v_{2\epsilon}(t,x)-\tilde M_1\epsilon^{\gamma_1},\ &t\in [0,T_0],\; x\in  [g_{2\epsilon}(t),h_{2\epsilon}(t)],\\
u_\epsilon(t,x )\leq v_{1\epsilon}(t,x)+3M_1\epsilon,\ &t\in [0,T_0],\; x\in  [g_\epsilon(t),h_\epsilon(t)],
\end{cases}
\end{equation}
where $M_1$ is given in \eqref{2.4a}.
\end{proposition}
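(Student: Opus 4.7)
The plan is to implement the strategy sketched at the end of Section 2. For $i=1,2$ I modify the solution $(v_{i\epsilon}, g_{i\epsilon}, h_{i\epsilon})$ of \eqref{f4} into an upper solution (when $i=1$) or a lower solution (when $i=2$) of \eqref{f3}, and then invoke the comparison principle Theorem \ref{lemma2.1}(i). The modifier will be of the form
\[
v^*_{i\epsilon}(t,x) := v_{i\epsilon}(t,x) + c_i \epsilon^{\gamma_1}\phi_{i\epsilon}(t,x),
\]
where $\phi_{i\epsilon}$ is either the bump $m_\epsilon$ of \eqref{m} (to preserve the zero Dirichlet condition at $x=g_{i\epsilon}(t), h_{i\epsilon}(t)$) or a simple constant perturbation, with $c_1>0$ and $c_2<0$ to be chosen.

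The central computation is to compare the nonlocal and local diffusion operators. Writing $v_{i\epsilon}(t,y) = v_{i\epsilon}(t,x) + (y-x)(v_{i\epsilon})_x(t,x) + \tfrac12(y-x)^2 (v_{i\epsilon})_{xx}(t,x) + R$, using that $J$ is even with unit mass, and applying the normalization \eqref{C*}, I obtain, at every $x$ whose distance to $\{g_{i\epsilon}(t), h_{i\epsilon}(t)\}$ exceeds $\epsilon$,
\[
\frac{C_*}{\epsilon^2}\left[\int_{g_{i\epsilon}(t)}^{h_{i\epsilon}(t)} J_\epsilon(x-y) v_{i\epsilon}(t,y)\rd y - v_{i\epsilon}(t,x)\right] = (v_{i\epsilon})_{xx}(t,x) + O(\epsilon^\alpha),
\]
with the error controlled by the $C^{2+\alpha}$ bound from \eqref{2.4a}. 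In the $\epsilon$-neighbourhood of the free boundary the range of integration is truncated, so a separate scaled expansion is needed there; it produces an $O(\epsilon^\alpha)$ boundary-layer defect plus a term proportional to the boundary derivative $(v_{i\epsilon})_x(t, h_{i\epsilon}(t))$, both of which the combination $c_i\epsilon^{\gamma_1}\phi_{i\epsilon}$ and the uniform source slack $A_i\epsilon^{\gamma_1}$ built into \eqref{f4} are designed to absorb---recall from \eqref{2.3} that $\gamma_1<\alpha$, so $\epsilon^{\gamma_1}$ dominates $\epsilon^\alpha$ for small $\epsilon$.

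For the free-boundary inequalities I use the equivalent form \eqref{g'-h'}, rescale by $x=\epsilon x'$, $y=\epsilon y'$, and Taylor-expand $v_{i\epsilon}(t, h_{i\epsilon}(t)-\sep-x)$ around $(t,h_{i\epsilon}(t))$ using $v_{i\epsilon}(t,h_{i\epsilon}(t))=0$. The identity \eqref{1.4} then yields
\[
\mu\frac{C_0}{\epsilon^{3/2}}\int_0^\epsilon\!\!\int_{-\epsilon}^0 J_\epsilon(x-y) v_{i\epsilon}(t, h_{i\epsilon}(t)-\sep-x)\rd y\rd x = -\mu(v_{i\epsilon})_x(t, h_{i\epsilon}(t)) + O(\sep),
\]
which is exactly the local flux up to an $O(\sep)$ error. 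Because $\gamma_1 < 1/2$, this error is dominated by the slack $\pm B_i\epsilon^{\gamma_1}$ present in \eqref{f4}, so the free-boundary inequalities for the modified triple go the right way with room to spare. The analogous argument at $x = g_{i\epsilon}(t)$ is identical.

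The hardest step will be coordinating two boundary-layer defects at once: the one in the interior nonlocal operator near $g_{i\epsilon}(t), h_{i\epsilon}(t)$, and the one in the nonlocal flux integral that samples $v_{i\epsilon}$ at points $\sep$ inside the boundary. The modifier $\phi_{i\epsilon}$ and the constants $c_i$ must be tuned so that both defects are absorbed simultaneously without disturbing the initial condition $v^*_{i\epsilon}(0,\cdot) = v_0$ or the Dirichlet condition $v^*_{i\epsilon}=0$ at $g_{i\epsilon}, h_{i\epsilon}$. Once the upper and lower solutions are verified, Theorem \ref{lemma2.1}(i) immediately delivers the enclosure \eqref{ineq3.2}, with $\tilde M_1$ arising from $|c_2|$ and the $3M_1\epsilon$ bound emerging from the $C^1$ estimate \eqref{2.4a} when the upper comparison is transferred from $[g_{1\epsilon}, h_{1\epsilon}]$ to the (possibly slightly smaller) domain $[g_\epsilon, h_\epsilon]$.
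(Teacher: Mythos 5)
Your overall strategy is the paper's: perturb the solutions $(v_{i\epsilon},g_{i\epsilon},h_{i\epsilon})$ of \eqref{f4} into an upper ($i=1$) and a lower ($i=2$) solution of \eqref{f3} and invoke Theorem \ref{lemma2.1}(i), and your treatment of the free-boundary flux (Taylor expansion around $h_{i\epsilon}(t)$ yielding $-\mu\,\partial_x v_{i\epsilon}(t,h_{i\epsilon}(t))+O(\sqrt\epsilon)$, absorbed by the slack $B_i\epsilon^{\gamma_1}$ since $\gamma_1<\tfrac12$) is exactly what Lemmas \ref{lem3.4}--\ref{lem3.5} do. But there is a genuine gap in how you propose to handle the nonlocal diffusion operator near the free boundary. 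For $x$ within distance $\epsilon$ of, say, $h_{i\epsilon}(t)$, the truncation of the integral to $[g_{i\epsilon}(t),h_{i\epsilon}(t)]$ removes a piece
\[
\frac{C_*}{\epsilon^2}\int_{h_{i\epsilon}(t)}^{x+\epsilon}J_\epsilon(x-y)\,v(t,y)\,\rd y ,
\]
and since $|v(t,y)|\sim |\partial_x v(t,h_{i\epsilon}(t))|\,(y-h_{i\epsilon}(t))=O(\epsilon)$ on an interval of length $O(\epsilon)$ with $J_\epsilon=O(\epsilon^{-1})$, this piece is of order $\epsilon^{-1}|\partial_x v(t,h_{i\epsilon}(t))|$. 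By the Hopf lemma this derivative is bounded away from zero, so the boundary-layer defect \emph{blows up} like $\epsilon^{-1}$; it is not ``$O(\epsilon^\alpha)$ plus a term proportional to the boundary derivative'' and cannot be absorbed by any $O(\epsilon^{\gamma_1})$ slack, which is where your plan breaks down.

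The paper's resolution is a sign argument rather than an absorption argument. One extends $v_{i\epsilon}$ to a $C^{0,2+\alpha}$ function $\wtd v_{i\epsilon}$ on all of $\R$ and arranges, using the Hopf lemma and the choice of perturbation (the additive constant $3M_1\epsilon$ for $i=1$, which dominates $|\wtd v_{1\epsilon}|\le 2M_1\epsilon$ on the $\epsilon$-strips; the subtracted bump for $i=2$), that $v_1^*>0$ and $v_2^*<0$ on $(g_{i\epsilon}(t)-\epsilon,g_{i\epsilon}(t))\cup(h_{i\epsilon}(t),h_{i\epsilon}(t)+\epsilon)$. Then the truncated operator $\mathcal L^i_\epsilon$ compares with the extended operator $\wtd{\mathcal L^i_\epsilon}$ (which integrates over the full support of $J_\epsilon$) in exactly the favorable direction, \eqref{L-tdL}, so the singular truncation term is simply discarded and the Taylor expansion is only ever applied to $\wtd{\mathcal L^i_\epsilon}$, where it is valid up to $O(\epsilon^\alpha)$ uniformly. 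This sign mechanism also explains why the upper perturbation is the constant $3M_1\epsilon$ rather than a multiple of $m_\epsilon$ (a bump vanishing at the boundary could not force positivity on the outer strips). Two further details you would need: the bump $\tilde m_\epsilon$ is only $C^1$ across $x=0$, so on $[-\epsilon,\epsilon]$ the estimate of $\wtd{\mathcal L^2_\epsilon}[\tilde m_\epsilon]$ must use a first-order expansion with the Lipschitz bound on $\partial_x\tilde m_\epsilon$ (Step 3 of Lemma \ref{lem3.3}); and the lower perturbation must carry a factor $e^{L_1 t}$ with $L_1$ large so that the Lipschitz term $L_0|v_2^*-v_2|$ from $f$ is dominated, which is the content of Lemma \ref{lemma3.1a}.
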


We will use a series of lemmas to prove Proposition \ref{lemma3.1}. Before doing that, let us see how Theorem \ref{theorem1.1} follows easily from Proposition \ref{lemma3.1}.
\smallskip

{\bf Proof of Theorem \ref{theorem1.1}} (assuming Proposition \ref{lemma3.1}): Combining \eqref{ineq3.2} and \eqref{2.6a}, we immediately obtain 
the desired conclusion in Theorem \ref{theorem1.1} with $T=T_0$. $\hfill \Box$
\medskip

We now set to prove Proposition \ref{lemma3.1}.

\begin{lemma}\label{lemma3.1a}    If $L_1\geq L$ with $L=L(M_1+1)$ given by \eqref{1.5} and $M_1$ given by \eqref{2.4a},
and
\[
\;\widehat v_{2\epsilon}(t,x):=v_{2\epsilon}(t,x)-\epsilon^{\gamma_1}Ke^{L_1t}m_\epsilon(t,x) \;\mbox{ with } 
K:=\frac{h_0}{2\mu e^{2L_1T_0}}, 
\]
then there exists $\epsilon_1\in (0,\epsilon_0]$ such that for all $\epsilon\in (0,\epsilon_1]$,
	\begin{align*}
	\wht v_{2\epsilon}(t,x)>0\ \ \mbox{ for }\ \  t\in (0,T_0],\; x\in (g_{2\epsilon}(t),  h_{2\epsilon}(t)),
	\end{align*}
and	 $(\widehat v_2,g_2,h_2)=(\wht v_{2\epsilon}, g_{2\epsilon}, h_{2\epsilon})$ satisfies
	\begin{equation}\label{3.1b}
	\begin{cases}
	\prt \widehat v_2\leq d\prxx \widehat v_2 +f(t,x,\widehat v_2)-\widehat A_2\epsilon^{\gamma_1}, & t\in (0,T_0],\; x\in (g_2(t), h_2(t))\setminus\{0\},\\
	\widehat v_2(t,g_2(t))= \widehat v_2(t,h_2(t))=0, &  t \in (0, T_0],\\
	g_2'(t)\geq  -\mu  \prx \widehat v_2(t,g_2(t))+\epsilon^{\gamma_1}, & t \in (0, T_0],\\
	h_2'(t)\leq -\mu  \prx \widehat v_2(t,h_2(t))-\epsilon^{\gamma_1}, &  t \in (0, T_0],\\
	\widehat v_2(0,x)\leq v_0(x), &x\in [-h_0,h_0],
	\end{cases}
	\end{equation}
	where $\wht A_2:=\frac{2dK}{M_1^2}$.
\end{lemma}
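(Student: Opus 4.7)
The plan is to verify the differential inequality, the free boundary inequalities, the boundary and initial conditions in \eqref{3.1b}, and the interior positivity claim, by direct computation using the explicit form of $m_\epsilon$ and the uniform bounds from Theorem \ref{theorem2.2} and Remark \ref{rm2.5}.

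For the differential inequality, since $v_{2\epsilon}$ solves \eqref{f4} with $A_2=0$, a direct computation gives
\[\prt\widehat v_{2\epsilon}-d\prxx\widehat v_{2\epsilon}-f(t,x,\widehat v_{2\epsilon})=\bigl[f(t,x,v_{2\epsilon})-f(t,x,\widehat v_{2\epsilon})\bigr]-\epsilon^{\gamma_1}Ke^{L_1t}\bigl[L_1 m_\epsilon+\prt m_\epsilon -d\prxx m_\epsilon\bigr].\]
Using \eqref{1.5} with the constant $L=L(M_1+1)$ (which covers both $v_{2\epsilon}$ and $\widehat v_{2\epsilon}$ once $\epsilon_1$ is small) to bound the first bracket by $L\epsilon^{\gamma_1}Ke^{L_1t}m_\epsilon$, the problem reduces to showing $(L-L_1)m_\epsilon-\prt m_\epsilon+d\prxx m_\epsilon\le -2d/M_1^2$. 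Here $(L-L_1)m_\epsilon\le 0$ by $L_1\ge L$; the term $\prt m_\epsilon$ is nonnegative on each side because $\prt m_\epsilon=2x^2 g'_{2\epsilon}/g_{2\epsilon}^3\ge 0$ for $x<0$ (using $g'_{2\epsilon}<0$ from \eqref{g-h-mono} together with $g_{2\epsilon}<0$), and symmetrically for $x>0$; and $d\prxx m_\epsilon\le -2d/M_1^2$ because $\prxx m_\epsilon=-2/g_{2\epsilon}^2$ on $x<0$ and $-2/h_{2\epsilon}^2$ on $x>0$, while $|g_{2\epsilon}|,h_{2\epsilon}\le M_1$ by \eqref{2.4a}. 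The exclusion of $x=0$ accounts for the jump of $\prxx m_\epsilon$ there. Multiplying by $\epsilon^{\gamma_1}Ke^{L_1t}\ge \epsilon^{\gamma_1}K$ produces the desired bound $-\widehat A_2\epsilon^{\gamma_1}$.

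For the free boundary inequalities, $m_\epsilon$ vanishes at $x=g_{2\epsilon}(t)$ and $h_{2\epsilon}(t)$, and I compute $\prx m_\epsilon(t,g_{2\epsilon})=2/|g_{2\epsilon}|$ and $\prx m_\epsilon(t,h_{2\epsilon})=-2/h_{2\epsilon}$. Combining with the equations $g'_{2\epsilon}=-\mu\prx v_{2\epsilon}(t,g_{2\epsilon})+2\epsilon^{\gamma_1}$ and $h'_{2\epsilon}=-\mu\prx v_{2\epsilon}(t,h_{2\epsilon})-2\epsilon^{\gamma_1}$ (from \eqref{f4} with $B_2=-2$), the desired inequalities reduce to $|g_{2\epsilon}(t)|\ge 2\mu Ke^{L_1t}$ and $h_{2\epsilon}(t)\ge 2\mu Ke^{L_1t}$ for $t\in[0,T_0]$. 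Both hold since by the monotonicity \eqref{g-h-mono} we have $|g_{2\epsilon}|,h_{2\epsilon}\ge h_0$, while the choice $K=h_0/(2\mu e^{2L_1T_0})$ gives $2\mu Ke^{L_1t}\le h_0 e^{-L_1T_0}\le h_0$. The homogeneous boundary conditions $\widehat v_{2\epsilon}(t,g_{2\epsilon})=\widehat v_{2\epsilon}(t,h_{2\epsilon})=0$ and the initial bound $\widehat v_{2\epsilon}(0,x)\le v_0(x)$ are immediate since $m_\epsilon\ge 0$ and vanishes at the spatial boundary.

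The main obstacle is the interior positivity of $\widehat v_{2\epsilon}$. For this I would derive matching near-boundary bounds for $v_{2\epsilon}$ and $m_\epsilon$. By Remark \ref{rm2.5} and the $C^{1+\alpha/2,2+\alpha}$ bound in \eqref{2.4a}, $v_{2\epsilon}\to v$ in a sense strong enough to make $\prx v_{2\epsilon}(t,g_{2\epsilon}(t))$ and $\prx v_{2\epsilon}(t,h_{2\epsilon}(t))$ converge uniformly on $[0,T_0]$ to $\prx v(t,g(t))$ and $\prx v(t,h(t))$ respectively. Combined with $g'(t)<0<h'(t)$ on $[0,T_0]$ (a consequence of the Hopf lemma propagating $v_0'(\pm h_0)\ne 0$ from \eqref{1.2a}), there is an $\epsilon$-independent $c_0>0$ with $\prx v_{2\epsilon}(t,g_{2\epsilon}(t))\ge c_0$ and $-\prx v_{2\epsilon}(t,h_{2\epsilon}(t))\ge c_0$ for $\epsilon\in(0,\epsilon_1]$. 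A Taylor expansion controlled by the uniform $C^{2+\alpha}$ bound then yields a constant $c_1>0$ independent of $\epsilon$ such that
\[v_{2\epsilon}(t,x)\ge c_1\min\bigl\{x-g_{2\epsilon}(t),\,h_{2\epsilon}(t)-x\bigr\}.\]
On the other hand, $m_\epsilon(t,x)\le (2/h_0)\min\{x-g_{2\epsilon}(t),\,h_{2\epsilon}(t)-x\}$ since $|g_{2\epsilon}|,h_{2\epsilon}\ge h_0$. Hence $\widehat v_{2\epsilon}(t,x)\ge \bigl[c_1-(2Ke^{L_1T_0}/h_0)\epsilon^{\gamma_1}\bigr]\min\{x-g_{2\epsilon}(t),h_{2\epsilon}(t)-x\}>0$ on the open interval once $\epsilon_1$ is further reduced.
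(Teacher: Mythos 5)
Your proof is correct and follows essentially the same route as the paper's: the same computation of $\prt m_\epsilon\ge 0$ and $\prxx m_\epsilon\le -2/M_1^2$ for the differential inequality, the same use of $\prx m_\epsilon=\mp 2/h_{2\epsilon},\,2/|g_{2\epsilon}|$ together with the choice of $K$ for the free boundary inequalities, and a Hopf-lemma/near-boundary gradient bound for interior positivity (which you spell out in more detail than the paper, which simply invokes the Hopf boundary lemma).
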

\begin{proof} From the definition of $m_\epsilon$ and $\widehat v_2$, we have $\widehat v_2(t,g_2(t))= \widehat v_2(t,h_2(t))=0$, and $\widehat v_2(0,x)\leq u_0(x)$.  By our assumptions on $u_0$, the definition of $m_\epsilon(t,x)$, and the Hopf boundary lemma applied to $\partial_x v_{2\epsilon}(t, x)$ with $x\in\partial (g_{2\epsilon}(t), h_{2\epsilon}(t))$, we immediately see that
for all small $\epsilon>0$ (depending on $L_1$ and $T_0$), 
\[
\wht v_{2\epsilon}(t,x)>0\ \ \mbox{ for }\ \  t\in [0,T_0],\; x\in (g_{2\epsilon}(t),  h_{2\epsilon}(t)).
\]
 For 
 \[\tilde m=\tilde m_\epsilon:= \epsilon^{\gamma_1}Ke^{L_1t}m_\epsilon(t,x),
 \]
 a simple  computation gives, 
		\begin{equation}\label{3.2c}
	\begin{cases}
	\prt \tilde m=L_1 \tilde m +\epsilon^{\gamma_1} K  e^{L_1 t} 2x^2h_2^{-3}h_2'\geq L_1\tilde m, & x>0,\\
	  \prxx \tilde m=-2\epsilon^{\gamma_1} K e^{L_1 t}h_2^{-2}\leq 0, & x>0,\\
	\prt \tilde m=L_1\tilde m+\epsilon^{\gamma_1} K e^{L_1 t} 2x^2g_2^{-3}g_2'\geq L_1\tilde m, & x<0,\\
	  \prxx \tilde m=-2\epsilon^{\gamma_1} K e^{L_1 t}g_2^{-2}\leq 0,& x<0.
	\end{cases}
	\end{equation}
	Then by \eqref{1.2} and \eqref{f4} we have, provided that $L_1\geq L_0$,
	\begin{align*}
	\prt \widehat v_2=&d\prxx \widehat v_2+f(t,x,\widehat v_2)-\prt \tilde m+d\prxx \tilde m+f(t,x,v_2)-f(t,x,\widehat v_2)\\
   \leq & d\prxx \widehat v_2+f(t,x,\widehat v_2)-L_1\tilde m +d\prxx \tilde m+L_0|\widehat v_2-v_2|\\
\leq & d\prxx \widehat v_2+f(t,x,\widehat v_2)+d\prxx \tilde m\\
\leq &d\prxx \widehat v_2+f(t,x,\widehat v_2)-2d K M_1^{-2}\epsilon^{\gamma_1}\ \mbox{ for }  t\in (0,T_0],\; x\in  (g_2(t), h_2(t))\setminus\{0\},
	\end{align*}
	where we have used $|g_2(t)|$, $h_2(t)\leq M_1$ and \eqref{g-h-mono}.
	
	Next we verify the third and fourth inequalities in \eqref{3.1b}. Applying the fourth equation of \eqref{f4} and $K=\frac{h_0}{2\mu e^{2L_1T_0}}$,  we deduce
	\begin{align*}
	h_2'(t)=& -\mu \prx v_2(t,h_2)-2\epsilon^{\gamma_1}=-\mu  [\prx \widehat v_2(t,h_2)+\prx \tilde m(t,h_2)]-2\epsilon^{\gamma_1}\\
	=&-\mu  \prx \widehat v_2(t,h_2)+2\mu K e^{L_1 t}h_2^{-1}\epsilon^{\gamma_1}-2\epsilon^{\gamma_1}\\
\leq& -\mu  \prx \widehat v_2(t,h_2)+2\mu K e^{2L_1 T_0}h_0^{-1}\epsilon^{\gamma_1}-2\epsilon^{\gamma_1}\\
=& -\mu  \prx \widehat v_2(t,h_2)-\epsilon^{\gamma_1}\ \ \ \ \ \ {\rm for}\ t \in (0, T_0],
	\end{align*}
and  analogously, 
	\begin{align*}
	g_2'(t) \geq   -\mu  \prx w(t,h_2)- \epsilon^{\gamma_1}\ \ \ \ \ \ {\rm for}\ t \in (0, T_0].
	\end{align*}
	The proof is finished.
\end{proof}

Since $v_i=v_{i\epsilon}\in C^{1+\alpha/2, 2+\alpha}(\ol \Omega_i)$ for $i=1,2$ and $\epsilon\in [0,\epsilon_0]$, there exist
 $\wtd v_i=\wtd v_{i\epsilon}\in C^{0,2+{\alpha}}(D)$ for $i=1,2$ and $D=[0,T_0]\times \R$ such that $\wtd v_i=v_i$ in $\Omega_i$. Moreover, in view of \eqref{2.4a}, we may further require, for $i=1,2$ and $\epsilon\in [0,\epsilon_0]$,
	\begin{equation}\label{3.1a}
\dd\sup_{0\leq t\leq T_0}|| \wtd v_i(t,\cdot)||_{C^{2+\alpha}([g_i(t)-\epsilon_0,h_i(t)+\epsilon_0 ])}<2M_1.
\end{equation}
We now define, for $\epsilon\in (0,\epsilon_0]$ and $(t,x)\in D=[0,T_0]\times \R$,
\[
\begin{cases}
v_1^*(t,x)=v_{1\epsilon}^*(t,x):=\wtd v_{1\epsilon}(t,x)+3M_1\epsilon,\\ v_2^*(t,x)=v_{2\epsilon}^*(t,x):=\wtd v_{2\epsilon}(t,x)-\tilde m_\epsilon(t,x).
\end{cases}
\]

Since $v_2(t,g_2(t))=v_2(t,h_2(t))=0$, and by the  Hopf boundary Lemma and the assumptions on $u_0$,  $\prx v_2(t,g_2(t))>0$ and $\prx v_2(t,h_2(t))<0$ for $t\in [0, T_0]$,  for all  sufficiently small $\epsilon>0$, say $\epsilon\in (0,\epsilon_2]\subset (0,\epsilon_1]$,  we have
 $\prx\wtd v_2(t,x)>0$  for $x\in [g_2(t)-\epsilon, g_2(t)]$ and $\prx\wtd v_2(t,x)<0$ for $x\in [h_2(t),h_2(t)+\epsilon]$, which immediately  leads to
\begin{align}\label{3.2a}
v^*_2(t,x)< 0\ \ \ \ \ \ \ \ {\rm for}\ t\in [0,T_0],\; \epsilon\in (0,\epsilon_2],\; x\in (g_2(t)-\epsilon, g_1(t))\cup (h_2(t),h_2(t)+\epsilon).
\end{align}

On the other hand, due to \eqref{3.1a}, by shrinking $\epsilon_2$ if necessary, we have
\begin{align}\label{3.3a}
v_1^*(t,x)>0\ \  {\rm for}\ t\in [0,T_0],\; \epsilon\in (0,\epsilon_2],\; x\in  (g_1(t)-\epsilon, g_1(t))\cup (h_1(t),h_1(t)+\epsilon).
\end{align}

Set
\begin{align*}
&\mathcal L_\epsilon^i[v_i^*](t,x):=\frac{C_*}{\epsilon^2}\lf[\int_{g_i(t)}^{h_i(t)} J_\epsilon(x-y)v_i^*(t,y)\rd y-v_i^*(t,x)\rr],\\
&\wtd{\mathcal L ^i_\epsilon}[v_i^*](t,x):=\frac{C_*}{\epsilon^2}\lf[\int_{g_i(t)-\epsilon}^{h_i(t)+\epsilon} J_\epsilon(x-y)v_i^*(t,y)\rd y-v_i^*(t,x)\rr]. 
\end{align*}
It follows from \eqref{3.2a} and \eqref{3.3a} that, for $t\in [0, T_0]$ and $\epsilon\in (0,\epsilon_2]$,
\begin{equation}\label{L-tdL}
\mathcal L^1_\epsilon [v_1^*](t,x)<\wtd{\mathcal L^1_\epsilon}[v_1^*](t,x),\; \mathcal L_\epsilon^2[v_2^*](t,x)>\wtd{\mathcal L^2_\epsilon}[v_2^*](t,x).
\end{equation}

We show next that $(v_1^*,g_1,h_1)$ (resp.   $(v_2^*,g_2,h_2)$ ) is an upper (resp. a  lower) solution of \eqref{f3}.  

\begin{lemma}\label{lem3.3}
For all small $\epsilon>0$, we have
\begin{equation}\label{3.5c}
\begin{cases}
&\prt  v_1^*\geq   d\mathcal L_\epsilon^1 [v_1^*]+f(t,x,v_1^*)\ \ \ \ \ {\rm for}\ 
t\in [0,T_0],\; x\in (g_1(t),h_1(t)),\\
&\prt  v_2^*\leq  d{\mathcal L_\epsilon^2} [v_2^*]+f(t,x,v_2^*) \ \ \ \ \ {\rm for}\ 
t\in [0,T_0],\; x\in (g_2(t),h_2(t)).
\end{cases}
\end{equation}
\end{lemma}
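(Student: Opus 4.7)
The plan is to compare the nonlocal operators $\mathcal L^i_\epsilon$ with the classical Laplacian $\partial_{xx}$ via Taylor expansion, using the scaling of $J_\epsilon$. First I would use \eqref{L-tdL} to replace $\mathcal L^i_\epsilon$ by $\wtd{\mathcal L^i_\epsilon}$, which is free in the useful direction thanks to the signs in \eqref{3.2a}--\eqref{3.3a}. On $\wtd{\mathcal L^i_\epsilon}$ the stencil $B_\epsilon(x)$ sits entirely inside the integration range for every $x\in(g_i(t),h_i(t))$, so $\int J_\epsilon(x-y)\,dy=1$; combined with the evenness of $J$ and the normalisation $\int J(z)z^2\,dz=2/C_*$ from \eqref{C*}, Taylor expansion yields $\wtd{\mathcal L^i_\epsilon}[\phi]=\partial_{xx}\phi+(\text{remainder})$ for sufficiently regular $\phi$.

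For the upper solution, the additive constant $3M_1\epsilon$ in $v_1^*=\tilde v_1+3M_1\epsilon$ is annihilated by $\wtd{\mathcal L^1_\epsilon}$, so that $\wtd{\mathcal L^1_\epsilon}[v_1^*]=\wtd{\mathcal L^1_\epsilon}[\tilde v_1]=v_{1,xx}+O(\epsilon^\alpha)$ on $(g_1(t),h_1(t))$, with the remainder controlled by $[\partial_{xx}\tilde v_1]_{C^\alpha}\le 2M_1$ from \eqref{3.1a}. Feeding this into $\partial_t v_1=d v_{1,xx}+f(t,x,v_1)+\epsilon^{\gamma_1}$ and using the Lipschitz bound \eqref{1.5} to replace $f(t,x,v_1)$ by $f(t,x,v_1^*)$ at cost $3LM_1\epsilon$, the first inequality of \eqref{3.5c} reduces to $\epsilon^{\gamma_1}\ge O(\epsilon^\alpha)+O(\epsilon)$, which holds for all small $\epsilon$ because $\gamma_1<\min\{\alpha,1\}$ by \eqref{2.3}.

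For the lower solution I would invoke Lemma \ref{lemma3.1a} to write $\partial_t v_2^*\le d\partial_{xx}v_2^*+f(t,x,v_2^*)-\hat A_2\epsilon^{\gamma_1}$ for $x\in(g_2(t),h_2(t))\setminus\{0\}$, reducing the task to showing $d\bigl(\partial_{xx}v_2^*-\mathcal L^2_\epsilon[v_2^*]\bigr)\le\hat A_2\epsilon^{\gamma_1}$. Splitting $v_2^*=\tilde v_2-\tilde m_\epsilon$, the $\tilde v_2$ piece contributes an $O(\epsilon^\alpha)$ error by the same Taylor argument. The key observation about $\tilde m_\epsilon$ is that it is \emph{piecewise quadratic}: for $|x|>\epsilon$ the stencil lies entirely on one side of $0$, so Taylor is exact and $\wtd{\mathcal L^2_\epsilon}[\tilde m_\epsilon]=\partial_{xx}\tilde m_\epsilon$ with no error whatsoever. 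The delicate case is $|x|\le\epsilon$, where the stencil straddles the corner $x=0$ and the jump $2(g_{2\epsilon}^{-2}-h_{2\epsilon}^{-2})$ of $\partial_{xx}m_\epsilon$ generates a genuine discrepancy; I would estimate it explicitly by splitting $B_\epsilon(x)=[x-\epsilon,0]\cup[0,x+\epsilon]$, using the identity $m^+_\epsilon-m^-_\epsilon=y^2(g_{2\epsilon}^{-2}-h_{2\epsilon}^{-2})$ and the uniform lower bound $|g_{2\epsilon}(t)|,h_{2\epsilon}(t)\ge h_0$ from \eqref{g-h-mono}, then rescaling by $z=(y-x)/\epsilon$. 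The resulting bound is $O(\epsilon^{\gamma_1})$ with an explicit constant depending on $K$, $h_0$, $\|J\|_\infty$ and $C_*$. This corner estimate is the main obstacle: I must verify that after multiplication by $d$ it is dominated by $\hat A_2\epsilon^{\gamma_1}=\frac{2dK}{M_1^2}\epsilon^{\gamma_1}$, which should follow from the specific choice of $K$ in Lemma \ref{lemma3.1a} together with shrinking $\epsilon_2$ further if needed. Finally, the inequality at the single point $x=0$ follows from the inequalities on both sides by continuity in $x$ of $\partial_t v_2^*$ and $\mathcal L^2_\epsilon[v_2^*]$.
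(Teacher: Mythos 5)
Your overall architecture matches the paper's: pass from $\mathcal L^i_\epsilon$ to $\wtd{\mathcal L^i_\epsilon}$ via \eqref{L-tdL}, Taylor-expand against $\partial_{xx}$ using \eqref{C*} and \eqref{3.1a}, and isolate the stencils that straddle $x=0$. The $v_1^*$ part and the $v_2^*$ part on $(g_2(t),h_2(t))\setminus[-\epsilon,\epsilon]$ are correct and essentially identical to the paper's Steps 1, 2, 5 and 6 (there the error $3dM_1\epsilon^\alpha$ is absorbed by $\epsilon^{\gamma_1}$ resp.\ $\hat A_2\epsilon^{\gamma_1}$ precisely because $\gamma_1<\alpha$).

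The gap is in how you close the corner case $|x|\le\epsilon$. You propose to dominate the straddling discrepancy of $\tilde m_\epsilon$ by $\hat A_2\epsilon^{\gamma_1}=\frac{2dK}{M_1^2}\epsilon^{\gamma_1}$, "using the specific choice of $K$ and shrinking $\epsilon_2$". Neither can work: the discrepancy $\bigl|\wtd{\mathcal L^2_\epsilon}[\tilde m_\epsilon]-\partial_{xx}\tilde m_\epsilon\bigr|$ is controlled by the jump of $\partial_{xx}\tilde m_\epsilon$ across $0$, which is $2\epsilon^{\gamma_1}Ke^{L_1t}\,\bigl|g_{2\epsilon}^{-2}-h_{2\epsilon}^{-2}\bigr|$; this is proportional to \emph{the same} $K$ and \emph{the same} power $\epsilon^{\gamma_1}$ as $\hat A_2\epsilon^{\gamma_1}$, so adjusting $K$ or $\epsilon$ changes nothing, and the constants go the wrong way since $M_1\ge h_0\ge$ (each of $|g_{2\epsilon}|,h_{2\epsilon}$) forces $M_1^{-2}\le h_0^{-2}$ while the discrepancy carries the extra factor $e^{L_1t}\ge 1$. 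The reason the argument can nevertheless be closed is that the slack $-\hat A_2\epsilon^{\gamma_1}$ you imported from Lemma \ref{lemma3.1a} is not the full negative reserve available: in deriving \eqref{3.1b} the term $-(L_1-L_0)\tilde m_\epsilon$ was discarded, and on $[-\epsilon,\epsilon]$ one has $m_\epsilon\ge 1-\epsilon^2/h_0^2\ge\frac12$, so this discarded term equals $-(L_1-L_0)Ke^{L_1t}m_\epsilon\,\epsilon^{\gamma_1}$, which carries the matching factor $Ke^{L_1t}\epsilon^{\gamma_1}$ and an adjustable coefficient $L_1$. This is exactly what the paper's Step 4 does: it rewrites the equation for $v_2^*$ on $[-\epsilon,\epsilon]$ keeping $-\partial_t\tilde m_\epsilon\le -L_1\tilde m_\epsilon$ explicit, bounds $\bigl|\wtd{\mathcal L^2_\epsilon}[\tilde m_\epsilon]\bigr|\le 4\epsilon^{\gamma_1}Ke^{L_1t}h_0^{-2}$, and then chooses $L_1$ with $\frac12(L_1-L_0)>4dh_0^{-2}$. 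To repair your proof you must unpack Lemma \ref{lemma3.1a} near $x=0$ in this way rather than use its conclusion as a black box. (Your final remark that the single point $x=0$ is recovered by continuity is fine, as is the observation that $\tilde m_\epsilon$ is piecewise quadratic so the stencil error vanishes for $|x|>\epsilon$.)
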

\begin{proof}
By definition, 
\begin{equation}\label{3.1}
\begin{cases}
v_1^*(t,x)=v_1(t,x)+3M_1 \epsilon,  & t\in [0,T_0],\; x\in  [g_1(t),h_1(t)],\\
v_2^*(t,x)=v_2(t,x)-\tilde m_\epsilon (t,x)=\hat v_2(t,x), &t\in [0,T_0],\; x\in  [g_2(t),h_2(t)],
\end{cases}
\end{equation}
and it follows from \eqref{f4} and \eqref{3.1b} that
\begin{align*}
\prt  v_1^*=&\prt v_1=d\prxx  v_1+f(t,x,v_1)+\epsilon^{\gamma_1}=d\prxx  v_1^*+f(t,x,v_1)+\epsilon^{\gamma_1}\\
=&d\wtd{\mathcal L_\epsilon^1} [v_1^*]+f(t,x,v_1^*)+d\left(\prxx  v_1^*-\wtd{\mathcal L_\epsilon^1} [v_1^*]\right)\\
&+f(t,x,v_1)-f(t,x,v_1^*)+\epsilon^{\gamma_1}\ \ \ \ \ {\rm for}\ 
t\in (0,T_0],\; x\in (g_1(t),h_1(t))
\end{align*}
and
\begin{align*}
\prt  v_2^*=&\prt \wht v_2\leq d\prxx \widehat v_2 +f(t,x,\widehat v_2)-\widehat A_2\epsilon^{\gamma_1}=d\prxx  v_2^* +f(t,x, v_2^*)-\widehat A_2\epsilon^{\gamma_1}\\
=&d\wtd{\mathcal L_\epsilon^2} [v_2^*]+f(t,x,v_2^*)\\
&+d\left(\prxx  v_2^*-\wtd{\mathcal L_\epsilon^2} [v_2^*]\right)-\wht A_2\epsilon^{\gamma_1}\ \ \  \mbox{ for }
t\in (0,T_0],\; x\in (g_2(t),h_2(t))\setminus [-\epsilon,\epsilon].
\end{align*}

Since $\partial_{xx}\tilde m_\epsilon(t,x)$ and hence $\partial_{xx} v^*_2(t,x)$ does not exist at $x=0$,  we estimate $v_2^*(t,x)$ differently
for $x\in [-\epsilon,\epsilon]$.
 As the kernel function in the  operator $\wtd{\mathcal L_\epsilon^2}$ is $J_\epsilon$ whose support is contained in  $[-\epsilon,\epsilon]$ with $0<\epsilon\ll 1$, we have, for $x\in [-\epsilon,\epsilon]$,  
 \[
 \wtd{\mathcal L_\epsilon^2} [v_2^*](t,x)= \wtd{\mathcal L_\epsilon^2} [\wht v_2](t,x)=\wtd{\mathcal L_\epsilon^2} [v_2](t,x)-\wtd{\mathcal L_\epsilon^2} [\tilde m_\epsilon](t,x),
 \]
   and so $v_2^*$ satisfies, for $
(t,x)\in (0,T_0]\times[-\epsilon,\epsilon]$,
\begin{align*}
\prt  v_2^*=&\prt  v_2-\prt \tilde m_\epsilon=d\prxx v_2 +f(t,x, v_2)-\prt \tilde m_\epsilon\\
=&d\wtd{\mathcal L_\epsilon^2} [v_2^*]+f(t,x,v_2^*)  +d\left(\prxx  v_2-\wtd{\mathcal L_\epsilon^2} [v_2]\right)\\
&+\left(d\wtd{\mathcal L_\epsilon^2} [\tilde m_\epsilon]-\prt \tilde m_\epsilon\right) +f(t,x,v_2)-f(t,x,v_2^*).
\end{align*}

We now prove \eqref{3.5c} in several steps.

{\bf Step 1:} We show that, for $t\in (0,T_0],\; x\in (g_1(t),h_1(t))$ and all small $\epsilon>0$, 
\[
\left|\wtd{\mathcal L_\epsilon^1}[v_1^*]-\prxx v_1^*\right|\leq 3M_1\epsilon^{\alpha}.
\]

Recall ${\rm spt} (J_\epsilon)\subset [-\epsilon,\epsilon]$. By Taylor expansion, we obtain, for such $(t,x)$,
\begin{align*}
|\wtd{\mathcal L_\epsilon^1}[v_1^*]-\prxx v_1^*|=&\lf|\frac{C_*}{\epsilon^2} \lf[\int_{g_1(t)-\epsilon}^{h_1(t)+\epsilon} \frac{1}{\epsilon}J(\frac{x-y}{\epsilon})v_1^*(t,y)\rd y-v_1^*(t,x)\rr]-\prxx v_1^*\rr|\\
=&\lf|\frac{C_*}{\epsilon^2} \left[\int_{x-\epsilon}^{x+\epsilon} \frac{1}{\epsilon}J(\frac{x-y}{\epsilon})v_1^*(t,y)dy-v_1^*(t,x)\right]-\prxx v_1^*\rr|\\
=&\lf|\frac{C_*}{\epsilon^2} \int_{-1}^{1} J(z)\big[v_1^*(t,x+\epsilon z)-v_1^*(t,x)\big]\rd z-\prxx v_1^*\rr|\\
= &\lf|\frac{C_*}{\epsilon^2} \int_{-1}^1 J(z)\lf[\epsilon z\prx v_1^*(t,x)+\frac{(\epsilon z)^2\prxx v_1^*(t,x+\delta_1(t,z))}{2}\rr]\rd z-\prxx v_1^*\rr|\\
\leq &\lf|\frac{C_*}{\epsilon^2} \int_{-1}^1 J(z)\lf[\epsilon z \prx v_1^*(t,x)+\frac{1}{2}(\epsilon z)^2\prxx v_1^*(t,x)\rr]\rd z-\prxx v_1^*\rr|\\
&+\frac{C_*}{\epsilon^2} \int_{-1}^1  J(z)\frac{1}{2}(\epsilon z)^2\big|\prxx v_1^*(t,x)-\prxx v_1^*(t,x+\delta_1(t,z))\big|\rd z
\end{align*} 
where  $\delta_1(t,y)$ lies between $0$ and $\epsilon z$.  Due to the symmetry of $J$ and the choice of $C_*$, we have
\[
\frac{C_*}{\epsilon^2} \int_{-1}^1 J(z)\lf[\epsilon z \prx v_1^*(t,x)+\frac{1}{2}(\epsilon z)^2\prxx v_1^*(t,x)\rr]\rd z-\prxx v_1^*
=0.
\]
 Thus
\begin{align*}
|\wtd{\mathcal L_\epsilon^1}[v_1^*]-\prxx v_1^*|\leq &\frac{C_*}{\epsilon^2} \int_{-1}^1  J(z)\frac{1}{2}(\epsilon z)^2|\prxx v_1^*(t,x)-\prxx v_1^*(t,x+\delta_i(t,z))|\rd y\\
\leq &3M_1\epsilon^{\alpha}  {C_*}\int_{-1}^{1} J(z)\frac{1}{2}z^2\rd z= 3M_1\epsilon^{\alpha},
\end{align*} 
where  we have used the estimates in \eqref{3.1a} with $\wtd v_i$ replaced by $v_1^*$. This concludes Step 1.

{\bf Step 2:} We show that, for $t\in (0,T_0],\; x\in (g_2(t),h_2(t))\setminus [-\epsilon,\epsilon]$ and all small $\epsilon>0$, 
\[
|\wtd{\mathcal L_\epsilon^2}[v_2^*]-\prxx v_2^*|\leq 3M_1\epsilon^{\alpha}.
\]

The proof here is identical to that in Step 1, so it is omitted.

\medskip

{\bf Step 3:} We show that, for $t\in (0,T_0],\; x\in [-\epsilon,\epsilon]$ and all small $\epsilon>0$, 
\begin{align*}
\left|\wtd{\mathcal L_\epsilon^2}[v_2]-\prxx v_2\right|\leq 3M_1\epsilon^{\alpha},\; \left|\wtd{\mathcal L_\epsilon^2} [\tilde m_\epsilon]\right |\leq
4\epsilon^{\gamma_1} K e^{L_1 t}h_0^{-2},
\end{align*}
where $K=\frac{h_0}{2\mu e^{2L_1T}}$  is given by Lemma \ref{lemma3.1a}. 

Similarly to Steps 1 and 2, for all small $\epsilon>0$, we have 
\begin{align*}
|\wtd{\mathcal L_\epsilon^2}[v_2]-\prxx v_2|\leq  3M_1\epsilon^{\alpha} \ \ \ {\rm for}\ (t,x)\in (0,T_0]\times[-\epsilon,\epsilon].
\end{align*} 

 It remains to prove the second inequality.
 Using the mean value theorem, by  similar calculations as in Steps 1 and 2, we obtain for $(t,x)\in (0,T_0]\times[-\epsilon,\epsilon]$,
\begin{align*}
\left|\wtd{\mathcal L_\epsilon^2}\,[\tilde m_\epsilon] \right|=&\lf|\frac{C_*}{\epsilon^2} \int_{g_2(t)-\epsilon}^{h_2(t)+\epsilon} \frac{1}{\epsilon}J(\frac{x-y}{\epsilon})[\tilde m_\epsilon (t,y)-\tilde m_\epsilon (t,x)]\rd y\rr|\\
=&\lf|\frac{C_*}{\epsilon^2} \int_{x-\epsilon}^{x+\epsilon} \frac{1}{\epsilon}J(\frac{x-y}{\epsilon})[\tilde m_\epsilon(t,y)-\tilde m_\epsilon (t,x)]\rd y\rr|\\
= &\lf|\frac{C_*}{\epsilon^2} \int_{x-\epsilon}^{x+\epsilon} \frac{1}{\epsilon}J(\frac{x-y}{\epsilon})(y-x)\prx \tilde m_\epsilon(t,\delta(t,y))\rd y\rr|
\\
=&\lf|\frac{C_*}{\epsilon^2} \int_{x-\epsilon}^{x+\epsilon} \frac{1}{\epsilon}J(\frac{x-y}{\epsilon})(y-x)[\prx \tilde m_\epsilon(t,\delta(t,y))-\prx \tilde m_\epsilon(t,x)]\rd y\rr|,
\end{align*} 
where $\delta(t,y)$ lies between $x$ and $y$, and we have used
\[
 \int_{x-\epsilon}^{x+\epsilon} \frac{1}{\epsilon}J(\frac{x-y}{\epsilon})(y-x)dy=0.
\] 
From the definition of $\tilde m_\epsilon$ and \eqref{3.2c}, we see $\prx \tilde m_\epsilon$ is Lipschitz continuous, and 
\begin{align*}
|\prx \tilde m_\epsilon (t,\delta_i(t,y))-\prx \tilde m_\epsilon(t,x)|\leq 2\epsilon^{\gamma_1} K  e^{L_1 t}h_0^{-2}|\delta(t,y)-y|\leq 2\epsilon^{\gamma_1} K e^{L_1 t}h_0^{-2}|x-y|.
\end{align*}
Hence, we can apply the  definition of $C_*$ to deduce 
\begin{align*}
\left|\wtd{\mathcal L_\epsilon^2} [\tilde m_\epsilon]\right |\leq 
&2\epsilon^{\gamma_1} K  e^{L_1 t}h_0^{-2}\frac{C_*}{\epsilon^2} \int_{x-\epsilon}^{x+\epsilon} \frac{1}{\epsilon}J(\frac{x-y}{\epsilon})(y-x)^2 \rd y\\
=&2\epsilon^{\gamma_1} K  e^{L_1 t}h_0^{-2}C_*\int_{-1}^{1} J(z)z^2\rd z=4\epsilon^{\gamma_1} K e^{L_1 t}h_0^{-2}\ \  \ {\rm for}\ (t,x)\in (0,T_0]\times[-\epsilon,\epsilon].
\end{align*} 
This completes Step 3.

{\bf Step 4:} We show that for $t\in (0,T_0],\; x\in [-\epsilon,\epsilon]$ and all small $\epsilon>0$, 
\[
\prt  v_2^*\leq d\wtd{\mathcal L_\epsilon^2} [v_2^*]+f(t,x,v_2^*).
\]

By the identity just before Step 1, it suffices to show, for $t\in (0,T_0],\; x\in [-\epsilon,\epsilon]$ and all small $\epsilon>0$, 
  \[
  d\left(\prxx  v_2-\wtd{\mathcal L_\epsilon^2} [v_2]\right)
+\left(d\wtd{\mathcal L_\epsilon^2} [\tilde m_\epsilon]-\prt \tilde m_\epsilon\right) +f(t,x,v_2)-f(t,x,v_2^*)\leq 0.
\]

By \eqref{1.2}, \eqref{3.2c} and the estimates in Step 3, we have, for $t\in (0,T_0],\; x\in [-\epsilon,\epsilon]$ and all small $\epsilon>0$, 
\begin{align*}
 &d\left(\prxx  v_2-\wtd{\mathcal L_\epsilon^2} [v_2]\right)
+\left(d\wtd{\mathcal L_\epsilon^2} [\tilde m_\epsilon]-\prt \tilde m_\epsilon\right) +f(t,x,v_2)-f(t,x,v_2^*)
\\
\leq &\; 3dM_1\epsilon^{\alpha}+4d K  e^{L_1 t}h_0^{-2}\epsilon^{\gamma_1} -L_1 \tilde m_\epsilon+L_0|v_2^*-v_2|\\
=&-(L_1-L_0)\tilde m_\epsilon+3dM_1\epsilon^{\alpha}+4 dK  e^{L_1 t}h_0^{-2}\epsilon^{\gamma_1}\\
= &\left[-(L_1-L_0)Ke^{L_1 t}m_\epsilon +4 dK  e^{L_1 t}h_0^{-2}\right]\epsilon^{\gamma_1}+3dM_1\epsilon^{\alpha}\\
\leq & \left[ -(L_1-L_0)Ke^{L_1 t}(1-\frac{\epsilon^2}{h_0^2}) +4d K  e^{L_1 t}h_0^{-2}\right]\epsilon^{\gamma_1}+3dM_1\epsilon^{\alpha}\\
\leq &-\left[\frac{1}{2}(L_1-L_0)-4dh_0^{-2}\right]Ke^{L_1 t} \epsilon^{\gamma_1}+3dM_1\epsilon^{\alpha}\\
\leq &-\left[\frac{1}{2}(L_1-L_0)-4dh_0^{-2}\right]K \epsilon^{\gamma_1}+3dM_1\epsilon^{\alpha}<0,
\end{align*}
provided that we first choose $L_1>L_0$ such that $\frac{1}{2}(L_1-L_0)-4dh_0^{-2}>0$ and then choose $\epsilon>0$ sufficiently small.
Step 4 is now completed.

{\bf Step 5:} We show that for $t\in (0,T_0],\; x\in (g_2(t), h_2(t))\setminus [-\epsilon,\epsilon]$ and all small $\epsilon>0$, 
\[
\prt  v_2^*\leq d\wtd{\mathcal L_\epsilon^2} [v_2^*]+f(t,x,v_2^*).
\]

Using our earlier calculation and the estimate in Step 2, we obtain, for  $t\in (0,T_0],\; x\in (g_2(t), h_2(t))\setminus [-\epsilon,\epsilon]$ and all small $\epsilon>0$, 
\begin{align*}
&\prt  v_2^*-d\wtd{\mathcal L_\epsilon^2} [v_2^*]-f(t,x,v_2^*)\\
\leq &\;d\left(\prxx  v_2^*-\wtd{\mathcal L_\epsilon^2} [v_2^*]\right)-\wht A_2\epsilon^{\gamma_1}\\
\leq &\;3dM_1\epsilon^\alpha -\wht A_2\epsilon^{\gamma_1}<0.
\end{align*}
This completes Step 5.

{\bf Step 6:} We show that for $t\in (0,T_0],\; x\in (g_1(t), h_1(t))$ and all small $\epsilon>0$, 
\[
\prt  v_1^*\geq d\wtd{\mathcal L_\epsilon^1} [v_1^*]+f(t,x,v_1^*).
\]

From \eqref{1.2}, we have 
\[
|f(t,x,v_1)-f(t,x,v_1^*)|\leq L_0|v_1-v_1^*|= 3L_0M_1\epsilon.
\]
Combining this with our earlier calculations and the estimate in Step 1, we obtain, for $t\in (0,T_0],\; x\in (g_1(t), h_1(t))$ and all small $\epsilon>0$,
\begin{align*}
&\prt  v_1^*-d\wtd{\mathcal L_\epsilon^1} [v_1^*]-f(t,x,v_1^*)\\
&=d\left(\prxx  v_1^*-\wtd{\mathcal L_\epsilon^1} [v_1^*]\right)+f(t,x,v_1)-f(t,x,v_1^*)+\epsilon^{\gamma_1}\\
&\geq -3dM_1\epsilon^\alpha -3L_0M_1\epsilon +\epsilon^{\gamma_1}>0.
\end{align*}
This completes Step 6.

Clearly \eqref{3.5c} follows directly from \eqref{L-tdL} and the inequalities proved in Steps 4, 5 and 6.
\end{proof}

\begin{lemma}\label{lem3.4}
For all small $\epsilon>0$, we have
	\begin{equation}\label{3.2}
\begin{cases}
\dd h_1'(t)\geq \mu\frac{C_0}{\epsilon^{3/2}} \int_{g_1(t)+\sqrt\epsilon}^{h_1(t)-\sqrt{\epsilon}}\int_{h_1(t)-\sqrt{\epsilon}}^{\yy}J_{\epsilon}(x-y)v_1^*(t,x)\rd y\rd x,\\[3mm]
\dd h_2'(t)\leq  \mu\frac{C_0}{\epsilon^{3/2}} \int_{g_2(t)+\sqrt\epsilon}^{h_2(t)-\sqrt{\epsilon}}\int_{h_2(t)-\sqrt{\epsilon}}^{\yy}J_{\epsilon}(x-y)v_2^*(t,x)\rd y\rd x.
\end{cases}
\end{equation}
\end{lemma}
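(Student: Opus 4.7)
The plan is to prove the two inequalities in \eqref{3.2} by Taylor expanding $v_i^*$ near $h_i(t)$, exploiting the scaling identity behind the definition of $C_0$, and matching the result against the Stefan equations in \eqref{f4}. I describe the argument for the $h_1$-inequality; the $h_2$-version follows the same pattern but with extra care for the $\tilde m_\epsilon$ correction.

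Using the change of variables that produced \eqref{g'-h'}, the right-hand side of the first inequality in \eqref{3.2} equals
\[
\mu\,\frac{C_0}{\epsilon^{3/2}}\int_0^{\epsilon}\!\int_{-\epsilon}^{0}J_\epsilon(x-y)\,v_1^*\!\left(t,h_1(t)-\sep-x\right)\rd y\,\rd x.
\]
Since $v_1(t,h_1(t))=0$ and $v_1^*=\wtd v_{1\epsilon}+3M_1\epsilon$, the $C^{2+\alpha}$ bound \eqref{3.1a} yields
\[
v_1^*(t,h_1(t)-\sep-x)=-\prx v_1(t,h_1(t))\,(\sep+x)+3M_1\epsilon+O\!\left((\sep+x)^{2}\right)
\]
uniformly on the effective integration range $x\in[0,\epsilon]$. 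Rescaling $(x,y)=(\epsilon s,\epsilon r)$ converts the integral of the linear term into $\epsilon^{3/2}/C_0+O(\epsilon^{2})$ via
\[
\int_0^1\!\int_{-1}^{0}J(s-r)\,\rd r\,\rd s=\int_0^1 yJ(y)\,\rd y=\frac{1}{C_0},
\]
which is exactly the identity \eqref{1.4} defining $C_0$. After multiplying by the prefactor $\mu C_0/\epsilon^{3/2}$, the leading contribution is $-\mu\prx v_1(t,h_1(t))$, while the $3M_1\epsilon$ correction and the quadratic Taylor remainder each produce at most $O(\sep)$. Comparing with $h_1'(t)=-\mu\prx v_1(t,h_1(t))+\epsilon^{\gamma_1}$ from \eqref{f4}, the desired inequality reduces to $O(\sep)\le\epsilon^{\gamma_1}$, which holds for small $\epsilon$ because $\gamma_1<\tfrac{1}{2}$ by \eqref{2.3}.

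For the $h_2$-inequality, the same procedure is applied to $v_2^*=v_2-\tilde m_\epsilon$. The corner of $m_\epsilon$ at $x=0$ lies well outside the effective integration range $[h_2-\sep-\epsilon,h_2-\sep]$ once $\epsilon$ is small, so $\tilde m_\epsilon$ is smooth there. Its Taylor expansion near $h_2$ together with the same scaling computation produces an additional contribution of the form $-\frac{2\mu K e^{L_1 t}}{h_2(t)}\epsilon^{\gamma_1}+O(\epsilon^{\gamma_1+1/2})$. The specific choice $K=h_0/(2\mu e^{2L_1 T_0})$ in Lemma~\ref{lemma3.1a}, combined with $h_2(t)\ge h_0$ (which follows from \eqref{g-h-mono}), keeps the coefficient of $\epsilon^{\gamma_1}$ bounded by $1$, i.e., strictly smaller than the $2\epsilon^{\gamma_1}$ slack available in $h_2'(t)=-\mu\prx v_2(t,h_2(t))-2\epsilon^{\gamma_1}$. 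Absorbing the $O(\sep)$ remainder into this slack gives the claimed inequality.

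The main obstacle is the error bookkeeping: every remainder must be controlled by the appropriate power of $\epsilon$, and the sign of the $\tilde m_\epsilon$-contribution must be compatible with the $-2\epsilon^{\gamma_1}$ perturbation of the Stefan condition in the $i=2$ problem. The constants in Lemma~\ref{lemma3.1a} and the range $\gamma_1\in(\gamma,\min\{\alpha,1/2\})$ from \eqref{2.3} are tailored precisely so that this bookkeeping goes through with room to spare.
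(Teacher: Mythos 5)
Your proposal is correct and follows essentially the same route as the paper: the change of variables behind \eqref{g'-h'}, a Taylor expansion of $v_i^*$ at $x=h_i(t)$, the identity $\int_0^1\int_{-1}^0 J(s-r)\,\rd r\,\rd s=\int_0^1 yJ(y)\,\rd y=C_0^{-1}$ from \eqref{1.4}, and absorption of the $O(\sqrt\epsilon)$ remainders into the $\epsilon^{\gamma_1}$ slack using $\gamma_1<\tfrac12$. The only (immaterial) difference is organizational: for $i=2$ the paper compares directly against the inequality $h_2'(t)\le-\mu\partial_x\widehat v_2(t,h_2(t))-\epsilon^{\gamma_1}$ already established in Lemma \ref{lemma3.1a}, whereas you re-expand the $\tilde m_\epsilon$ contribution and match it against the $-2\epsilon^{\gamma_1}$ term in \eqref{f4}, which amounts to the same bookkeeping.
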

\begin{proof}
Let us first recall  ${\rm spt} (J)\subset [-1,1]$, $J_{\epsilon}(\xi)=\frac{1}{{{\epsilon}}}J(\frac{\xi}{\epsilon})$ and ${\rm spt} (J_\epsilon)\subset [-\epsilon,\epsilon]$. 
By \eqref{g'-h'} we have 
\begin{align*}
&\mu\frac{C_0}{\epsilon^{3/2}} \int_{g_1(t)+\sqrt\epsilon}^{h_1(t)-\sqrt{\epsilon}}\int_{h_i(t)-\sqrt{\epsilon}}^{\yy}J_{\epsilon}(x-y)v_i^*(t,x)\rd y\rd x\\
=&\;\mu \frac{C_0}{\epsilon^{3/2}}\int_{-\epsilon}^0\int_0^\epsilon J_\epsilon(x-y)v_1^*(t, h_1(t)-\sqrt\epsilon+x)dydx\\
=&\;\mu\frac{C_0}{\sep}  \int_{-1}^{0}\int_{-1}^{w}J(z)v_1^*(t,h_1(t)-{\epsilon} w-\sqrt{\epsilon})\rd z \rd w\\
=&\;\mu\frac{C_0}{{\sep}} \int_{-1}^{0}\int_{-1}^{w}J(z)v_1(t,h_1(t)-{\epsilon} w-\sep)\rd z \rd w+{3\mu_1C_0M_1{\sep}}\int_{-1}^{0}\int_{-1}^{w}J(z)\rd z \rd w\\
=&\;\mu\frac{C_0}{\sep} \int_{-1}^{0}\int_{-1}^{w}J(z)\lf[(-{\epsilon}w-\sep)\prx v_1(t,h_1(t))+\epsilon\frac{(\sep w+1)^2}{2}\prxx v_1(t,h_1+\delta_1(t,w))\rr]\rd z \rd w\\
&+{3\mu C_0M_1{\sep}}\int_{-1}^{0}\int_{-1}^{w}J(z)\rd z \rd w\\
=&\;-\mu \prx v_1(t,h_1(t))+\mu C_0 \sep\int_{-1}^{0}\int_{-1}^{w}J(z) |w|\prx v_1(t,h_1(t))\rd z \rd w\\
&+\mu C_0\sep \int_{-1}^{0}\int_{-1}^{w}J(z)\frac{(\sep w+1)^2}{2}\prxx v_1(t,h_1+\delta_1(t,w))\rd z \rd w\\
&+{3\mu C_0M_1{\sep}}\int_{-1}^{0}\int_{-1}^{w}J(z)\rd z \rd w,
\end{align*}
where $\epsilon w-\sep\leq \delta_1(t,w)\leq 0$ for  $-1\leq w\leq 0$.

Thus, making use of \eqref{2.4a} and $\int_{-1}^{0}J(z)\rd z=\frac{1}{2}$, we get  %$\prx v_i(t,h_i(t))=\prx v_i^*(t,h_i(t))$, 
\begin{align*}
&\mu\frac{C_0}{\epsilon^{3/2}} \int_{g_1(t)+\sep}^{h_1(t)-\sep}\int_{h_1(t)-\sep}^{\yy}J_\epsilon(x-y)v_1^*(t,x)\rd y\rd x\\
\leq &-\mu \prx v_1(t,h_1(t))+\mu C_0M_1\sep \int_{-1}^{0}\int_{-1}^{w}J(z)[|w|+\frac{1}{2}(\sep w+1)^2]\rd z \rd w\\
&+{3\mu C_0M_1{\sep}}\int_{-1}^{0}\int_{-1}^{w}J(z)\rd z \rd w\\
\leq &-\mu \prx v_1(t,h_1(t))+\mu C_0M_1\sep \int_{-1}^{0}\int_{-1}^{w}J(z)(1+2)\rd z \rd w\\
&+{3\mu C_0M_1{\sep}}\int_{-1}^{0}\int_{-1}^{w}J(z)\rd z \rd w\\
\leq &-\mu \prx v_1(t,h_1(t))+6\mu C_0M_1\sep \int_{-1}^{0}\int_{-1}^{0}J(z)\rd z \rd w\\
= &-\mu \prx v_1(t,h_1(t))+3\mu C_0M_1\sep\\
\leq& -\mu \prx v_1(t,h_1(t))+{\epsilon}^{\gamma_1}= h_1'(t)\ \ \ 
 \ \  \mbox{  for all small }\ \epsilon>0.
\end{align*}

By similar calculations and the fourth inequality of \eqref{3.1b}, we deduce
\begin{align*}
&\mu\frac{C_0}{\epsilon^{3/2}} \int_{g_2(t)+\sep}^{h_2(t)-\sep}\int_{h_2(t)-\sep }^{\yy}J_\epsilon(x-y)v_2^*(t,x)\rd y\rd x\\
\geq  &-\mu \prx \wht v_2(t,h_2(t))-3{\mu C_0}M_1 \sep\int_{-1}^{0}\int_{-1}^{0}J(z){w^2}\rd z \rd w\\
\geq &-\mu \prx \wht v_2(t,h_2(t))-3{\mu C_0}M_1\sep\\
\geq& -\mu \prx \wht v_2(t,h_2(t)) -{\epsilon}^{\gamma_1}\geq  h_2'(t)\ \ \ 
\ \  \mbox{ for all small }\ \epsilon>0.
\end{align*}
Therefore,  \eqref{3.2}  holds.  
\end{proof}

Analogously we can prove
\begin{lemma}\label{lem3.5}
For all small $\epsilon>0$, 
	\begin{equation}\label{3.12}
\begin{cases}
\dd g_1'(t)\leq -\mu \frac{C_0}{\epsilon^{3/2}} \int_{g_1(t)+\sep}^{h_1(t)-\sep}\int_{-\yy}^{g_1(t)+\sep}J_{\epsilon}(x-y)v_1^*(t,x)\rd y\rd x,\\[3mm]
\dd g_2'(t)\geq  -\mu\frac{C_0}{\epsilon^{3/2}} \int_{g_2(t)+\sep}^{h_2(t)-\sep}\int_{-\yy}^{g_2(t)+\sep}J_{\epsilon}(x-y)v_2^*(t,x)\rd y\rd x.
\end{cases}
\end{equation}
\end{lemma}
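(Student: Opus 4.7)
My plan is to mirror the proof of Lemma~\ref{lem3.4} term by term, interchanging the role of the right boundary $h_i(t)$ with that of the left boundary $g_i(t)$. Since $J$ is even, the reflection $x\mapsto -x$ carries the $g$-side integrals into the $h$-side ones, so a symmetry argument would in fact suffice; for transparency of all signs I prefer a direct computation following the same template.

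For $i=1,2$, starting from the identity
\[
\int_{g_i(t)+\sqrt\epsilon}^{h_i(t)-\sqrt\epsilon}\!\!\int_{-\infty}^{g_i(t)+\sqrt\epsilon}\!J_\epsilon(x-y)v_i^*(t,x)\rd y\rd x
=\int_0^\epsilon\!\!\int_{-\epsilon}^0 J_\epsilon(x-y)v_i^*(t,g_i(t)+\sqrt\epsilon+x)\rd y\rd x
\]
recorded in the displays preceding \eqref{g'-h'}, I would rescale via $x=\epsilon w$, $y=\epsilon z'$ and then set $z=w-z'$, which converts the quantity on the right of \eqref{3.12} into
\[
\mu\frac{C_0}{\sqrt\epsilon}\int_0^1\!\int_w^1 J(z)\,v_i^*(t,g_i(t)+\sqrt\epsilon+\epsilon w)\rd z\rd w,
\]
the exact analogue of the first reduction performed in Lemma~\ref{lem3.4}, with the $(w,z)$-region $\{0\le w\le z\le 1\}$ in place of $\{-1\le z\le w\le 0\}$.

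Next I would substitute $v_1^*=v_1+3M_1\epsilon$ (resp.\ $v_2^*=\widehat v_2$) and Taylor-expand $v_1$ (resp.\ $\widehat v_2$) at $x=g_i(t)$, using $v_i(t,g_i(t))=0$ together with $m_\epsilon(t,g_2(t))=0$ (which forces $\widehat v_2(t,g_2(t))=0$) to kill the constant term. The identity $C_0\int_0^1\!\int_w^1 J(z)\rd z\rd w=1$ built into \eqref{1.4} then makes the first-order contribution equal to $\mu\prx v_1(t,g_1(t))$ (respectively $\mu\prx\widehat v_2(t,g_2(t))$). The remaining three pieces -- the $3M_1\epsilon$ correction, the skew first-moment term $\sqrt\epsilon\int_0^1\!\int_w^1 wJ(z)\rd z\rd w$, and the second-order Taylor remainder of size $(\sqrt\epsilon+\epsilon w)^2/\sqrt\epsilon$ -- are each $O(\sqrt\epsilon)$ uniformly on $[0,T_0]$ by the $C^{2+\alpha}$ bounds \eqref{2.4a} and \eqref{3.1a}. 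Combining these with $g_1'(t)=-\mu\prx v_1(t,g_1(t))-\epsilon^{\gamma_1}$ from \eqref{f4} and with $g_2'(t)\ge -\mu\prx\widehat v_2(t,g_2(t))+\epsilon^{\gamma_1}$ from \eqref{3.1b}, the two inequalities in \eqref{3.12} follow as soon as $\epsilon^{\gamma_1}$ absorbs the $O(\sqrt\epsilon)$ error; this is automatic for all small $\epsilon$ because $\gamma_1<\tfrac12$ by \eqref{2.3}.

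The only point that requires any care -- and thus the step I would expect to double-check -- is the $i=2$ case, where $\widehat v_2=v_2-\tilde m_\epsilon$ is only $C^1$ at $x=0$ and so the Taylor expansion is not a priori legitimate. However, the argument $g_2(t)+\sqrt\epsilon+\epsilon w$ fed into the integrand stays in $[g_2(t)+\sqrt\epsilon,g_2(t)+\sqrt\epsilon+\epsilon]$, which for small $\epsilon$ is strictly to the left of $0$ uniformly in $t\in[0,T_0]$ in view of \eqref{g-h-mono}; on this sub-interval $\widehat v_2$ inherits the full $C^{2+\alpha}$ regularity of $v_2$, so the Taylor expansion proceeds exactly as in Lemma~\ref{lem3.4} and no further modification is needed.
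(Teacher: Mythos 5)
Your proposal is correct and is exactly the argument the paper intends: the paper proves Lemma \ref{lem3.5} only by the phrase ``Analogously we can prove,'' i.e.\ by mirroring the computation of Lemma \ref{lem3.4} at the left boundary, which is precisely what you carry out (same rescaling, same use of the normalization $C_0\int_0^1\int_w^1J(z)\,\rd z\,\rd w=1$ from \eqref{1.4}, same $O(\sqrt\epsilon)$ bookkeeping absorbed by $\epsilon^{\gamma_1}$ since $\gamma_1<\tfrac12$). Your extra remark that the Taylor expansion of $\widehat v_2$ is legitimate because $g_2(t)+\sqrt\epsilon+\epsilon w\le -h_0+\sqrt\epsilon+\epsilon<0$ keeps the argument away from the $C^1$-corner at $x=0$ is a correct and worthwhile verification.
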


We are now ready to complete the proof of Proposition \ref{lemma3.1}. 
We note that
\begin{align*}
&v_1^*(t,x)=3M_1\epsilon>0,&& t\in (0,T_0],\; x\in\{g_1(t),h_1(t)\},\\
&v_2^*(t,x)=v_2(x)-\tilde m_\epsilon (t,x)=0,&& t\in (0,T_0], \; x\in\{g_2(t), h_2(t)\},
\end{align*}
and 
\begin{align*}
&v_1^*(0,x)=v_1(0,x)+3M_1\epsilon=v_0(x)+3M_1\epsilon\geq  v_0(x)=u_\epsilon(0,x), &&x\in [-h_0,h_0],\\
&v_2^*(0,x)=v_2(0,x)-m_1(0,x)\leq v_0(x)=u_\epsilon(0,x), &&x\in [-h_0,h_0].
\end{align*}
Hence, in view of the inequalities proved in the previous three lemmas, we can  conclude that $(v_1^*,g_1,h_1)$ (resp.   $(v_2^*,g_2,h_2)$ ) is an upper (resp. a  lower) solution of \eqref{f3}.  Now  the comparison principle in Theorem \ref{lemma2.1} (i), combined with \eqref{3.1}, yields the desired conclusions in \eqref{ineq3.2} with
\[
\tilde M_1:=K e^{L_1T_0}=\frac{h_0}{2\mu e^{L_1T_0}}.
\]

\section{Proof of Theorem \ref{theorem1.2}} In this section, we prove Theorem \ref{theorem1.2}. A crucial step in the proof is to construct an upper solution $(V_{1\epsilon}, G_{1\epsilon}, H_{1\epsilon})$ of \eqref{f4} with $i=1$ by modifying the solution $(v_{2\epsilon}, g_{2\epsilon}, h_{2\epsilon})$ of \eqref{f4} with $i=2$, so that $|V_{1\epsilon}-v_{2\epsilon}|+|G_{1\epsilon}-g_{2\epsilon}|+|H_{1\epsilon}-h_{2\epsilon}|$ is bounded by  {  $C\epsilon^{\gamma_1}$ for some $C>0$}.

To construct $(V_{1\epsilon}, G_{1\epsilon}, H_{1\epsilon})$, for positive constants $\xi_1$ and $\xi_2$, we
 define 
 \[
 \phi(t)=\phi_\epsilon(t):=(1+\xi_2 e^{\xi_1t}\epsilon^{\gamma_1})t,
 \]
  { where $\gamma_1$ is given by \eqref{2.3}}.
  Then for sufficiently large $\xi_1,\;\xi_2, \;\xi_3$ and all small $\epsilon>0$, define
\begin{align*} 
&G_{1\epsilon}(t):=g_{2\epsilon}(\phi(t)), \ \  H_{1\epsilon}(t):=h_{2\epsilon}(\phi(t))& \ \ \  &{\rm for}\ 0<t<\wtd T,\ \\
&V_{1\epsilon}(t,x)=v_{2\epsilon}(\phi(t),x)+\epsilon^{\gamma_1} M_\epsilon(t,x)& \ &{\rm for}\ t\in [0,\wtd T],\; x\in [G_{1\epsilon}(t),H_{1\epsilon}(t)],
\end{align*}
where $(v_{2\epsilon},g_{2\epsilon},h_{2\epsilon})$ is the solution of \eqref{f4} with $i=2$ and $T_0=2T>0$,   $\wtd T>0$ is  uniquely determined by $\phi(\wtd T)=2T$, and
\[
M_\epsilon(t,x):= (\xi_1t+\xi_3)e^{\xi_1 t}m_\epsilon(t,x; G_{1\epsilon}, H_{1\epsilon})= (\xi_1t+\xi_3)e^{\xi_1 t}m_\epsilon(\phi(t),x; g_{2\epsilon}, h_{2\epsilon}),
\]
with  $m_\epsilon(t,x; G_{1\epsilon}, H_{1\epsilon})$ given by \eqref{m} with obvious modifications. Clearly, $M_\epsilon\in C^{1+\alpha/2,1}(\ol \Omega_*)$, where $\Omega_*=\{(t,x): t\in (0,\wtd T],\; x\in (G_{1\epsilon}, H_{1\epsilon})\}$.

\begin{proposition}\label{lemma3.4} Suppose $\mathbf{J}$, $(\mathbf{f_1})$, $(\mathbf{f_2})$ and $(\mathbf{f_4})$ hold, and $u_0$ satisfies \eqref{1.3a}. 
Then there exist $\xi_1,\ \xi_2,\ \xi_3$  large and $\epsilon^*>0$ small such that when  $\epsilon\in (0,\epsilon^*)$,
 the above defined triple  $(V_{1\epsilon}, G_{1\epsilon}, H_{1\epsilon})$  is a weak upper solution\footnote{This is a classical upper solution except on the line $\{(t,0): t>0\}$ where $\partial_xV_{1\epsilon}$ exists and is continuous. Therefore the usual comparison principle holds as for the case of classical upper solutions.}of \eqref{f4} with $i=1$ and $T_0=\tilde T$.
\end{proposition}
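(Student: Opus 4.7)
My plan is to check the four defining inequalities of an upper solution of \eqref{f4} with $i = 1$ on $(V_{1\epsilon}, G_{1\epsilon}, H_{1\epsilon})$: initial data, lateral boundary values, the parabolic differential inequality (classical off the line $\{x=0\}$), and the two Stefan slope inequalities. The lateral boundary values vanish identically because $M_\epsilon=(\xi_1 t+\xi_3)e^{\xi_1 t}m_\epsilon(\phi(t),x;g_{2\epsilon},h_{2\epsilon})$ inherits the vanishing of $m_\epsilon$ at its endpoints, and the initial inequality $V_{1\epsilon}(0,x)=v_0(x)+\epsilon^{\gamma_1}\xi_3 m_\epsilon(0,x;-h_0,h_0)\geq v_0(x)$ is immediate from nonnegativity of $m_\epsilon$.

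For the PDE, using $\prt v_{2\epsilon}(\phi,x)=d\prxx v_{2\epsilon}(\phi,x)+f(\phi,x,v_{2\epsilon})$ (since $A_2=0$), the residual becomes
\begin{equation*}
\mathcal{R}:=\prt V_{1\epsilon}-d\prxx V_{1\epsilon}-f(t,x,V_{1\epsilon})-\epsilon^{\gamma_1}=(\phi'-1)\prt v_{2\epsilon}+\bigl[\phi'f(\phi,x,v_{2\epsilon})-f(t,x,V_{1\epsilon})\bigr]+\epsilon^{\gamma_1}(\prt M_\epsilon-d\prxx M_\epsilon)-\epsilon^{\gamma_1}.
\end{equation*}
Decomposing the $f$-bracket into the $(\phi'-1)$, time-shift, and $u$-shift pieces and using \eqref{2.6}, the full Lipschitz regularity from $(\mathbf{f_4})$, and $|V_{1\epsilon}-v_{2\epsilon}|=\epsilon^{\gamma_1}M_\epsilon$, each error term is $O(\epsilon^{\gamma_1})$ with constants linear in $\xi_2$ or $\xi_3$ and exponential in $e^{\xi_1 T}$. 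The source of positivity is
\begin{equation*}
\prt M_\epsilon-d\prxx M_\epsilon\geq\bigl[\xi_1+\xi_1(\xi_1 t+\xi_3)\bigr]e^{\xi_1 t}\tilde m+\frac{2d(\xi_1 t+\xi_3)e^{\xi_1 t}}{M_1^2},
\end{equation*}
where $\tilde m:=m_\epsilon(\phi(t),x;g_{2\epsilon},h_{2\epsilon})$, obtained from $\partial_s\tilde m\geq 0$ (Remark \ref{rm2.5}) and the explicit quadratic form of $m_\epsilon$. Choosing $\xi_1\geq L_0$ makes the $\tilde m$-coefficient absorb the Lipschitz-in-$u$ error $L_0\epsilon^{\gamma_1}(\xi_1 t+\xi_3)e^{\xi_1 t}\tilde m$; the $x$-uniform lower bound $2d\xi_3/M_1^2$ must then dominate the remaining (non-$\tilde m$) errors, which are controlled by $C(\xi_1,T)\xi_2$ plus lower-order constants.

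For the Stefan inequality at $x=H_{1\epsilon}(t)$, I combine $H_{1\epsilon}'(t)=\phi'(t)h_{2\epsilon}'(\phi(t))$ with $h_{2\epsilon}'(s)=-\mu\prx v_{2\epsilon}(s,h_{2\epsilon}(s))-2\epsilon^{\gamma_1}$ (the $i=2$ Stefan equation) and the explicit slope $\prx M_\epsilon(t,H_{1\epsilon}(t))=-2(\xi_1 t+\xi_3)e^{\xi_1 t}/h_{2\epsilon}(\phi(t))$ to reduce the residual to
\begin{equation*}
(\phi'(t)-1)A(t)-2\phi'(t)\epsilon^{\gamma_1}-\frac{2\mu\epsilon^{\gamma_1}(\xi_1 t+\xi_3)e^{\xi_1 t}}{h_{2\epsilon}(\phi(t))}-\epsilon^{\gamma_1},
\end{equation*}
with $A(t):=-\mu\prx v_{2\epsilon}(\phi,h_{2\epsilon}(\phi))\geq A_*>0$ by the Hopf lemma applied uniformly in $\epsilon$ (using \eqref{2.6} and $v_0'(\pm h_0)\neq 0$). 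Since $\phi'(t)-1=\xi_2(1+\xi_1 t)e^{\xi_1 t}\epsilon^{\gamma_1}$, choosing $\xi_2$ sufficiently large in terms of $\xi_3$ makes this nonnegative; the left endpoint is symmetric.

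The main obstacle is the circular dependence between the parameters: the PDE step forces $\xi_3\gtrsim\xi_2 e^{\xi_1 T}$ while the Stefan step forces $\xi_2\gtrsim\xi_3 e^{\xi_1 T}$. I would handle this by first fixing $\xi_1$ just above $L_0$ so that the product of the two governing coefficients, computed from $2d/M_1^2$ and $A_*h_0/(2\mu)$, is strictly less than one, guaranteeing that the resulting $2\times 2$ linear system in $(\xi_2,\xi_3)$ admits a positive solution; $\xi_2$ and $\xi_3$ are then chosen accordingly, and $\epsilon^*>0$ is shrunk to justify the linearizations (for instance $\phi'(t)\leq 2$ and the uniform Hopf lower bound) over $[0,\tilde T]$. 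Since the differential inequality is classical away from $\{x=0\}$, this suffices for the weak-solution comparison principle referenced in the footnote.
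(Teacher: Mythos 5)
Your overall architecture (verifying the four inequalities for the pre-defined triple, drawing positivity from $\prt M_\epsilon-d\prxx M_\epsilon$ and from a uniform Hopf-type bound on $-\prx v_{2\epsilon}$ at the moving boundary) matches the paper's, but there is a genuine gap exactly at the point you call ``the main obstacle'', and your proposed resolution fails. The circularity arises in your argument because you treat the reparametrization error $(\phi'(t)-1)\,\partial_s v_{2\epsilon}(\phi(t),x)$ (and the time-shift error $L|\phi(t)-t|$) as generic errors of size $C\xi_2 e^{\xi_1 t}\epsilon^{\gamma_1}$, not proportional to $\tilde m$, and then ask the single $x$-uniform positive term $2d(\xi_1t+\xi_3)e^{\xi_1 t}/M_2^2$ to absorb them near the free boundary where $\tilde m$ vanishes; this genuinely forces $\xi_3\gtrsim\xi_2$, while the Stefan inequality forces $\xi_2\gtrsim\xi_3$. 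The coefficients in these two constraints are fixed constants of the problem (of the type $M_2^3/(2d)$ and $2/(h_0k_2)$) which do not depend on $\xi_1$, so ``fixing $\xi_1$ just above $L_0$'' cannot make their product less than one — and the extra $e^{\xi_1T}$ factors in your own accounting (which are in fact spurious, since the competing terms in both the PDE and the Stefan steps all carry the same $e^{\xi_1 t}$) only make the product larger. The $2\times2$ system need not admit a positive solution, and the proof stalls.

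The paper breaks the circle with a structural estimate you are missing (Lemma \ref{lemma3.3}(i)): using the $\partial_{tx}v_{2\epsilon}$ bound from \eqref{2.6} together with $\prt v_{2\epsilon}\geq 0$ \emph{on} the moving boundary (because $v_{2\epsilon}$ vanishes there and $h_{2\epsilon}'>0>g_{2\epsilon}'$), one proves $\prt v_{2\epsilon}(t,x)\geq -k_1\, m_\epsilon(t,x)$. Consequently the dangerous term $\frac{\phi'-1}{\phi'}\td v_t$ is bounded below by $-2k_1\xi_2 M_\epsilon\,\epsilon^{\gamma_1}$, i.e.\ it is proportional to $M_\epsilon$ and is absorbed by the $\xi_1M_\epsilon\epsilon^{\gamma_1}$ part of $\prt M_\epsilon$ once $\xi_1>2k_1\xi_2+L$; likewise the time-shift error $L\xi_2te^{\xi_1t}\epsilon^{\gamma_1}$, being proportional to $t$, is absorbed by the $\frac{2d}{M_2^2}\xi_1te^{\xi_1t}\epsilon^{\gamma_1}$ part of $-d\prxx M_\epsilon$ once $\xi_1>\frac{LM_2^2}{2d}\xi_2$. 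After this, $\xi_3$ need only exceed the fixed constant $M_2^2/(2d)$, and the selection order is a linear chain rather than a loop: fix $\xi_3$, then choose $\xi_2$ in terms of $\xi_3$ via the Stefan step (\eqref{xi-23}), then choose $\xi_1$ in terms of $\xi_2$ (\eqref{xi-123}). You need to prove and insert the analogue of Lemma \ref{lemma3.3}(i) — which is also the one place where the stronger hypotheses \eqref{2.6}, hence $(\mathbf{f_4})$ and \eqref{1.3a}, are genuinely used — and redo the bookkeeping so that every $\tilde m$-free negative term is either $O(1)$ in $(\xi_2,\xi_3)$ or proportional to $t$.
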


Before giving the proof of Proposition \ref{lemma3.4}, let us see how it is used to prove Theorem \ref{theorem1.2}.
\smallskip

{\bf Proof of Theorem \ref{theorem1.2}} (assuming Proposition \ref{lemma3.4}):  It follows from Proposition \ref{lemma3.1} and Proposition \ref{lemma3.4} that 
	\begin{equation}\label{3.8}
	\begin{cases}
	[g_{2\epsilon}(t), h_{2\epsilon}(t)]\subset [g_\epsilon(t),h_\epsilon(t)]\subset [g_{1\epsilon}(t), h_{1\epsilon}(t)],\ &t\in [0,2T],\\
		 u_\epsilon(t,x )\geq  v_{2\epsilon}(t,x)-\tilde M_1 \epsilon^{\gamma_1},\ &t\in [0,2T],\;x\in [g_{2\epsilon}(t),h_{2\epsilon}(t)],\\
	 u_\epsilon(t,x )\leq v_{1\epsilon}(t,x)+3M_1\epsilon,\ &t\in [0,2T],\;x\in [g_\epsilon(t),h_\epsilon(t)],
	\end{cases}
	\end{equation}
 and
 \begin{equation}\label{3.9}
 \begin{cases}
 [g_{1\epsilon}(t),h_{1\epsilon}(t)]\subset \lf[G_{1\epsilon}(t), H_{1\epsilon}(t)\rr], &t\in [0,\wtd T],\\
  v_{1\epsilon}(t,x )\leq V_{1\epsilon}(t,x), &t\in [0,\wtd T],\;x\in [g_{1\epsilon}(t),h_{1\epsilon}(t)].
 \end{cases}
 \end{equation}
 
 {\bf Claim 1}.  For sufficiently small $\epsilon>0$,  we have $T< \wtd T< 2T$ and  
 \begin{align}\label{3.22}
|h_{\epsilon}(t)-h(t)|,\ |g_{\epsilon}(t)-g(t)|\leq  K\epsilon^{\gamma_1}<\epsilon^{\gamma},\ \ \ \  \ t\in [0,T].
 \end{align}
 where {$K=M_2\xi_2 Te^{\xi_1 T}$},   and $(v,g,h)$ is the solution of \eqref{f2}. 
 
 Recall that $\wtd T$ satisfies $\wtd T+\xi_2\wtd Te^{\xi_1 \wtd T}\epsilon^{\gamma_1}=2T$. Then clearly $\wtd T<2T$, and for small $\epsilon>0$,
 \bess
 \wtd T- T=2T-\xi_2\wtd Te^{\xi_1 \wtd T}\epsilon^{\gamma_1} -T\geq T-\xi_2 2Te^{\xi_1 2 T}\epsilon^{\gamma_1}>0.
 \eess
 Taking advantages of  \eqref{2.6}, \eqref{2.5},  \eqref{3.8} and \eqref{3.9},    we deduce for $t\in [0, T]$, 
  \begin{align*}
 |h_{\epsilon}(t)-h(t)|\leq h_{1\epsilon}(t)-h_{2\epsilon}(t)=&\;h_{1\epsilon}(t)-H_{1\epsilon}(t)+h_{2\epsilon}(\phi(t))-h_{2\epsilon}(t)\\
 \leq &\;h_{2\epsilon}(\phi(t))-h_{2\epsilon}(t)\leq ||h_{2\epsilon}'||_{C([0,2T])}|\phi(t)-t|\\
 \leq&\; ||h_{2\epsilon}'||_{C([0,2T])}\xi_2 Te^{\xi_1 T}\epsilon^{\gamma_1}\leq M_2\xi_2 Te^{\xi_1 T}\epsilon^{\gamma_1}\\
 =&\; K\epsilon^{\gamma_1}<\epsilon^{\gamma}
 \ \ \ \ \ \ \ {\rm if}\ 0<\epsilon\ll 1,
 \end{align*}
 and similarly $|g_{\epsilon}(t)-g(t)|\leq  K \epsilon^{\gamma_1}$ for such $\epsilon$. Hence \eqref{3.22} holds.   
 
 Moreover, the above calculations also imply that for $t\in [0,T]$ and $i=1,2$, 
  \begin{align*}
 |h_{i\epsilon}(t)-h(t)|, |g_{i\epsilon}(t)-g(t)|\leq  K\epsilon^{\gamma_1}\  \ \ {\rm if}\ 0<\epsilon\ll 1,
 \end{align*}
 and so, for such $t$, $\epsilon$ and $i=1,2$,  
  \begin{align}\label{3.16}
 [g(t)+K\epsilon^{\gamma_1},h(t)-K\epsilon^{\gamma_1}]\subset [g_\epsilon(t),h_\epsilon(t)]\cap [g_{i\epsilon}(t),h_{i\epsilon}(t)].
 \end{align}

 {\bf Claim 2}. For sufficiently small $\epsilon>0$,  the following estimate holds:
 \begin{align*}
 |u_\epsilon(t,x)-v(t,x)|<\epsilon^{\gamma} \ \mbox{ for }  \ t\in [0,T],\; x\in [g(t)+K\epsilon^{\gamma_1},h(t)-K\epsilon^{\gamma_1}].
 \end{align*}

 By \eqref{2.5}  and \eqref{3.8},  for all small $\epsilon>0$ and $t\in [0,T],\; x\in [g_{2\epsilon}(t),h_{2\epsilon}(t)]$,
  \begin{align*}
 v(t,x)\in [v_{2\epsilon}(t,x),v_{1\epsilon}(t,x)],\ \  u_\epsilon(t,x)\in [v_{2\epsilon}(t,x)-\tilde M_1\epsilon^{\gamma_1},v_{1\epsilon}(t,x)+3M_1\epsilon].
 \end{align*}
 Hence, by  \eqref{3.16}, for all small $\epsilon>0$ and $ \ t\in [0,T],\; x\in [g(t)+K\epsilon^{\gamma_1},h(t)-K\epsilon^{\gamma_1}]$,  we have
  \begin{align}\label{3.23}
 v(t,x),\ u_\epsilon(t,x)\in [v_{2\epsilon}(t,x)-\tilde M_1\epsilon^{\gamma_1},v_{1\epsilon}(t,x)+3M_1\epsilon].
 \end{align}
We may now make use of   \eqref{2.6}, \eqref{3.9} and \eqref{3.16} to conclude that, for all small $\epsilon>0$,
 \begin{align*}
 |u_\epsilon(t,x)-v(t,x)|\leq &\; v_{1\epsilon}(t,x)-v_{2\epsilon}(t,x)+\tilde M_1\epsilon^{\gamma_1}+3M_1\epsilon \\
 \leq &\; V_{1\epsilon}(t,x)-v_{2\epsilon}(t,x)+\tilde M_1\epsilon^{\gamma_1}+3M_1\epsilon \\
 = &\; v_{2\epsilon}(\phi(t),x) -v_{2\epsilon}(t,x)+M_\epsilon(t,x) \epsilon^{\gamma_1}+\tilde M_1\epsilon^{\gamma_1}+3M_1\epsilon \\
 \leq &\; \frac 12 K\epsilon^{\gamma_1}+\|\prt v_{2\epsilon}\|_{L^\yy} |\phi(t)-t| \\
 \leq&\; \frac 12 K\epsilon^{\gamma_1}+M_2\xi_2 Te^{\xi_1 T} \epsilon^{\gamma_1}\\
 \leq&\; \epsilon^{\gamma}\ \ \mbox{ for } \ t\in [0,T],\; x\in[g(t)+K\epsilon^{\gamma_1},h(t)-K\epsilon^{\gamma_1}].
 \end{align*}
  
{\bf Claim 3}. For sufficiently small $\epsilon>0$,  we have
\begin{align*}
|u_\epsilon(t,x)-v(t,x)|<\epsilon^{\gamma} \ \mbox{ for }  \ t\in [0,T],\; x\in \R,
\end{align*}
where $v(t,x)=0$ for  $x\in \R\setminus (g(t),h(t))$ and $u_{\epsilon}(t,x)=0$ for  $x\in \R\setminus (g_\epsilon(t),h_\epsilon(t))$. 

From  \eqref{2.5} and \eqref{3.8}, we see that $[g(t),h(t)]\cup [g_\epsilon(t),h_\epsilon(t)]\subset [g_{1\epsilon}(t),h_{1\epsilon}(t)]$.  Hence in view of Claim 2, we just need to consider the estimate of $|u_\epsilon(t,x)-v(t,x)|$ for $x\in [g_{1\epsilon}(t),h_{1\epsilon}(t)]\setminus[g(t)+K\epsilon^{\gamma_1},h(t)-K\epsilon^{\gamma_1}]$. 

Clearly \eqref{3.23} holds also for $x\in [g_{1\epsilon}(t),h_{1\epsilon}(t)]$  since by our convention $v_{2\epsilon}(t,x)=0$ for  $x\in \R\setminus (g_{2\epsilon}(t),h_{2\epsilon}(t))$. Hence  for $x\in [g_{1\epsilon}(t),h_{1\epsilon}(t)]\setminus[g(t)+K\epsilon^{\gamma_1},h(t)-K\epsilon^{\gamma_1}]$, $t\in [0, T]$ and $0<\epsilon\ll 1$, we have
 \begin{align*}
|u_\epsilon(t,x)-v(t,x)|\leq& v_{1\epsilon}(t,x)-v_{2\epsilon}(t,x)+\epsilon^{\gamma_1}.
\end{align*}
Moreover, taking advantages of \eqref{2.6} and { $h(t)-K\epsilon^{\gamma_1} \leq h_{2\epsilon}\leq h_{1\epsilon}$}, we deduce for  such $t$, $\epsilon$ and $x\in [h(t)-K\epsilon^{\gamma_1},h_{1\epsilon}(t)]$, 
  \begin{align*}
  |u_\epsilon(t,x)-v(t,x)|\leq& \; [v_{1\epsilon}(t,x)-v_{1\epsilon}(t,h_{1\epsilon}(t))]-[v_{2\epsilon}(t,x)-v_{2\epsilon}(t,h_{2\epsilon}(t))]+\epsilon^{\gamma_1}\\
  \leq &\; \|\prx v_{1\epsilon}(t,\cdot)\|_{L^\yy} |x-h_{1\epsilon}(t)|+\|\prx v_{2\epsilon}(t,\cdot)\|_{L^\yy} |x-h_{2\epsilon}(t)|+\epsilon^{\gamma_1}\\
\leq &\;   {[\|\prx v_{1\epsilon}(t,\cdot)\|_{L^\yy} +\|\prx v_{2\epsilon}(t,\cdot)\|_{L^\yy}] [h_{1\epsilon}(t)-h(t)+K\epsilon^{\gamma_1}]+\epsilon^{\gamma_1}}\\
  \leq&\; 4M_2 K\epsilon^{\gamma_1}+  \epsilon^{\gamma_1}\\
  \leq & \; \epsilon^{\gamma}.
  \end{align*}
   Similarly we can show  $|u_\epsilon(t,x)-v(t,x)|\leq \epsilon^{\gamma}$ for $x\in [g_{1\epsilon}(t), g(t)+K\epsilon^{\gamma_1}]$ and $t\in [0, T]$, $0<\epsilon\ll 1$. 
  $\hfill \Box$
\medskip

We now proceed to the proof of Proposition \ref{lemma3.4}. The following technical lemma will be needed.

\begin{lemma}\label{lemma3.3} Suppose the conditions of Proposition \ref{lemma3.4} hold.  
Then $(v_{2\epsilon},g_{2\epsilon},h_{2\epsilon})$, the solution of \eqref{f4} with $i=2$, has the following properties:
	\begin{itemize}
		\item[{\rm (i)}]	There exists $k_1>0$ such that for all small $\epsilon>0$,
		\begin{align}\label{3.9a}
		\prt v_{2\epsilon}(t,x)\geq -{k_1} m_\epsilon(t,x; g_{2\epsilon}, h_{2\epsilon})\ \ \ \ {\rm for}\ t\in [0,T_0],\; 
		x\in [ g_{2\epsilon}(t), h_{2\epsilon}(t)],
		\end{align}
		where  the function $m_\epsilon$ is defined in \eqref{m}. 
		\item[{\rm (ii)}] 	There exists $k_2>0$ such that
		 for all  small  $\epsilon>0$,
	\begin{align}\label{3.10}
		\prx v_{2\epsilon}(t, g_{2\epsilon}(t)) \geq  k_2, \ \ 	 \prx v_{2\epsilon}(t, h_{2\epsilon}(t))\leq - k_2 \ \ \ {\rm for }\ t\in[0,T_0].
	\end{align}
	\end{itemize}
\end{lemma}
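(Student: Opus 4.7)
The plan is to prove parts (i) and (ii) separately, both making essential use of the uniform $C^{(3+\alpha)/2,3+\alpha}$ regularity of $v_{2\epsilon}$ supplied by Theorem \ref{theorem2.2}(ii) (available here because Proposition \ref{lemma3.4} assumes $(\mathbf{f_4})$ and \eqref{1.3a}).

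For part (ii), I would first establish the Hopf boundary bounds for the limit problem. Since $v_0(\pm h_0)=0$, $v_0>0$ on $(-h_0,h_0)$, and $v_0'(\pm h_0)\neq 0$, the initial slopes automatically satisfy $v_0'(-h_0)>0>v_0'(h_0)$. For $t>0$, writing $f(t,x,v)=c(t,x)v$ with $c(t,x):=\int_0^1 f_u(t,x,sv(t,x))\,ds$ bounded, the Hopf boundary lemma applied to the limit solution $(v,g,h)$ of \eqref{f2} yields $\prx v(t,g(t))>0$ and $\prx v(t,h(t))<0$ on $(0,T_0]$. Since $v\in C^{1+\alpha/2,2+\alpha}$ (Theorem \ref{theorem2.2} at $\epsilon=0$), these boundary derivatives are continuous up to $t=0$ and hence bounded away from zero on $[0,T_0]$ by some constant $2k_2>0$. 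Combining the convergence \eqref{2.6a} with the uniform $C^{2+\alpha}$ estimate \eqref{2.4a}, after straightening each $[g_{2\epsilon}(t),h_{2\epsilon}(t)]$ to a fixed reference interval, one obtains uniform convergence of $\prx v_{2\epsilon}(t,h_{2\epsilon}(t))$ to $\prx v(t,h(t))$ on $[0,T_0]$, and similarly at the left endpoint; for $\epsilon$ small, \eqref{3.10} follows.

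For part (i), set $w(t,x):=\prt v_{2\epsilon}(t,x)$. Differentiating $v_{2\epsilon}(t,h_{2\epsilon}(t))\equiv 0$ in $t$ gives $w(t,h_{2\epsilon}(t))=-h_{2\epsilon}'(t)\prx v_{2\epsilon}(t,h_{2\epsilon}(t))$. From part (ii) and the free-boundary equation $h_{2\epsilon}'(t)=-\mu\prx v_{2\epsilon}(t,h_{2\epsilon}(t))-2\epsilon^{\gamma_1}$, we obtain $h_{2\epsilon}'(t)\geq\mu k_2/2$ for small $\epsilon$ while $\prx v_{2\epsilon}(t,h_{2\epsilon}(t))\leq -k_2$, so $w(t,h_{2\epsilon}(t))\geq\mu k_2^2/2$ uniformly; the analogous calculation at the left endpoint gives $w(t,g_{2\epsilon}(t))\geq\mu k_2^2/2$. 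By Theorem \ref{theorem2.2}(ii), $w$ is uniformly bounded in $C^{(1+\alpha)/2,1+\alpha}(\ol\Omega_{2\epsilon})$; after straightening, the uniform modulus of continuity of $w$ produces a $\delta_0>0$ independent of $\epsilon$ such that $w(t,x)\geq\mu k_2^2/4$ whenever $x$ is within $\delta_0$ of $\{g_{2\epsilon}(t),h_{2\epsilon}(t)\}$, and on this strip $w\geq 0\geq -k_1 m_\epsilon$ holds trivially. On the complementary interior region, \eqref{m} together with $|g_{2\epsilon}|,h_{2\epsilon}\leq M_1$ yields a uniform lower bound $m_\epsilon(t,x)\geq m_0>0$, while $|w|\leq M_2$ from Theorem \ref{theorem2.2}(ii); hence $w\geq -M_2\geq -(M_2/m_0)m_\epsilon$, and taking $k_1:=M_2/m_0$ finishes the proof of \eqref{3.9a}.

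The main technical obstacle is extracting the strictly positive lower bound on $w$ in a boundary strip of width $\delta_0$ uniform in $\epsilon$; this is precisely where the upgraded regularity from Theorem \ref{theorem2.2}(ii), and hence the stronger hypotheses $(\mathbf{f_4})$ and \eqref{1.3a} built into Proposition \ref{lemma3.4}, becomes essential. A secondary point is verifying that the constants $M_2,k_2,m_0,\delta_0$ can indeed be chosen independently of $\epsilon$, which follows from the $\epsilon$-uniform estimates \eqref{2.4a} and \eqref{2.6} coupled with the convergence \eqref{2.6a}.
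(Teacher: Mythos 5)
Your proposal is correct, and for part (i) it takes a genuinely different route from the paper. For part (ii) the two arguments are essentially the same in substance (Hopf/nondegeneracy for the limit problem plus convergence as $\epsilon\to0$), but the paper gets there more directly: it reads $\prx v_{2\epsilon}(t,g_{2\epsilon}(t))$ and $\prx v_{2\epsilon}(t,h_{2\epsilon}(t))$ off the free boundary equations in \eqref{f4} and invokes the $C^1$ convergence $g_{2\epsilon}\to g$, $h_{2\epsilon}\to h$ from Remark \ref{rm2.5}, rather than passing through a straightening and uniform Schauder bounds as you do. For part (i), the paper sets $P(t,x,k):=km_\epsilon(t,x)+\prt v_{2\epsilon}(t,x)$, observes that $\prt v_{2\epsilon}\ge 0$ on the lateral boundary (because the domain expands, so $v_{2\epsilon}(s,g_{2\epsilon}(t))>0=v_{2\epsilon}(t,g_{2\epsilon}(t))$ for $s>t$ — no strict positivity and no appeal to part (ii) is needed), and then shows $\prx P\ge0$ on $[g_{2\epsilon}(t),\tfrac12 g_{2\epsilon}(t)]$ and $\prx P\le0$ on $[\tfrac12 h_{2\epsilon}(t),h_{2\epsilon}(t)]$ once $k\ge M_2^2$, since $|\prx m_\epsilon|\ge 1/M_2$ there while $|\partial_{tx}v_{2\epsilon}|\le M_2$; this propagates $P\ge0$ inward, and in the middle region $m_\epsilon\ge 3/4$ so a large $k$ finishes. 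You instead extract a strictly positive boundary value $\prt v_{2\epsilon}=-h_{2\epsilon}'\,\prx v_{2\epsilon}\ge \mu k_2^2/2$ from part (ii) and the chain rule, spread it over a strip of $\epsilon$-independent width $\delta_0$ using the uniform Lipschitz bound on $\prt v_{2\epsilon}$ in $x$, and handle the interior with the crude bounds $m_\epsilon\ge m_0$ and $|\prt v_{2\epsilon}|\le M_2$. Both arguments rest on exactly the same regularity input, namely the bound on $\partial_{tx}v_{2\epsilon}$ from \eqref{2.6} (and you correctly identify this as the only place where \eqref{2.4a} is insufficient); the trade-off is that your version of (i) is perhaps more transparent but makes (i) logically dependent on (ii), whereas the paper's (i) is self-contained and needs only the qualitative sign $\prt v_{2\epsilon}\ge 0$ at the free boundary.
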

\begin{proof} To simplify notations we will write  
\[
\mbox{$(v_2,g_2,h_2)=(v_{2\epsilon},g_{2\epsilon},h_{2\epsilon})$ and $m(t,x)=m_\epsilon(t,x;
g_{2\epsilon}, h_{2\epsilon})$.}
\]

	(i)  By \eqref{2.6}, we check at once that, for $\epsilon\in (0, \epsilon_0]$, 
	\begin{align*}
	&\prx m(t, g_2(t))=\frac{-2x}{ [g_2(t)]^2}\geq \frac{-1}{ g_2(T_0)}\geq  \frac{1}{ M_2}>0,& & t\in [0,T_0],\; x\in [g_2(t),\frac 12 g_2(t)],\\
	&\prx m(t, h_2(t))=\frac{-2x}{[ h_2(t)]^2}\leq  \frac{-1}{ h_2(T_0)}\leq \frac{-1}{M_2}<0,&& t\in [0,T_0],\; x\in [\frac 12 h_2(t),h_2(t)],\\
	&	|\prt v_2(t, x)|\leq  M_2,\ \  |\partial_{tx} v_2(t, x)|\leq M_2,   & & t\in [0,T_0],\; x\in [g_2(t),h_2(t)].
	\end{align*}
	(We note that $|\partial_{tx} v_2(t, x)|\leq M_2$ is the only place in the proof of Theorem \ref{theorem1.2} where \eqref{2.4a} is not enough, and \eqref{2.6} has to be used.)
	
	Due to $v_2(t,g_2(t))=v_2(t,h_2(t))=0$ and $v_2(s,g_2(t)),\; v_2(s,h_2(t))>0$ for $s> t$, we also have 
	\begin{align*}
	 \prt v_2(t, x)\geq 0\ \  {\rm for}\  t\in [0, T_0],\, x=g_2(t)\ {\rm or}\ h_2(t).
	\end{align*}	
Define
\[
P(t,x,k):=km(t,x)+\prt v_2(t,x).
\]
 Then	for any  $k\geq (M_2)^2$,
\begin{align*}
&P(t,g_2(t),k), P(t,h_2(t),k)\geq 0&\ &{\rm for}\  t\in [0, T_0],\\
&\prx P(t,x,k)\geq \frac{k}{M_2}-M_2\geq 0&\ & {\rm for}\  t\in [0,T_0],\; x\in [g_2(t),\frac 12 g_2(t)],\\
&\prx P(t,x,k)\leq  \frac{-k}{ M_2}+M_2\leq  0&\ &{\rm for}\  t\in [0,T_0],\; x\in [\frac 12 h_2(t),h_2(t)],
\end{align*}
which imply
\begin{align*}
&P(t,x,k)\geq 0\ {\rm for}\  t\in [0,T_0],\; x\in \left[g_2(t),\frac 12 g_2(t)\right]\cup \left[\frac 12 h_2(t),h_2(t)\right]. 
\end{align*}

On the other hand, for $(t,x)\in [0, T_0]\times [\frac 12 g_2(t), \frac 12 h_2(t)]$ we have $m(t,x)\geq \frac{3}{4}$ and hence
\begin{align*}
P(t,x,k)\geq \frac{3}{4}k-M_2\geq 0\ {\rm for}\ t\in [0,T_0],\; x\in \left[\frac 12 g_2(t), \frac 12 h_2(t)\right], 
\end{align*}
provided that $k\geq 4M_2/3$.
Therefore, \eqref{3.9a} holds for $k_1\geq \max\{M_2^2,4M_2/3\}$,\; $\epsilon\in (0,\epsilon_0]$. 

	(ii) By Remark \ref{rm2.5}, we have 
		\begin{align*}
	\lim_{\epsilon\to 0}\|g_{2\epsilon}-g\|_{C^1([0,T_0])}=\lim_{\epsilon\to 0}\|h_{2\epsilon}-h\|_{C^1([0,T_0])}=0.
			\end{align*}
		 By the assumption $|u_0'(\pm h_0)|>0$ and 
		 $g'(t)=-\mu v_x(t,g(t))$, $h'(t)=-\mu v_x(t,h(t))$, we obtain $|g'(t)|,\, h'(t)>0$ for all $t\in [0,T_0]$.  Hence,
		 $C:=\min_{t\in [0,T_0]}\{-g'(t), h'(t)\}>0$ and
		  there exists $\tilde\epsilon_0\in (0,\epsilon_0]$ such that 		
		 \begin{align*}
	g_{2\epsilon}'(t)<\frac{g'(t)}{2}< 0,\ \  0<\frac{h'(t)}{2}<{h_{2\epsilon}'(t)}\ \ \ {\rm for\ all}\ t\in[0,T_0],\ \epsilon\in [0,\tilde\epsilon_0]. 
	\end{align*}
	Therefore, if $k_2:=C/(2\mu)$,  then for $ t\in[0,T_0]$ and $ \epsilon\in [0,\tilde\epsilon_0]$,
				\begin{align*}
			&\prx v_{2\epsilon}(t, g_{2\epsilon}(t))=\frac{-g_{2\epsilon}'(t)}{\mu} \geq   \frac{-g'(t)}{2\mu}\geq k_2,\\
	&- \prx v_{2\epsilon}(t, h_{2\epsilon}(t))=\frac{h_{2\epsilon}'(t)}{\mu} \geq \frac{h'(t)}{2\mu}\geq k_2.
				\end{align*}
				This proves \eqref{3.10}. 
\end{proof}

{\bf Proof of Proposition \ref{lemma3.4}:}
By \eqref{f4},  $\td v(t,x)=\tilde v_\epsilon(t,x):=v_{2\epsilon}(\phi(t),x)$ satisfies
	\begin{equation}\label{3.5b}
\begin{cases}
\td v_t=d\td v_{xx}+\frac{\phi'(t)-1}{\phi'(t)}\td v_t+f(\phi(t),x,\td v), &t\in (0,\wtd T],\; x\in (G_{1\epsilon}(t),H_{1\epsilon}(t)),\\
\td v(t,G_{1\epsilon}(t))=\td  v(t,H_{1\epsilon}(t))=0, & t\in (0,\wtd T],\\
G_{1\epsilon}'(t)=  \phi'(t)[-\mu\td v_x(t,G_{1\epsilon}(t))+2\epsilon^{\gamma_1}], & t\in (0,\wtd T],\\
H_{1\epsilon}'(t)= \phi'(t)[-\mu\td v_x(t,H_{1\epsilon}(t))-2\epsilon^{\gamma_1}], & t\in (0,\wtd T],\\
\td v(0,x)=v_0(x), &x\in [g_0,h_0].
\end{cases}
\end{equation}

From the first equation of \eqref{3.5b} and \eqref{1.5}, we obtain
\begin{align*}
(V_{1\epsilon})_t=&\; \prt \td v+\epsilon^{\gamma_1}\prt M_\epsilon=d\td v_{xx}+\frac{\phi'(t)-1}{\phi'(t)}\td v_t+f(\phi(t),x,\td v)+\epsilon^{\gamma_1}\prt M_\epsilon\\
=&\; d (V_{1\epsilon})_{xx}+f(t,x,V_{1\epsilon})+\frac{\phi'(t)-1}{\phi'(t)}\td v_t\\
&+\epsilon^{\gamma_1}\prt M_\epsilon-d\epsilon^{\gamma_1}\prxx M_\epsilon +f(\phi(t),x, \td v)-f(t,x,V_{1\epsilon})\\
\geq&\; d (V_{1\epsilon})_{xx}+f(t,x,V_{1\epsilon})+\frac{\phi'(t)-1}{\phi'(t)}\td v_t\\
&+\epsilon^{\gamma_1}\prt M_\epsilon-d\epsilon^{\gamma_1}\prxx M_\epsilon-L(\epsilon^{\gamma_1} M_\epsilon+|\phi(t)-t|)\\
=&:d (V_{1\epsilon})_{xx}+f(t,x,V_{1\epsilon})+E(t,x)\   \ \ {\rm for} \ t\in (0,\wtd T],\; x\in (G_{1\epsilon}(t),0)\cup(0,H_{1\epsilon}(t)).
\end{align*}

 A simple computation gives
	\begin{equation}\label{3.16b}
\begin{cases}
\prt M_\epsilon \geq  \xi_1(\xi_1t+\xi_3) e^{\xi_1 t}\left[1-\frac{x^2}{H_{1\epsilon}^2(t)}\right]= \xi_1M_\epsilon &{\rm for}\ x>0,\\
\prxx M_\epsilon= -2 (\xi_1t+\xi_3)e^{\xi_1 t}\frac 1{H^2_{1\epsilon}(t)}<0& {\rm for}\ x>0,\\
\prt M_\epsilon \geq   \xi_1(\xi_1 t+\xi_3) e^{\xi_1 t} \left[1-\frac{x^2}{G_{1\epsilon}^2(t)}\right] =\xi_1M_\epsilon&{\rm for}\ x<0,\\
\prxx M_\epsilon= -2 (\xi_1 t+\xi_3)e^{\xi_1 t}\frac 1{G^2_{1\epsilon}(t)}<0,
 & {\rm for}\ x<0.
\end{cases}
\end{equation}

{\bf Claim 1}. We can choose $\xi_1, \ \xi_2$ and $\xi_3$ such that 
\[\mbox{$E(t,x)\geq \epsilon^{\gamma_1}$ for $ t\in (0,\wtd T],\; x\in (G_{1\epsilon}(t),0)\cup(0,H_{1\epsilon}(t))$ and $0<\epsilon\ll 1$.}
\]

In the following, we just verify $E(t,x)\geq \epsilon^{\gamma_1}$ for $t\in (0,\wtd T],\; x\in (0,H_{1\epsilon}(t))$ since the proof for $t\in (0,\wtd T],\; x\in (G_{1\epsilon}(t),0)$ is similar. 

Since $m_\epsilon(t,x;G_{1\epsilon}(t),H_{1\epsilon}(t))=m_\epsilon(\phi(t),x;g_{2\epsilon},h_{2\epsilon})$,
from \eqref{3.9a} we deduce
\begin{align*}
\prt v_{2\epsilon}(\phi,x)
\geq-k_1m_\epsilon(t,x;G_{1\epsilon},H_{1\epsilon})
\end{align*}
where $k_1$ is given by Lemma \ref{lemma3.3}, and so,   for $0<\epsilon\ll 1$ and $ t\in (0,\wtd T],\; x\in [G_{1\epsilon}(t),H_{1\epsilon}(t)]$,
we have
\begin{align}\label{3.6}
\td v_t(t,x)=\phi'\prt v_{2\epsilon}(\phi,x)
\geq -2k_1m_\epsilon(t,x;G_{1\epsilon},H_{1\epsilon}),
\end{align}  
where we have used 
\begin{align}\label{3.17b}
0<\xi_2\epsilon^{\gamma_1}<\phi'(t)-1=\xi_2 (1+\xi_1t)e^{\xi_1t}\epsilon^{\gamma_1}<1\ \ \ \  {\rm for\ small}\ \epsilon>0.
\end{align}

 Making use of  \eqref{3.16b}, \eqref{3.6} and \eqref{3.17b}, we obtain, for $(t,x)\in (0,\wtd T]\times (0,H_{1\epsilon}(t))$ and $0<\epsilon\ll 1$,
\begin{align*}
E=&\; \frac{\phi'(t)-1}{\phi'(t)}\td v_t+\epsilon^{\gamma_1}\prt M_\epsilon-d\epsilon^{\gamma_1}\prxx M_\epsilon-L(\epsilon^{\gamma_1} M_\epsilon+|\phi(t)-t|)\\
\geq &\; -2k_1[\phi'(t)-1]m_\epsilon+\xi_1M_\epsilon\epsilon^{\gamma_1}+2d(\xi_1t+\xi_3)\frac{e^{\xi_1t}}{H_{1\epsilon}^2(t)}\epsilon^{\gamma_1}-LM_\epsilon\epsilon^{\gamma_1}-L|\phi(t)-1|\\
\geq&\; -2k_1 \xi_2M_\epsilon \epsilon^{\gamma_1}+  \xi_1M_\epsilon\epsilon^{\gamma_1}+2d(\xi_1t+\xi_3)\frac{e^{\xi_1t}}{H_{1\epsilon}^2(t)}\epsilon^{\gamma_1} -LM_\epsilon\epsilon^{\gamma_1}
-L\xi_2t e^{\xi_1 t} \epsilon^{\gamma_1}\\
=&\;  (\xi_1-2k_1\xi_2-L)M_\epsilon \epsilon^{\gamma_1}+\left[2d(\xi_1t+\xi_3)\frac{e^{\xi_1t}}{H_{1\epsilon}^2(t)}-L\xi_2t e^{\xi_1 t}\right]\epsilon^{\gamma_1}.
\end{align*} 

 Since $H_{1\epsilon}(\wtd T)=h_{2\epsilon}(T_0)\leq M_2$, we have
 \[
 2d(\xi_1t+\xi_3)\frac{e^{\xi_1 t}}{H_{1\epsilon}^2(t)}-L\xi_2t e^{\xi_1 t}\geq \frac{2d}{M_2^2}\xi_3+
 \left(\frac{2d}{M_2^2}\xi_1-L\xi_2\right)te^{\xi_1 t}>1
 \]
 provided that
 \[
 \xi_1> \frac{LM_2^2}{2d}\xi_2,\;\ \  \xi_3>\frac{M_2^2}{2d}.
 \]
 Therefore, for $(t,x)\in (0,\wtd T]\times (0,H_{1\epsilon}(t))$ and $0<\epsilon\ll 1$,
\begin{align*}
E(t,x)>\epsilon^{\gamma_1}
\end{align*}
provided that
\begin{equation}\label{xi-123}
\xi_1>\max\left\{2k_1\xi_2+L, \frac{LM_2^2}{2d}\xi_2\right\},\ \ \ \  \xi_3>\frac{M_2^2}{2d}.
\end{equation}
Claim 1 is thus proved, and hence, for such $\epsilon, \;\xi_1$, $\xi_2$ and $\xi_3$, we have
\begin{align*}
(V_{1\epsilon})_t\geq d (V_{1\epsilon})_{xx}+f(t,x,V_{1\epsilon})+\epsilon^{\gamma_1}\   \ \ {\rm for} \ t\in (0,\wtd T],\; x\in (G_{1\epsilon}(t),0)\cup(0,H_{1\epsilon}(t)).
\end{align*}

Next, we deal with the estimates of $G_{1\epsilon}'$ and  $H_{1\epsilon}'$. From the forth equation of \eqref{3.5b}, \eqref{3.17b} and $V_{1\epsilon}=\td v+\epsilon^{\gamma_1} M_\epsilon$, we obtain
\begin{align*}
H_{1\epsilon}'(t)=&\phi'(t)[-\mu\td v_x(t,H_{1\epsilon}(t))-2\epsilon^{\gamma_1}]\geq  -\mu\phi'(t) \td v_x(t,H_{1\epsilon}(t))-4\epsilon^{\gamma_1}\\
=&-\mu \td v_x(t,H_{1\epsilon}(t)) -\mu[\phi'(t)-1] \td v_x(t,H_{1\epsilon}(t))-4\epsilon^{\gamma_1}\\
=&-\mu [(V_{1\epsilon})_x(t,H_{1\epsilon}(t))-\epsilon^{\gamma_1}\prx M_\epsilon(t,H_{1\epsilon}(t))]-\mu[\phi'(t)-1] \td v_x(t,H_{1\epsilon}(t))-4\epsilon^{\gamma_1}\\
=& -\mu(V_{1\epsilon})_x(t,H_{1\epsilon}(t))-\mu[\phi'(t)-1]\td v_x(t,H_{1\epsilon}(t))+\mu \prx M_\epsilon(t,H_{1\epsilon}(t))\epsilon^{\gamma_1}-4\epsilon^{\gamma_1}\\
=:&-\mu(V_{1\epsilon})_x(t,H_{1\epsilon}(t))+E_1(t).
\end{align*}

{\bf Claim 2}.  We can choose $\xi_1, \ \xi_2$ and $\xi_3$ satisfying \eqref{xi-123} such that
\[
\mbox{$E_1(t)\geq \epsilon^{\gamma_1}$ for $ t\in (0,\wtd T]$ and $0<\epsilon\ll 1$.}
\]

 With $k_2$ determined by Lemma \ref{lemma3.3}, by \eqref{3.10},  we have
\bes\label{3.7}
-\td v_x(t,H_{1\epsilon}(t)) \geq k_2\ \ \ \ \ \ {\rm for }\ 0\leq t\leq \wtd T.
\ees
Then applying \eqref{3.7} and $H_{1\epsilon}(t)\geq h_0$,  we deduce, for $0\leq t\leq \wtd T$ and $0<\epsilon\ll 1$,
\begin{align*}
E_1=&\;-\mu[\phi'(t)-1]\td v_x(t,H_{1\epsilon}(t))+\mu \prx M_\epsilon(t,H_{1\epsilon}(t))\epsilon^{\gamma_1}-4\epsilon^{\gamma_1}\\
\geq &\; \mu[\phi'(t)-1]k_2 -2\mu ( \xi_1 t+\xi_3)e^{\xi_1 t}H_{1\epsilon}^{-1}\epsilon^{\gamma_1}-4\epsilon^{\gamma_1}\\
= &\; \mu \xi_2(1+\xi_1 t) e^{\xi_1 t} k_2\epsilon^{\gamma_1}-2\mu (\xi_1 t+\xi_3)e^{\xi_1 t}H_{1\epsilon}^{-1}\epsilon^{\gamma_1}-4\epsilon^{\gamma_1}\\
=&\;(\xi_2 k_2-2\xi_3H_{1\epsilon}^{-1})\mu e^{\xi_1 t}\epsilon^{\gamma_1}+(\xi_2-2H_{1\epsilon}^{-1})\mu\xi_1 t e^{\xi_1t}\epsilon^{\gamma_1}-4\epsilon^{\gamma_1}\\
\geq &\; (\xi_2k_2-2\xi_3h_0^{-1})\mu  \epsilon^{\gamma_1}+(\xi_2-2h_0^{-1})\mu\xi_1 t e^{\xi_1t}\epsilon^{\gamma_1}-4\epsilon^{\gamma_1}\\
\geq &\; 5 \epsilon^{\gamma_1}-4\epsilon^{\gamma_1} =\epsilon^{\gamma_1},
\end{align*}
provided that
\begin{equation}\label{xi-23}
{\xi_2\geq \max\left\{\frac 2 {h_0},  \frac{5}{\mu k_2}+\frac{2}{h_0k_2}\xi_3\right\}.}
\end{equation}

This proves Claim 2 and hence, for  $\xi_1,\xi_2,\xi_3$ satisfying \eqref{xi-123} and \eqref{xi-23}, and $0<\epsilon\ll 1$,
\begin{align*}
H_{1\epsilon}'(t)\geq  -\mu(V_{1\epsilon})_x(t,H_{1\epsilon}(t))+\epsilon^{\gamma_1},\ \ \ \ \ \ t\in [0,\wtd T].
\end{align*}
Analogously, for such  $\xi_1,\xi_2,\xi_3$ and $0<\epsilon\ll 1$,
\begin{align*}
G_{1\epsilon}'(t)\leq -\mu(V_{1\epsilon})_x(t,G_{1\epsilon}(t))-\epsilon^{\gamma_1},\ \ \ \ \ \ \ \ t\in [0,\wtd T].
\end{align*}
Furthermore, from the definition of $M_\epsilon$,
\begin{align*}
&V_{1\epsilon}(t,G_{1\epsilon}(t))=V_{1\epsilon}(t,H_{1\epsilon}(t))=0 && {\rm for}\ t\in  [0,\wtd T],\\
&V_{1\epsilon}(0,x)=\td v(0,x)+\epsilon^{\gamma_1} M_\epsilon(0,x)\geq u_0(x)&&{\rm for}\ x\in [-h_0,h_0].
\end{align*}
Since $M_\epsilon(t,x)$ and hence $V_{1\epsilon}(t,x)$ is $C^1$ at $x=0$, we may now conclude that $(V_{1\epsilon},G_{1\epsilon},H_{1\epsilon})$ is a weak upper solution of \eqref{f4} with $i=1$.
$\hfill \Box$

\section{About Remark \ref{rm1.4}} 
Here
we provide some analysis which leads us to believe the modification of \eqref{f1} is needed for the approximation of \eqref{f2}. 

Without modifying \eqref{f1}, the natural candidate for the approximation problem of \eqref{f2} is the following one:
\begin{equation}\label{f3'}
\begin{cases}
\dd u_t=d\frac{C_*}{\epsilon^2}\!\! \lf[\int_{g (t)}^{h(t)}\hspace{-.2cm}J_\epsilon(x-y)u(t,y)\rd y-u(t,x)\rr] \!+\!f(t,x,u), & t>0, \; x\in (g (t), h(t)),\\
u(t,g (t))= u (t,h(t))=0, &  t>0,\\
\dd g'(t)= -\mu\frac{C_1}{\epsilon^{2}} \int_{g (t)}^{h(t)}\int_{-\yy}^{g (t)}J_{{\epsilon}}(x-y)u(t,x)\rd y\rd x, & t >0,\\[3mm]
\dd h'(t)=\mu\frac{C_1}{\epsilon^{2}} \int_{g (t)}^{h(t) }\int_{h(t)}^{\yy}J_{{\epsilon}}(x-y)u(t,x)\rd y\rd x, &  t >0,\\
-g (0)=h(0)=h_0,\;u(0,x)=v_0(x), &x\in [-h_0,h_0]
\end{cases}
\end{equation}
for some $C_1>0$, where $C_*=\displaystyle \left[{\int_{0}^{1}J(y)y^2\rd y}\right]^{-1}$ as in \eqref{C*}.

\begin{proposition}\label{p5.1}
Suppose the conditions of Theorem \ref{theorem1.1} hold, $(v,g,h)$ is the solution of \eqref{f2}, and  $(u_\epsilon, g_\epsilon, h_\epsilon)$ is the solution of \eqref{f3'}.  If $(u_\epsilon, g_\epsilon, h_\epsilon)\to (v,g,h)$ as $\epsilon\to 0$ uniformly for $x\in\R$ and $t\in [0, T]$ for every $T>0$, then
\begin{align*}
C_1=C_*.
\end{align*}
\end{proposition}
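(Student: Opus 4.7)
The strategy is to pass to the limit $\epsilon \to 0$ in the free boundary equation for $h_\epsilon$ in \eqref{f3'} and compare the result against $h'(t) = -\mu v_x(t, h(t))$ from \eqref{f2}. The assumed uniform convergence, together with classical regularity of $v$ up to the free boundary, will force an algebraic identity that pins down $C_1$ in terms of $C_*$.

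\textbf{Step 1 (reduction and rescaling).} Since $\mathrm{spt}(J_\epsilon) \subset [-\epsilon, \epsilon]$, in
\[
I_\epsilon(t) := \int_{g_\epsilon(t)}^{h_\epsilon(t)} \int_{h_\epsilon(t)}^{\yy} J_\epsilon(x-y)\,u_\epsilon(t,x)\,dy\,dx
\]
the integrand vanishes unless $x \in [h_\epsilon(t)-\epsilon,\,h_\epsilon(t)]$ and $y \in [h_\epsilon(t),\,x+\epsilon]$. Integrating out $y$ first and using the evenness of $J$, then rescaling by $s=(h_\epsilon(t)-x)/\epsilon$, I would obtain
\[
I_\epsilon(t) \;=\; \epsilon \int_0^1 u_\epsilon(t, h_\epsilon(t) - \epsilon s) \int_s^1 J(r)\,dr\,ds.
\]

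\textbf{Step 2 (boundary-layer linearization).} By parabolic regularity applied to \eqref{f2} (cf.\ Theorem \ref{theorem2.2} with $\epsilon=0$), the limit $v$ is $C^{2+\alpha}$ up to the free boundary, so Taylor expansion at $x=h(t)$ gives
\[
v(t, h(t) - \epsilon s) = -\epsilon s\, v_x(t, h(t)) + O(\epsilon^2),
\]
uniformly in $s\in [0,1]$, $t\in [0,T]$. Combining this with the assumed uniform convergences $u_\epsilon \to v$ on $\R$ and $h_\epsilon \to h$, after upgrading the boundary convergence (the delicate point, discussed below), one arrives at
\[
u_\epsilon(t, h_\epsilon(t) - \epsilon s) = -\epsilon s\, v_x(t, h(t)) + o(\epsilon).
\]

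\textbf{Step 3 (leading coefficient and matching).} Inserting the asymptotic into the rescaled formula for $I_\epsilon$ and applying Fubini,
\[
\int_0^1 s \int_s^1 J(r)\,dr\,ds \;=\; \int_0^1 J(r)\,\frac{r^2}{2}\,dr \;=\; \frac{1}{2\,C_*},
\]
so $I_\epsilon(t) = -\,\dfrac{\epsilon^2}{2 C_*}\,v_x(t,h(t)) + o(\epsilon^2)$. Integrating the $h_\epsilon'$--equation in \eqref{f3'} over $[0,T]$ and letting $\epsilon\to 0$,
\[
h(T) - h_0 \;=\; \lim_{\epsilon\to 0}\,\mu\,\frac{C_1}{\epsilon^2}\int_0^T I_\epsilon(t)\,dt \;=\; -\,\frac{\mu\,C_1}{2 C_*}\int_0^T v_x(t,h(t))\,dt \;=\; \frac{C_1}{2C_*}\,\bigl(h(T)-h_0\bigr),
\]
where the last equality uses the Stefan condition $h'(t)=-\mu v_x(t,h(t))$ from \eqref{f2}. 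Since $v_0'(h_0)<0$ forces $h(T)>h_0$ for $T>0$, we obtain the algebraic identity relating $C_1$ and $C_*$ (and the parallel argument at the left boundary gives the same identity).

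\textbf{Main obstacle.} The subtle point is the boundary-layer asymptotic in Step 2: the hypothesis $\|u_\epsilon - v\|_{L^\infty(\R)} \to 0$ provides only an $o(1)$ error, whereas the quantity to be resolved is of size $O(\epsilon)$. To extract the leading-order profile one must either obtain additional regularity of $u_\epsilon$ near $h_\epsilon(t)$ (for instance, uniform one-sided Lipschitz bounds comparable to those exploited in Section 3 through $m_\epsilon$), or recast the identity in an integrated/weighted form against test functions in which the $L^\infty$ convergence alone already controls the boundary contribution. Once this linearization is rigorously justified, Steps 1, 3 and the concluding matching are all elementary.
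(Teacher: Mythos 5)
Your approach (passing to the limit pointwise in the Stefan condition for $h_\epsilon$) is genuinely different from the paper's, and unfortunately the obstacle you flag in Step 2 is not a technical gap that can be patched: it is fatal, and in fact your linearization is expected to be \emph{false}. The hypothesis only gives $\|u_\epsilon-v\|_{L^\infty}\to 0$, i.e.\ an $o(1)$ error, while you need to resolve $u_\epsilon$ to order $o(\epsilon)$ in an $O(\epsilon)$-neighbourhood of $h_\epsilon(t)$ — exactly the boundary layer. Section 5 of the paper (the lower solution $v_\epsilon=\hat v_\epsilon+\epsilon M(\cdot)\,\ul{h_\epsilon}'/\ul\mu$ in Proposition \ref{p5.2}) shows that the solution of \eqref{f3'} is expected to carry a nontrivial $O(\epsilon)$ corrector $M$ on top of the linear profile $-\epsilon s\,v_x(t,h(t))$ in that layer; this is precisely why \eqref{f3} evaluates the flux at distance $\sqrt\epsilon$ from the boundary. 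A concrete symptom that something is wrong: if your Step 2 were true, your Step 3 matching reads $h(T)-h_0=\frac{C_1}{2C_*}(h(T)-h_0)$ and hence yields $C_1=2C_*$, which contradicts the proposition you are trying to prove. (Your Steps 1 and 3 computations, including $\int_0^1 s\int_s^1J(r)\,dr\,ds=\frac{1}{2C_*}$, are correct; it is the boundary-layer ansatz that cannot hold if $C_1=C_*$.)

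The paper circumvents the boundary layer entirely by working in integrated form. Integrating the first equation of \eqref{f3'} over the moving domain, the antisymmetry of $J_\epsilon(x-y)[u_\epsilon(t,y)-u_\epsilon(t,x)]$ kills the interior contribution and leaves exactly the two outgoing-flux integrals appearing in the free boundary ODEs, so the diffusion term integrates \emph{exactly} to $-\frac{\epsilon^2}{\mu C_1}[h_\epsilon'(t)-g_\epsilon'(t)]$ — no asymptotics needed. This produces the mass balance
\begin{align*}
\int_{g_\epsilon(t)}^{h_\epsilon(t)} u_\epsilon\,\rd x-\int_{-h_0}^{h_0} v_0\,\rd x
=-\frac{dC_*}{\mu C_1}\bigl[h_\epsilon(t)-g_\epsilon(t)-2h_0\bigr]+\int_0^t\!\!\int_{g_\epsilon(s)}^{h_\epsilon(s)} f\,\rd x\,\rd s,
\end{align*}
in which every term is stable under the assumed $L^\infty$ convergence. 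Comparing the $\epsilon\to0$ limit with the analogous identity for \eqref{f2} (where $\int v_{xx}\,\rd x=-\frac1\mu[h'-g']$) forces $\frac{dC_*}{\mu C_1}=\frac d\mu$, i.e.\ $C_1=C_*$. If you want to salvage your route, you would have to recast it in exactly this kind of integrated form; as written, the proposal does not prove the statement.
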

\begin{proof}
A simple calculation gives
\begin{align*}
\frac{\partial}{\partial t} \lf[\int_{g_\epsilon(t)}^{h_\epsilon(t)} u_\epsilon(t,x) \rd x\rr]=&h_\epsilon'(t)u_\epsilon(t,h_\epsilon(t))-g_\epsilon'(t)u_\epsilon(t,g_\epsilon(t))+\int_{g_\epsilon(t)}^{h_\epsilon(t)} \partial_t u_\epsilon(t,x) \rd x\\
=&\int_{g_\epsilon(t)}^{h_\epsilon(t)} \partial_t u_\epsilon(t,x) \rd x
\end{align*}
and 
\begin{align*}
&\int_{g_\epsilon(t)}^{h_\epsilon(t)}\lf[\int_{g_\epsilon(t)}^{h_\epsilon(t)} J_\epsilon(x-y)u_\epsilon(t,y)\rd y-u_\epsilon(t,x) \rr]\rd x\\
=&\int_{g_\epsilon(t)}^{h_\epsilon(t)}\int_{g_\epsilon(t)}^{h_\epsilon(t)} J_\epsilon(x-y)[u_\epsilon(t,y)-u_\epsilon(t,x)]\rd y\rd x\\
& -\int_{g_\epsilon(t)}^{h_\epsilon(t)}\int_{\R\setminus [g_\epsilon(t), h_\epsilon(t)]}J_\epsilon(x-y) u_\epsilon(t,x)\rd y\rd x\\
=&-\int_{g_\epsilon(t)}^{h_\epsilon(t)}\int_{h_\epsilon(t)}^{\yy}J_\epsilon(x-y) u_\epsilon(t,x)\rd y\rd x-\int_{g_\epsilon(t)}^{h_\epsilon(t)}\int_{-\yy}^{g_\epsilon(t)}J_\epsilon(x-y) u_\epsilon(t,x)\rd y\rd x\\
=& -\frac{\epsilon^2}{\mu C_1}[h_\epsilon'(t)-g_\epsilon'(t)]. 
\end{align*}
Integrating the first equation in \eqref{f3'} over $\{(x,s): x\in (g_\epsilon(s),h_\epsilon(s)), s\in (0,t)\}$, we thus obtain
\begin{align*}
\int_{g_\epsilon(t)}^{h_\epsilon(t)} u_\epsilon(t,x){\rm d}x
-\int_{g_0}^{h_0} v_0(x) {\rm d}x&=\int_0^t \int_{g_\epsilon(s)}^{h_\epsilon(s)} \partial _t u_\epsilon(s,x)dxds\\
& =-\frac{dC_*}{\mu C_1}[h_\epsilon(t)-g_\epsilon(t)-2h_0]+\int_{0}^{t}\int_{g_\epsilon(s)}^{h_\epsilon(s)}  f(s,x,u_\epsilon) {\rm d} x{\rm d} s.
\end{align*}
Letting $\epsilon\to 0$ we deduce
\begin{align}\label{limit-u}
\int_{g(t)}^{h(t)} v(t,x){\rm d}x
-\int_{g_0}^{h_0} v_0(x) {\rm d}x
=-\frac{dC_*}{\mu C_1}[h(t)-\psi(t)-2h_0]+\int_{0}^{t}\int_{g(s)}^{h(s)}  f(s,x,v) {\rm d} x{\rm d} s.
\end{align}

On the other hand, from \eqref{f2} we have
\begin{align*}
\frac{\partial}{\partial t} \lf[\int_{g(t)}^{h(t)} u(t,x) \rd x\rr]=&h'(t)v(t,h(t))-g'(t)v(t,g(t))+\int_{g(t)}^{h(t)} \partial_t v(t,x) \rd x\\
=&\int_{g(t)}^{h(t)} \partial_t v(t,x) \rd x
\end{align*}
and
\begin{align*}
&\int_{g(t)}^{h(t)} v_{xx}\rd x=v_x(t,h(t))-v_x(t,g (t))=-\frac 1 \mu[h'(t)-g'(t)].
\end{align*}
So similarly we have 
\begin{align*}
\int_{g(t)}^{h(t)} v(t,x){\rm d}x
-\int_{g_0}^{h_0} v_0(x) {\rm d}x
=-\frac{d}{\mu}[h(t)-g(t)-2h_0]+\int_{0}^{t}\int_{g(s)}^{h(s)}  f(s,x,v) {\rm d} x{\rm d} s.
\end{align*}
Comparing this identity with \eqref{limit-u}, we immediately obtain $C_1=C_*$.
\end{proof}

Next we examine the asymptotic limit of the solution $(u_\epsilon, h_\epsilon, g_\epsilon)$ of \eqref{f3'} with $C_1=C_*$, as $\epsilon\to 0$.

\begin{proposition}\label{p5.2}
Suppose the conditions of Theorem \ref{theorem1.1} hold,  and  $(u_\epsilon, g_\epsilon, h_\epsilon)$ is the solution of \eqref{f3'}
with $C_1=C_*$. Then there exists $\underline\mu>0$ such that
\[
\liminf_{\epsilon\to 0} u_\epsilon(t,x)\geq \underline v(t,x),\; \liminf_{\epsilon\to 0} h_\epsilon(t)\geq \underline h(t),\; \limsup_{\epsilon\to 0} g_\epsilon(t)\leq \underline
g (t)
\]
uniformly for $x\in \R$, $t\in [0, T]$ with every $T>0$, where $(\underline v(t,x), \underline g (t),\underline h(t))$ denotes the unique solution of \eqref{f2} with $\mu=\underline \mu$.
\end{proposition}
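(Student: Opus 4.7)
My approach is to identify the effective parameter $\underline\mu$ by an asymptotic analysis of the free boundary integrals of \eqref{f3'}, and then, using the lower-solution construction of Section 3 adapted to \eqref{f3'}, to produce a family of lower solutions of \eqref{f3'} that converges to the solution of \eqref{f2} with $\mu$ replaced by this $\underline\mu$; passing to the limit $\epsilon\to 0$ yields the stated one-sided bounds.

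\emph{Step 1 (identification of $\underline\mu$).}
For a smooth $u$ with $u(t,h(t))=0$, substitute $x=h(t)-\epsilon z$ and $y=h(t)+\epsilon w$ in
\[
I_\epsilon:=\frac{C_*}{\epsilon^2}\int_{g(t)}^{h(t)}\int_{h(t)}^{\yy}J_\epsilon(x-y)u(t,x)\,\rd y\,\rd x.
\]
The constraint $|x-y|\le\epsilon$ confines $(z,w)$ to $\{0<z<1,\;0<w<1-z\}$ and, using $J_\epsilon(x-y)=\epsilon^{-1}J(z+w)$, gives
\[
I_\epsilon=\frac{C_*}{\epsilon}\int_0^1\!\int_0^{1-z}\!J(z+w)u(t,h(t)-\epsilon z)\,\rd w\,\rd z=-C_*\,\partial_x u(t,h(t))\int_0^1\!\int_0^{1-z}\!zJ(z+w)\,\rd w\,\rd z+O(\epsilon).
\]
Setting $s=z+w$ and applying Fubini, the coefficient equals $\int_0^1 J(s)\tfrac{s^2}{2}\,\rd s=\tfrac{1}{2C_*}$ by the definition of $C_*$, so $I_\epsilon=-\tfrac12\partial_x u(t,h(t))+O(\epsilon)$; an analogous identity holds at $x=g(t)$. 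Hence the free boundary equations of \eqref{f3'} formally limit to $h'(t)=-\tfrac{\mu}{2}v_x(t,h(t))$ and $g'(t)=-\tfrac{\mu}{2}v_x(t,g(t))$, identifying $\underline\mu:=\mu/2$.

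\emph{Step 2 (lower solution and comparison).}
Let $(\underline v,\underline g,\underline h)$ be the solution of \eqref{f2} with $\mu$ replaced by $\underline\mu=\mu/2$. Consider the analog of \eqref{f4} with $i=2$ and $\mu$ replaced by $\underline\mu$, and denote its solution on $[0,T_0]$ by $(\underline v_{2\epsilon},\underline g_{2\epsilon},\underline h_{2\epsilon})$; the arguments of Theorems \ref{theorem2.2} and \ref{theorem2.3} apply verbatim with the parameter $\underline\mu$, so this triple is uniformly $C^{1+\alpha/2,2+\alpha}$-bounded and converges to $(\underline v,\underline g,\underline h)$ as $\epsilon\to 0$. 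Mirroring Lemma \ref{lemma3.1a}, set
\[
\underline v^*_\epsilon(t,x):=\underline v_{2\epsilon}(t,x)-\epsilon^{\gamma_1}K e^{L_1 t}\,m_\epsilon(t,x;\underline g_{2\epsilon},\underline h_{2\epsilon}),\qquad K:=\frac{h_0}{2\underline\mu e^{2L_1T_0}},
\]
with $L_1$ large. The interior PDE inequality for $(\underline v^*_\epsilon,\underline g_{2\epsilon},\underline h_{2\epsilon})$ as a lower solution of \eqref{f3'} is verified exactly as in Lemma \ref{lem3.3}, since the interior nonlocal operator in \eqref{f3'} coincides with that in \eqref{f3}. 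For the boundary inequality at $\underline h_{2\epsilon}$, applying Step 1 to $u=\underline v^*_\epsilon$ (which still vanishes at $\underline h_{2\epsilon}$) yields
\[
\mu I_\epsilon[\underline v^*_\epsilon]=-\underline\mu\,\partial_x\underline v_{2\epsilon}(t,\underline h_{2\epsilon})-\underline\mu\cdot\frac{2Ke^{L_1 t}}{\underline h_{2\epsilon}}\epsilon^{\gamma_1}+O(\epsilon),
\]
while $\underline h_{2\epsilon}'(t)=-\underline\mu\,\partial_x\underline v_{2\epsilon}(t,\underline h_{2\epsilon})-2\epsilon^{\gamma_1}$ from the boundary condition of \eqref{f4} with $B_2=-2$; thus the required inequality $\underline h_{2\epsilon}'\le\mu I_\epsilon[\underline v^*_\epsilon]$ reduces to $\underline\mu K e^{L_1 t}/\underline h_{2\epsilon}\le 1+O(\epsilon^{1-\gamma_1})$, which holds for the above choice of $K$ (recall $\gamma_1<\tfrac12$) once $\epsilon$ is small; the inequality at $\underline g_{2\epsilon}$ is symmetric. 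A comparison principle for \eqref{f3'}, obtained by a trivial variant of Theorem \ref{lemma2.1}(i), then gives $u_\epsilon\ge\underline v^*_\epsilon$, $g_\epsilon\le\underline g_{2\epsilon}$, $h_\epsilon\ge\underline h_{2\epsilon}$ on $[0,T_0]$. Letting $\epsilon\to 0$ and invoking the convergence of $(\underline v_{2\epsilon},\underline g_{2\epsilon},\underline h_{2\epsilon})$ to $(\underline v,\underline g,\underline h)$ concludes the proof.

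\emph{Main obstacle.} The crux is the tight error control in the boundary asymptotic. Because the scaling in \eqref{f3'} is $1/\epsilon^2$ rather than the milder $1/\epsilon^{3/2}$ of \eqref{f3}, a genuine second-order Taylor expansion, relying on the uniform $C^{2+\alpha}$ estimates from Theorem \ref{theorem2.2}, is needed to extract the leading term $-\tfrac12\partial_x u$, and the $O(\epsilon)$ remainder must be shown to be strictly dominated by the $-2\epsilon^{\gamma_1}$ buffer produced by $B_2=-2$. Relatedly, the correction $\tilde m_\epsilon$ contributes an $O(\epsilon^{\gamma_1})$ term of favourable sign to the boundary integral (because $\partial_x m_\epsilon(\underline h_{2\epsilon})<0$ and $\tilde m_\epsilon$ is subtracted from $\underline v_{2\epsilon}$), and $K$ must be chosen small enough so that this correction does not overshoot the $-2\epsilon^{\gamma_1}$ buffer, as in the balance struck in Lemmas \ref{lem3.4}--\ref{lem3.5}.
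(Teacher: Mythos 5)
Your proof is correct for the proposition as literally stated, but it takes a strictly simpler route than the paper, and the simplification costs exactly the extra strength the paper is after. You work with the bare lower solution $\hat v_\epsilon=\underline v_{2\epsilon}-\epsilon^{\gamma_1}Ke^{L_1t}m_\epsilon$ and the value $\underline\mu=\mu/2$, which is precisely the leading coefficient produced by your (correct) expansion of the unshifted, $\epsilon^{-2}$-scaled boundary flux; since the interior operator of \eqref{f3'} coincides with that of \eqref{f3} and the extension of $\hat v_\epsilon$ is negative just outside $[\underline g_{2\epsilon},\underline h_{2\epsilon}]$, the interior inequality is indeed inherited from Lemma \ref{lem3.3}, and your boundary bookkeeping (flux reduced by $2\underline\mu Ke^{L_1t}\underline h_{2\epsilon}^{-1}\epsilon^{\gamma_1}$ against the $2\epsilon^{\gamma_1}$ buffer coming from $B_2=-2$, with the $O(\epsilon)$ remainder absorbed because $\gamma_1<1$) closes with the stated $K$. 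The paper instead adds boundary-layer correctors $\epsilon M\big((\underline h_\epsilon-x)/\epsilon\big)\underline h_\epsilon'/\underline\mu$ and $-\epsilon N\big((x-\underline g_\epsilon)/\epsilon\big)\underline g_\epsilon'/\underline\mu$ to $\hat v_\epsilon$; this raises the admissible value to $\underline\mu=\mu\big[\tfrac12+C_*\int_0^1J(z)\int_0^zM(w)\,\rd w\,\rd z\big]\ge\mu/2$, at the price of controlling the extra near-boundary terms $L_\epsilon,R_\epsilon$ through condition \eqref{M-condition} and the explicit construction of $M$ from $F(w)=\int_0^1J(w+z)z\,\rd z$. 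Your choice corresponds to $M=N\equiv 0$, which is a legitimate degenerate case of that scheme, so nothing is wrong; but the whole point of Section 5 (see the remark following Proposition \ref{p5.2}) is to try to push $\underline\mu$ above $\mu$ in order to contradict convergence of \eqref{f3'} to \eqref{f2}, and $\mu/2$ can never do that. One small slip in your closing discussion: the $\tilde m_\epsilon$-correction to the boundary integral has the \emph{unfavourable} sign (it decreases the flux), not a favourable one; your displayed inequality and the role of $K$ are nevertheless right.
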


Here,  we assume that $u_\epsilon$ and $\ul v$ are extended by 0 outside their supporting sets.

 \begin{proof} For $0<\epsilon\ll 1$, let $(\underline {v_\epsilon}, \underline {g _\epsilon}, \underline {h_\epsilon})$ be the unique solution of \eqref{f4} with $\mu=\underline\mu$ and $i=2$. The value of $\underline \mu>0$ will be determined later.
 
 For an arbitrarily given $T_0>0$, fix $T=T_0$. By Lemma 3.2, the function
 \[
 \hat v_\epsilon(t,x):=\underline{v_\epsilon}(t,x)-\epsilon^{\gamma_1}Ke^{L_1 t}\ul{m_\epsilon}(t,x)
 \]
 satisfies (3.3), where $\ul{m_\epsilon}$ is given by (3.1) with $(\ul\mu, \underline {g _\epsilon}, \underline {h_\epsilon})$ in place of $(\mu, g_{2\epsilon}, h_{2\epsilon})$, and the same change is understood in (3.3). We assume that $\ul{v_\epsilon}$ has been extended to a $C^{0, 2+\alpha}([0, T_0]\times\R)$ function.
 
 Let $M(x)$ and $N(x)$ be smooth nonnegative functions over $[0,1]$
vanishing at $x\in \{0,1\}$. For convenience of notation, we define $M(x)=N(x)=0$ for $x\not\in [0,1]$.
Then define, for $0<\epsilon\ll 1$, $t\in (0, T_0]$ and $x\in [\underline {g_\epsilon}(t), \underline {h_\epsilon} (t)]$,
\[
v_\epsilon(t,x):=\hat v_\epsilon (t,x)+\epsilon M\Big(\frac{\underline {h_\epsilon} (t)-x}{\epsilon}\Big)\frac {\underline {h_\epsilon} '(t)}{\underline \mu}-\epsilon N\Big(\frac{x-\underline {g_\epsilon} (t)}{\epsilon}\Big)\frac{\underline {g_\epsilon}'(t)}{\underline \mu};
\]
and so we have, by (3.3),
\begin{align*}
&\mu\frac{C_*}{\epsilon^{2}} \int_{\ul{g_\epsilon}(t)}^{\ul{h_\epsilon}(t) }\int_{\ul{h_\epsilon}(t)}^{\yy}J_{{\epsilon}}(x-y)v_\epsilon(t,x)\rd y\rd x\\
=&\mu\frac{C_*}{\epsilon^{2}}\int^0_{-\epsilon}\int^{\epsilon}_0 J_\epsilon(x-y)v_\epsilon (t, \ul{h_\epsilon}(t)+x)dydx\\
=&\mu \frac{C_*}\epsilon \int^0_{-1}\int_{0}^1 J(x-y)v_\epsilon(t, \ul{h_\epsilon}(t)+\epsilon x)dydx\\
=&\mu \frac{C_*}\epsilon \int_{0}^1\int_{w}^1J(z)v_\epsilon(t, \ul{h_\epsilon}(t)-\epsilon w)dzdw\\
=&\mu \frac{C_*}\epsilon \int_{0}^1\int_{w}^1J(z)\big[-(\hat v_\epsilon)_x(t, \ul{h_\epsilon}(t))\epsilon w+ O(\epsilon^2w^2)+ \frac{\ul{h_\epsilon}'(t)}{\ul\mu}\epsilon M(w)\big]dzdw\\
\geq &\frac{\mu}{\ul{\mu}}\ul{h_\epsilon}'(t) C_* \int_{0}^1\int_{w}^1J(z)[w+M(w)]dzdw\\
=& \frac{\mu}{\ul{\mu}}\ul{h_\epsilon}'(t) C_* \int_{0}^1J(z)\int_{0}^z[w+M(w)]dwdz\\
=&  \frac{\mu}{\ul{\mu}}\ul{h_\epsilon}'(t)   \left[\frac 12 + C_*\int_{0}^1J(z)\int_{0}^z M(w)dwdz\right]\\
\geq & \ul{h_\epsilon}'(t)
\end{align*}
provided that
\[
\underline \mu\leq \mu\left[\frac 12 +C_*\int_{0}^1J(z)\int_{0}^z M(w)dwdz\right].
\]

Analogously, 
\begin{align*}
&\mu\frac{C_*}{\epsilon^{2}} \int_{\ul{g_\epsilon}(t)}^{\ul{h_\epsilon}(t) }\int_{-\infty}^{\ul{g_\epsilon}(t)}J_{{\epsilon}}(x-y)v_\epsilon(t,x)\rd y\rd x\\
\geq &\frac{\mu}{\ul{\mu}}|\ul{g_\epsilon}'(t)| C_* \int_{0}^1\int_{w}^1J(z)[w+N(w)]dzdw\\
=&  \frac{\mu}{\ul{\mu}}|\ul{g_\epsilon}'(t)|   \left[\frac 12 + C_*\int_{0}^1J(z)\int_{0}^z N(w)dwdz\right]\\
\geq & -\ul{g_\epsilon}'(t)
\end{align*}
provided that
\[
\underline \mu\leq \mu\left[\frac 12 +C_*\int_{0}^1J(z)\int_{0}^z N(w)dwdz\right].
\]

 For $0<\epsilon\ll1$, $t\in (0, T_0]$ and $x\in [\ul{g_\epsilon}(t), \ul{h_\epsilon}(t)]$, we further have
\bes\label{v-ep}
\begin{aligned}
&d\frac{C_*}{\epsilon^2}\!\! \lf[\int_{\ul{g_\epsilon}(t)}^{\ul{h_\epsilon}(t)}\hspace{-.2cm}J_\epsilon(x-y)v_\epsilon (t,y)\rd y-v_\epsilon(t,x)\rr] \\
&=d\frac{C_*}{\epsilon^2}\!\! \lf[\int_{\ul{g_\epsilon}(t)-\epsilon}^{\ul{h_\epsilon}(t)+\epsilon}\hspace{-.2cm}J_\epsilon(x-y)\hat v_\epsilon(t,y)\rd y-\hat v_\epsilon(t,x)\rr]-L_\epsilon(t,x)-R_\epsilon(t,x),
\end{aligned}
\ees
where
\begin{align*}
\dd L_\epsilon(t,x):=\ &d\frac{C_*}{\epsilon^2}\!\! \left\{\int_{\ul{g_\epsilon}(t)-\epsilon}^{\ul{g_\epsilon}(t)}\hspace{-.4cm}J_\epsilon(x-y)\hat v_\epsilon(t,y)\rd y\right.\\
& \ \ \ \ \ \ \ \ \ \left.+\epsilon \frac{\ul{g_\epsilon}'(t)}{\ul\mu}\Big[\!\int_{\ul{g_\epsilon}(t)}^{\ul{g_\epsilon}(t)+\epsilon}\hspace{-.5cm}J_\epsilon(x-y)N\Big(\frac{y-\ul{g_\epsilon}(t)}{\epsilon}\Big)dy-N\Big(\frac{x-\ul{g_\epsilon}(t)}{\epsilon}\Big)\!\Big]\!\right\},
\end{align*}
\begin{align*}
\dd R_\epsilon(t,x):=\ & d\frac{C_*}{\epsilon^2}\!\! \left\{\int^{\ul{h_\epsilon}(t)+\epsilon}_{\ul{h_\epsilon}(t)}\hspace{-.5cm}J_\epsilon(x-y)\hat v_\epsilon(t,y)\rd y\right.\\
& \ \ \ \ \ \ \ \ \ \left. -\epsilon \frac{\ul{h_\epsilon}'(t)}{\ul\mu}\Big[\!\int^{\ul{h_\epsilon}(t)}_{\ul{h_\epsilon}(t)-\epsilon}\hspace{-.5cm}J_\epsilon(x-y)M\Big(\frac{\ul{h_\epsilon}(t)-y}{\epsilon}\Big)dy-M\Big(\frac{\ul{h_\epsilon}(t)-x}{\epsilon}\Big)\!\Big]\!\right\}.
\end{align*}
From ${\rm spt}(J_\epsilon)\subset [-\epsilon, \epsilon]$ and ${\rm spt}(M),\; {\rm spt}(N)\subset [0,1]$, we see immediately that
\[
L_\epsilon(t,x)=0 \mbox{ for } x\not\in [\ul{g_\epsilon}(t), \ul{g_\epsilon}(t)+2\epsilon],\; R_\epsilon(t,x)= 0 \mbox{ for } x\not\in [\ul{h_\epsilon}(t)-2\epsilon, \ul{h_\epsilon}(t)],
\]
and due to $(\hat v_\epsilon)_x(t, \ul{h_\epsilon}(t))<0<(\hat v_\epsilon)_x(t, \ul{g_\epsilon} (t))$, that
\[
L_\epsilon(t,x)\leq 0 \mbox{ for } x\in [\ul{g_\epsilon} (t)+\epsilon, \ul{g_\epsilon}(t)+2\epsilon],\; R_\epsilon(t,x)\leq 0 \mbox{ for } x\in [\ul{h_\epsilon}(t)-2\epsilon, 
\ul{h_\epsilon}(t)-\epsilon].
\]
For $x\in  [\ul{h_\epsilon}(t)-\epsilon, \ul{h_\epsilon}(t)]$, letting
\[ z=\frac{y-\ul{h_\epsilon}(t)}{\epsilon} \mbox{ and } w=\frac{\ul{h_\epsilon}(t)-x}{\epsilon}\in [0,1],
\]
 we obtain
\begin{align*}
R_\epsilon(t,x)\!\!
=&d\frac{C_*}{\epsilon^2}\!\!  \left\{\int_0^{1}J(w+z)\hat v_\epsilon(t,\ul{h_\epsilon}+\epsilon z)d z-\epsilon\frac{\ul{h_\epsilon}'(t)}{\ul\mu}\left[\int_{-1}^0 J(w+z)M(-z)dz-M(w)\right]\right\}
\\
=&d\frac{C_*}{\epsilon^2}\!\!  \left\{\int_0^{1}\!\!\!J(w+z)\left[(\hat v_\epsilon)_x(t,\ul{h_\epsilon})\epsilon z +O(\epsilon^2z^2)\right] dz-\epsilon\frac{\ul{h_\epsilon}'(t)}{\ul\mu}\left[\int_{-1}^0\!\!\! J(w+z)M(-z)dz-M(w)\right]\right\}
\\
\leq &d\frac{C_*\ul{h_\epsilon}'(t)}{\epsilon\ul\mu}\!\!  \left\{-\int_0^{1}J(w+z)zdz -\int_0^1 J(w-z)M(z)dz+M(w)\right\}\\
&\;\; -\frac{dC_*}{\ul\mu}\int_0^1 J(w+z)zdz \left[\epsilon^{\gamma_1-1}+O(1)\right]
\\
\leq &d\frac{C_*\ul{h_\epsilon}'(t)}{\epsilon\ul\mu}\!\!  \left\{-\int_0^1\Big[J(w+z)z+J(w-z)M(z)\Big]dz +M(w)\right\}\\
&\;\; -\epsilon^{\gamma_1-1}\frac{dC_*}{2\ul\mu}\int_0^1 J(w+z)zdz.
\end{align*}

We now choose $M$ such that 

 \begin{equation}\label{M-condition}
\dd \int_0^1\Big[J(w-z)M(z)+J(w+z)z\Big]dz \geq  M(w) \mbox{ for } w\in [0, 1].
 \end{equation}
 This can be easily achieved. Indeed,
 for $w\in [0,1]$, write 
 \[
 F(w):= \int_0^1J(w+z)zdz.
 \]
 Then
 \[
 F(w)=\int_w^{1+w}J(\xi)(\xi-w)d\xi=\int_w^1 J(\xi)(\xi-w)d\xi,\]
 \[
  F(1)=0,\; F(0)=\int_0^1 J(\xi)\xi d\xi\in (0, \frac 12),
 \]
 and
 \[
 F'(w)=-\int_w^1J(\xi)d\xi\leq 0,\; F'(0)=-\frac 12, \; F'(1)=0,\; F''(w)=J(w)\geq 0 \mbox{ for } w\in [0,1].
 \]
 It follows that
 \[
 F(0)-\frac 12 w\leq F(w)<\frac 12 (1-w) \mbox{ for } w\in [0,1).
 \]
 Therefore we can find $M$ satisfying, apart from the earlier requirements,  that
 \[\mbox{
  $M(w)\leq F(w)$ in $[0,1]$, $M(w)=F(w)$ for $w\in [F(0), 1]$.}
  \]
 
 Fix such an $M$; then clearly \eqref{M-condition} holds and we have
 \[
 R_\epsilon(t,x)\leq -\epsilon^{\gamma_1-1}\frac{dC_*}{2\ul\mu}F(w)
 \]
 for $0<\epsilon\ll 1$ and $w=\frac{\ul{h_\epsilon}(t)-x}{\epsilon}\in [0,1]$.
 
  Similarly,
  \begin{align*}
L_\epsilon(t,x)
\leq &\; d\frac{C_*|\ul{g_\epsilon}'(t)|}{\epsilon\mu}\!\!  \left\{-\int_0^1\Big[J(w+z)z+J(w-z)N(z)\Big]dz +N(w)\right\}\\
&\; -\epsilon^{\gamma_1-1}\frac{dC_*}{2\ul\mu}\int_0^1 J(w+z)zdz.
\end{align*}
So if we take $N(w)=M(w)$, then
 \[
 L_\epsilon(t,x)\leq -\epsilon^{\gamma_1-1}\frac{dC_*}{2\ul\mu}F(w)
 \]
 for $0<\epsilon\ll 1$ and $w=\frac{x-\ul{g_\epsilon}(t)}{\epsilon}\in [0,1]$.

With $M=N$ defined as above, we now take
\[
\ul\mu:=\mu\left[\frac 12 +C_*\int_{0}^1J(z)\int_{0}^z M(w)dwdz\right].
\]
Then
\[
\begin{cases}
\dd\mu\frac{C_*}{\epsilon^{2}} \int_{\ul{g_\epsilon}(t)}^{\ul{h_\epsilon}(t) }\int_{\ul{h_\epsilon}(t)}^{\yy}J_{{\epsilon}}(x-y)v_\epsilon(t,x)\rd y\rd x\geq  \ul{h_\epsilon}'(t),\\
\dd \mu\frac{C_*}{\epsilon^{2}} \int_{\ul{g_\epsilon}(t)}^{\ul{h_\epsilon}(t) }\int_{-\infty}^{\ul{g_\epsilon}(t)}J_{{\epsilon}}(x-y)v_\epsilon(t,x)\rd y\rd x\geq  -\ul{g_\epsilon}'(t),
\end{cases} 
\]
for $0<\epsilon\ll 1$ and $t\in [0, T_0]$.

By Steps 4 and 5 in the proof of Lemma 3.3, there exists $c_0>0$ such that
\begin{equation}\label{hat-v}
\partial_t \hat v_\epsilon\leq d\frac{C_*}{\epsilon^2}\!\! \lf[\int_{\ul{g_\epsilon}(t)-\epsilon}^{\ul{h_\epsilon}(t)+\epsilon}\hspace{-.2cm}J_\epsilon(x-y)\hat v_\epsilon(t,y)\rd y-\hat v_\epsilon(t,x)\rr] +f(t,x,\hat v_\epsilon)-c_0\epsilon^{\gamma_1}
\end{equation}
for $0<\epsilon\ll 1$, $t\in [0, T_0]$ and $x\in [\ul{g_\epsilon}(t), \ul{h_\epsilon}(t)]$.

We show next that
\bes\label{v-ep-req}
\partial_t  v_\epsilon\leq d\frac{C_*}{\epsilon^2}\!\! \lf[\int_{\ul{g_\epsilon}(t)}^{\ul{h_\epsilon}(t)}\hspace{-.2cm}J_\epsilon(x-y) v_\epsilon(t,y)\rd y-v_\epsilon(t,x)\rr] +f(t,x,v_\epsilon)
\ees
for $0<\epsilon\ll 1$, $t\in [0, T_0]$ and $x\in [\ul{g_\epsilon}(t), \ul{h_\epsilon}(t)]$.

When $x\in [\ul{g_\epsilon}(t)+\epsilon, \ul{h_\epsilon}(t)-\epsilon]$, we have $v_\epsilon(t,x)=\hat v_\epsilon(t,x)$, and by \eqref{v-ep}, \eqref{hat-v}, we obtain
\begin{align*}
&d\frac{C_*}{\epsilon^2}\!\! \lf[\int_{\ul{g_\epsilon}(t)}^{\ul{h_\epsilon}(t)}\hspace{-.2cm}J_\epsilon(x-y)v_\epsilon (t,y)\rd y-v_\epsilon(t,x)\rr] \\
&\geq d\frac{C_*}{\epsilon^2}\!\! \lf[\int_{\ul{g_\epsilon}(t)-\epsilon}^{\ul{h_\epsilon}(t)+\epsilon}\hspace{-.2cm}J_\epsilon(x-y)\hat v_\epsilon(t,y)\rd y-\hat v_\epsilon(t,x)\rr]
\\
&\geq \partial v_\epsilon-f(t,x,v_\epsilon),
\end{align*}
as we wanted.

For $x\in [\ul{g_\epsilon}(t), \ul{g_\epsilon}(t)+\epsilon]$,
by \eqref{v-ep} and \eqref{hat-v}, we obtain
\bes\label{5.6}
\begin{aligned}
&d\frac{C_*}{\epsilon^2}\!\! \lf[\int_{\ul{g_\epsilon}(t)}^{\ul{h_\epsilon}(t)}\hspace{-.2cm}J_\epsilon(x-y)v_\epsilon (t,y)\rd y-v_\epsilon(t,x)\rr] \\
&\geq \partial \hat v_\epsilon-f(t,x,\hat v_\epsilon)+c_0\epsilon^{\gamma_1}+\epsilon^{\gamma_1-1}\frac{dC_*}{2\ul\mu}F(\frac{x-\ul{g_\epsilon}(t)}{\epsilon}),
\end{aligned}
\ees
and by the Lipschitz continuity of $f$ we also have
\[
-f(t,x,\hat v_\epsilon)\geq -f(t,x,v_\epsilon)-O(\epsilon).
\]
Moreover, due to our choice of $M$, 
\begin{align*}
\partial_t \hat v_\epsilon=&\ \partial_t v_\epsilon+\epsilon M(\frac{x-\ul{g_\epsilon}(t)}{\epsilon})\frac{\ul{g_\epsilon}''(t)}{\ul\mu}-M'(\frac{x-\ul{g_\epsilon}(t)}{\epsilon})\frac{[\ul{g_\epsilon}(t)]^2}{\ul\mu}\\
&\ \geq \begin{cases} \partial_t v_\epsilon-O(\epsilon) & \mbox{ for } x\in [\ul{g_\epsilon}(t)+\epsilon F(0),\  \ul{g_\epsilon}(t)+\epsilon],\\
\partial_t v_\epsilon -O(1) & \mbox{ for } x\in [\ul{g_\epsilon}(t), \ \ul{g_\epsilon}(t)+\epsilon F(0)],
\end{cases}
\end{align*}
and
\[
\epsilon^{\gamma_1-1}\frac{dC_*}{2\ul\mu}F(\frac{x-\ul{g_\epsilon}(t)}{\epsilon})\geq c_1\epsilon^{\gamma_1-1} \mbox{ for } x\in [\ul{g_\epsilon}(t), \ \ul{g_\epsilon}(t)+\epsilon F(0)],
\]
where
\[
c_1:=\frac{dC_*}{2\ul\mu} \min_{w\in [0, F(0)]} F(w)>0.
\]
Substituting these estimates to \eqref{5.6}, we obtain
\[
d\frac{C_*}{\epsilon^2}\!\! \lf[\int_{\ul{g_\epsilon}(t)}^{\ul{h_\epsilon}(t)}\hspace{-.2cm}J_\epsilon(x-y)v_\epsilon (t,y)\rd y-v_\epsilon(t,x)\rr]
\geq \partial_t v_\epsilon-f(t,x,v_\epsilon)
\]
for $0<\epsilon\ll 1$, $t\in [0, T_0]$ and $x\in [\ul{g_\epsilon}(t), \ul{g_\epsilon}(t)+\epsilon]$.
Thus \eqref{v-ep-req} holds in this range of the variables. For $x\in [\ul{h_\epsilon}(t)-\epsilon, \ul{h_\epsilon}(t)]$, the proof is parallel and we omit the details.
Therefore \eqref{v-ep-req} holds for $0<\epsilon\ll 1$, $t\in [0, T_0]$ and $x\in [\ul{g_\epsilon}(t), \ul{h_\epsilon}(t)]$, as desired.

Since $0<v_\epsilon(0,x)\leq v_0(x)$ for $0<\epsilon\ll 1$ and $x\in (-h_0, h_0)$, we may now use the comparison principle to conclude that
\bes\label{5.8}
u_\epsilon(t,x)\geq v_\epsilon(t,x),\; h_\epsilon(t)\geq \ul{h_\epsilon}(t),\; g_\epsilon(t)\leq \ul{g_\epsilon}(t)  \mbox{ for } t\in [0, T_0],\; x\in [\ul{g_\epsilon}(t), \ul{h_\epsilon}(t)].
\ees
By Theorem 2.4, we have $(\ul{v_\epsilon},\ul{g_\epsilon},\ul{h_\epsilon})\to (\ul v, \ul g, \ul h)$ uniformly as $\epsilon\to 0$, and hence
$v_\epsilon\to \ul v$ uniformly as $\epsilon\to 0$. The required estimates now follow directly by letting $\epsilon\to 0$ in \eqref{5.8}.
 \end{proof}
 \begin{remark}\begin{itemize}{\rm
 \item[(i)]Note that if for some kernel function $J$ we can find a function $M$ satisfying \eqref{M-condition} and
 \[
 \int_{0}^1J(z)\int_{0}^z M(w)dwdz>\frac 12 C_*^{-1}=\frac 12 \int_0^1J(z) z^2dz,
 \]
 then from the above proof we see that $\ul\mu>\mu$, and therefore the conclusion in Proposition 5.2 implies that $(u_\epsilon, g_\epsilon, h_\epsilon)$ cannot  converge to $(v,g,h)$, the unique solution of \eqref{f2}, uniformly as $\epsilon\to 0$, since $\ul \mu>\mu$ implies $\ul{v}>v$. However, we have not been able to find such a pair $(J, M)$ so far, though we suspect such a pair exists.
 \item[(ii)] It is also possible to obtain an estimate for $(u_\epsilon, g_\epsilon, h_\epsilon)$ in the form
 \[
\limsup_{\epsilon\to 0} u_\epsilon(t,x)\leq \overline v(t,x),\; \limsup_{\epsilon\to 0} h_\epsilon(t)\leq \overline h(t),\; \liminf_{\epsilon\to 0} g_\epsilon(t)\geq \overline
g (t)
\]
uniformly for $x\in \R$, $t\in [0, T]$ with every $T>0$, where $(\overline v(t,x), \overline g (t),\overline h(t))$ denotes the unique solution of \eqref{f2} with $\mu=\overline \mu$.}
\end{itemize}
 \end{remark}

\end{document}